\documentclass[11pt]{article}

\usepackage{amsmath,amssymb,amsthm,amsfonts}
\usepackage{cite,color,enumerate}
\usepackage{geometry}

\newcommand{\eps}{\varepsilon}

\newtheorem{lemma}{Lemma}[section]
\newtheorem{prop}[lemma]{Proposition}
\newtheorem{defn}[lemma]{Definition}
\newtheorem{cor}[lemma]{Corollary}
\newtheorem{thm}{Theorem}
\newtheorem{rmk}[lemma]{Remark}

\numberwithin{equation}{section}

\begin{document}

\title{\bf Threshold phenomena for symmetric-decreasing radial
  solutions of reaction-diffusion equations}

\author{C. B. Muratov\thanks{Department of Mathematical Sciences, New
    Jersey Institute of Technology, Newark, NJ 07102, USA. }  \and
  X. Zhong$^{*,}$\thanks{The author is deceased.}}

\maketitle

\begin{abstract}
  We study the long time behavior of positive solutions of the Cauchy
  problem for nonlinear reaction-diffusion equations in $\mathbb{R}^N$
  with bistable, ignition or monostable nonlinearities that exhibit
  threshold behavior. For $L^2$ initial data that are radial and
  non-increasing as a function of the distance to the origin, we
  characterize the ignition behavior in terms of the long time
  behavior of the energy associated with the solution. We then use
  this characterization to establish existence of a sharp threshold
  for monotone families of initial data in the considered class under
  various assumptions on the nonlinearities and spatial dimension. We
  also prove that for more general initial data that are sufficiently
  localized the solutions that exhibit ignition behavior propagate in
  all directions with the asymptotic speed equal to that of the unique
  one-dimensional variational traveling wave.
\end{abstract}



\section{Introduction}

This paper is a continuation of our earlier work in \cite{mz:nodea13},
in which we considered a one-dimensional version of the Cauchy problem
for the reaction-diffusion equation
\begin{equation}\label{main}
  u_t = \Delta u +f(u), \qquad
  x \in \mathbb{R}^N, \; t>0,
\end{equation}
with initial condition
\begin{equation}\label{initial}
  u(x, 0)=\phi(x) \geq 0,
  \qquad \phi \in L^2(\mathbb{R}^N) \cap L^{\infty}(\mathbb{R}^N).
\end{equation}
Here $u = u(x, t) \in [0, \infty)$, and the nonlinearity $f$ is of
monostable, ignition or bistable type (for a review, see, e.g.,
\cite{xin00}). For all three nonlinearity types, $f$ satisfies
\begin{equation}
 \label{bistable}
 f \in C^1[0, \infty), \qquad f(0)=f(\theta_0)=f(1)=0, \qquad
 f(u)\left\{\!\!\! 
   \begin{array}{ll} 
     \leq 0, & in \; [0,\theta_0] \cup(1,\infty),\\
     >0, & in \; (\theta_0,1), 
   \end{array}\right. 
\end{equation} for
some $\theta_0\in[0,1)$. This type
of problems appears in various applications in physics, chemistry and
biology \cite{merzhanov99,murray,mikhailov,ko:book}. As a prototypical
nonlinearity, one may 
consider
\begin{align}
  \label{fnagumo}
  f(u) = u ( 1 - u ) (u - \theta_0),
\end{align}
which gives rise to what is sometimes called Nagumo's equation
\cite{nagumo,mckean70} and is also a particular version of the
Allen-Cahn equation \cite{allen79}. Moreover, in the case when
$\theta_0 > 0$ we assume that the $u = 1$ equilibrium is more
energetically favorable than the $u = 0$ equilibrium, i.e., that
\begin{equation}
  \label{negpotential}
  \int_{0}^{1}f(s) \, ds > 0.
\end{equation}
For the nonlinearity $f$ from \eqref{fnagumo}, the condition in
\eqref{negpotential} corresponds to $\theta_0 < \frac12$. Note that
such nonlinearities are often known to exhibit {\em ground states},
i.e., positive variational solutions of (for a precise definition used
in our paper, see Definition \ref{d:gs})
\begin{equation}\label{stationary}
  \Delta v +f(v)=0, \qquad x \in \mathbb R^N.
\end{equation} 

For the problem with $N = 1$ and unbalanced bistable nonlinearities,
i.e., the nonlinearities satisfying \eqref{bistable} and
\eqref{negpotential} for which $\theta_0 > 0$ and $f(u) < 0$ for all
$u \in (0, \theta_0)$, we proved, under some mild non-degeneracy
assumption for the nonlinearity $f$ near zero, that there are exactly
three alternatives for the long-time behavior of solutions of
\eqref{main} with symmetric-decreasing initial data satisfying
\eqref{initial} \cite{mz:nodea13}:
\begin{enumerate}[~ --]
\item ignition, when the solution converges locally uniformly to the
  equilibrium $u = 1$; 
\item extinction, when the solution converges uniformly to the
  equilibrium $u = 0$;
\item convergence to the unique ground state $v$ centered at the
  origin.
\end{enumerate}
The solution corresponding to the third alternative serves as a kind
of separatrix between the extinction and the ignition behaviors for
monotone families of initial data and may be referred to as the {\em
  threshold} solution. Moreover, this solution exhibits a {\em sharp}
threshold behavior, in the sense that for any strictly increasing
family of initial data exhibiting extinction for sufficiently small
values of the parameter and ignition for sufficiently large values of
the parameter there is exactly one member of the family that gives
rise to a threshold solution. Similar results were also obtained for
the case of monostable and ignition nonlinearities \cite{mz:nodea13}.

We note that studies of the long time behavior of solutions of
\eqref{main} go back to the classical work of Fife \cite{fife79arma},
in which all possible long-time behaviors of solutions of \eqref{main}
in one space dimension were classified for a general class of initial
data for nonlinearities like the one in \eqref{fnagumo} (for related
studies, see also \cite{F1997,FP1997,fasangova98}). Studies of the
threshold behavior go back to Kanel' \cite{kanel62}, and more recently
to those by Zlato\v{s} \cite{zlatos06}, Du and Matano \cite{du10} and
Pol\'{a}\v{c}ik \cite{P2011}, who established sharpness of the
threshold in a number of general settings. In particular, for $N = 1$
and bistable nonlinearities Du and Matano proved that one of the three
alternatives stated earlier holds for arbitrary bounded, compactly
supported initial data, provided that the ground state $v$ is suitably
translated. Among other things, for $N > 1$ and bistable
nonlinearities with $f'(0) < 0$ Pol\'{a}\v{c}ik showed, still for
compactly supported initial data, the existence of a sharp threshold
and that the threshold solution becomes asymptotically radial and
symmetric-decreasing relative to some point $x^* \in \mathbb R^N$ as
$t \to \infty$. Combining this result with those of \cite{busca02}
(see also \cite{galaktionov07} for a related work), one can further
conclude that the threshold solution converges to a ground state. We
note that in the considered situation the case of non-symmetric
initial data that do not have a sufficiently fast (exponential) decay
at infinity remains open, even in one space dimension.

For $N \geq 2$, the problem of classifying the long time behaviors for
solutions of \eqref{main} with nonlinearities as in \eqref{fnagumo}
was treated by Jones \cite{J1983}. For radial non-increasing initial
data with values in the unit interval and crossing the threshold value
of $u = \theta_0$, Jones used dynamical systems arguments to prove
that the $\omega$-limit set of each solution consists only of the
stable homogeneous equilibria $u = 0$ and $u = 1$, and of ground
states. Under an extra assumption that the set of all ground states is
discrete, Jones' analysis shows that any solution of the initial value
problem considered in \cite{J1983} converges either uniformly to
$u = 0$, or locally uniformly to $u = 1$, or uniformly to one of the
ground states as $t \to \infty$ (however, for existence of
non-convergent solutions in a related context, see
\cite{polacik96}). Alternatively, convergence to a ground state as the
third alternative follows from the results of \cite{busca02} for
exponentially decaying initial data (the latter assumption is dropped
in a recent work \cite{foldes11}). We note that in contrast to the
$N = 1$ case, in higher dimensions it is generally not known whether
or not \eqref{stationary} may exhibit continuous families of ground
states, even for non-degenerate bistable nonlinearities (for examples
of nonlinearities exhibiting arbitrarily large numbers of distinct
ground states, see \cite{bamon00}). Some general sufficient conditions
establishing the absence of multiplicity of the ground states were
provided by Serrin and Tang \cite{serrin00} (existence of such
solutions under very general assumptions on $f$ goes back to the
classical works of Berestycki and Lions \cite{BL1983} and of
Berestycki, Lions and Peletier \cite{berestycki81}; the possibility of
multiple ground states for $N \geq 2$ and nonlinearities having zero
as a locally stable equilibrium was pointed out in
\cite{polacik01}). In particular, the results of Serrin and Tang apply
to the nonlinearity in \eqref{fnagumo}, thus establishing the expected
multiplicity of the long time behaviors for Nagumo's equation for
radial symmetric-decreasing data in all dimensions, with the {\em
  unique} ground state as the limit of the threshold solution. Another
example of a bistable nonlinearity to which the uniqueness result in
\cite{serrin00} applies is
\begin{align}
  \label{eq:fstdeg}
  f(u) = -u^r + (1 + \gamma) u^p - \gamma u^q, \qquad 1 < r < p < q,
  \qquad \gamma > {(p - r) (q + 1) \over (q - p) (r + 1)},
\end{align}
which satisfies \eqref{bistable} and \eqref{negpotential}.

At the same time, for monostable nonlinearities such a conclusion
about the ground state multiplicity is easily seen to be false. For
example, if $N \geq 3$ and $f(u) = u^p$ for $u \leq \frac12$, with
$p = p_S$, where $p_S := (N + 2) / (N - 2)$ is the Sobolev critical
exponent (here and in the rest of the paper, we use the notations of
\cite{QS2007} for the critical exponents; for the reader's
convenience, the values of the relevant critical exponents are
collected in Table \ref{t:exps}), one gets a continuous family of
ground states
\begin{align}
  \label{eq:vEF}
  v_\lambda(x) := \left( \lambda  + {|x|^2 \over \lambda N (N -
  2)} \right)^{-(N-2)/2},
\end{align}
for all $\lambda \in [2^{2/(N-2)}, \infty)$. Here $v_\lambda$ are the
unique, up to translations, ground states such that
$\|v_\lambda \|_{L^\infty(\mathbb R^N)} \leq \frac12$
\cite{caffarelli89}.  Very recently, Pol\'{a}\v{c}ik and Yanagida
showed that the $\omega$-limit sets for such problems may not consist
only of stationary solutions, even in the radial case
\cite{polacik14}. Therefore, the long time behavior of solutions is
expected to be more delicate in the case of monostable nonlinearities.

In this paper, we revisit the problem of threshold behavior for radial
symmetric-decreasing solutions of \eqref{main} in dimensions $N > 1$
whose studies were initiated by Jones for bistable nonlinearities. Our
main contribution in the latter case is to remove the strong
non-degeneracy assumptions of \cite{J1983,P2011}, which read
$f'(0) < 0$ and $f'(1) < 0$ in the context of the nonlinearities
considered in this paper, and to establish the picture of sharp
threshold behavior for radial symmetric-decreasing $L^2$ initial data,
under a generic assumption on the structure of the set of all ground
states. Note that our results are new even in the case $N = 1$, since,
in contrast with \cite{mz:nodea13}, we do not impose any
non-degeneracy assumptions on $f$ any more, at the expense of not
being able to determine precisely the limit energy of the threshold
solution. In addition, to the best of our knowledge this is a first
general study of threshold phenomena for ignition and monostable
nonlinearities for $N > 1$. In particular, we show that the character
of the threshold behavior depends rather delicately on the dimension
of space and may become quite intricate for $N \geq 3$.


Our paper is organized as follows. In Sec. \ref{s:state}, we discuss
the motivations for our results and present the precise statements in
Theorems \ref{t:radfront}--\ref{t:mono31}. In Sec. \ref{s:prelim}, we
present a number of auxiliary results. In Sec. \ref{s:prop}, we prove
Theorems \ref{t:radfront}--\ref{t:omls} that are concerned with
ignition and propagation phenomena. In Sec. \ref{s:bistab}, we prove
Theorem \ref{t:thrbist} that treats bistable nonlinearities.  In
Sec. \ref{s:igni}, we prove Theorems \ref{t:thrign2} and
\ref{t:thrign3} dealing with ignition nonlinearities. Finally, in
Sec. \ref{s:mono} we prove Theorems \ref{t:mono2}--\ref{t:mono31}
treating monostable nonlinearities.

\begin{table}
  \centering
  \caption{List of critical exponents.}
  \vspace{3mm}
  \begin{tabular}{|c|c|c|c|}
    \hline
    Name & Exponent & Validity & Value for $N = 3$ \\
    \hline \hline
    Fujita & $p_F = (N + 2)/N$ & $N \geq 1$ & $5/3$  \\
    Serrin & $p_{sg} = N / (N - 2)$ & $N \geq 3$ & 3 \\
    Sobolev & $p_S = (N + 2) / (N - 2)$ & $N \geq 3$ & 5 \\
    Joseph-Lundgren & $p_{JL} = 1 + 4/ \left( N - 4 - 2 \sqrt{N - 1} \,
                      \right)$ & $N \geq 11$ & -- \\
    \hline
 \end{tabular}
\label{t:exps}
\end{table}

\section{Statement of results}
\label{s:state}

Our approach to the problem takes advantage of two variational
structures possessed by \eqref{main}. The first one is well known and
expresses the fact that \eqref{main} is an $L^2$ gradient flow
generated by the energy (for justification of this and the following
statements, see the next section)
\begin{align}
  \label{eq:E}
  E[u] := \int_{\mathbb R^N} \left( \frac12 |\nabla u|^2 + V(u)
  \right) dx, \qquad V(u) := -\int_0^u f(s) \, ds,
\end{align}
which is well-defined for all
$u \in H^1(\mathbb R^N) \cap L^\infty(\mathbb R^N)$.  In particular,
this implies the energy dissipation identity for solutions $u(x, t)$
of \eqref{main} that belong to
$H^2(\mathbb R^N) \cap L^\infty(\mathbb R^N)$ for each $t > 0$:
\begin{align}
  \label{eq:dEdt}
  {d E[u(\cdot, t)] \over dt} = -\int_{\mathbb R^N} u_t^2(x, t) \, dx,
\end{align}
and, therefore, the energy evaluated on solutions of \eqref{main} is
non-increasing in time. Yet, in contrast to problems on bounded
domains, $E$ does not serve as a Lyapunov functional for \eqref{main},
since it is not bounded from below a priori.

From \eqref{eq:dEdt}, one easily deduces that whenever
$\lim_{t \to \infty} E[u(\cdot, t)] \not= -\infty$, the $\omega$-limit
set of $u(x, t)$ may consist only of stationary solutions of
\eqref{main}. Indeed, in this case there exists a sequence of
$t_n \in [n, n+1)$ such that $u_t(\cdot, t_n) \to 0$ in
$L^2(\mathbb R^N)$ as $n \to \infty$.  Therefore, multiplying
\eqref{main} by a test function $\varphi \in C^\infty_c(\mathbb R^N)$
and integrating, we can see from the obtained equation:
\begin{align}
  \label{eq:mainweak}
  \int_{\mathbb R^N} \varphi u_t \, dx = -\int_{\mathbb R^N} \left(
  \nabla u \cdot \nabla \varphi - f(u) \varphi \right) \, dx,
\end{align}
that if $u(\cdot, t_n)$ converges to some limit in
$H^1_{loc}(\mathbb R^N)$, that limit satisfies \eqref{stationary}
distributionally (hence also classically \cite{gilbarg}). In view of
the standard parabolic regularity, the latter is true, at least on a
subsequence of $t_{n_k} \to \infty$.  Furthermore, if this limit is
independent of the subsequence, then by the uniform in space H\"older
regularity of $u(x, \cdot)$ (see Proposition \ref{holder} below) the
obtained limit is a full limit as $t \to \infty$ locally
uniformly. Nevertheless, despite the energy $E[u(\cdot, t_n)]$ being
bounded from below in this situation for all $n$, we cannot yet
conclude that the obtained limit is a critical point of $E$, in the
sense that the limit has {\em finite} energy. In this paper, we refer
to those solutions of \eqref{stationary} that do as ground
states. More precisely, we have the following definition.

\begin{defn}
  \label{d:gs}
  We call $v \in C^2(\mathbb R^N)$ solving \eqref{stationary} a {\em
    ground state}, if $v > 0$, $v(x) \to 0$ as $|x| \to \infty$,
  $|\nabla v| \in L^2(\mathbb R^N)$ and $V(v) \in L^1(\mathbb R^N)$.
\end{defn}

One naturally expects that for a variety of initial data the solutions
of \eqref{main} go locally uniformly to the equilibrium $u = 1$, whose
energy under \eqref{negpotential} is formally equal to negative
infinity. The latter is intimately related to the phenomenon of {\em
  propagation}, whereby the solution at long times may look
asymptotically like a radially divergent front invading the $u = 0$
equilibrium by the $u = 1$ equilibrium with finite propagation speed,
even for non-radial initial data \cite{jones83,J1983,aronson78}.

To discern between different classes of long time limit behaviors of
solutions of \eqref{main}, it is useful to take advantage of a
different variational structure of \eqref{main} that was pointed out
in \cite{M2004}. In the case of radial solutions of \eqref{main}, we
may formulate this variational structure as follows. Let
$x = (y, z) \in \mathbb R^N$, where $y \in \mathbb R^{N-1}$ and
$z \in \mathbb R$ (this notation is used throughout the rest of the
paper). For a fixed $c > 0$, define
\begin{align}
  \label{eq:utilde}
  \tilde u(y, z, t) := u(y, z + ct, t),
\end{align}
which corresponds to $u(x, t)$ in the reference frame moving with
constant speed $c$ in the $z$-direction.  Then \eqref{main} written in
terms of $\tilde u$ takes the following form:
\begin{align}
  \label{mainc}
  \tilde u_t = \Delta \tilde u + c \tilde u_z + f(\tilde u).
\end{align}
This equation is a gradient flow in the exponentially weighted space
$L^2_c(\mathbb R^N)$, defined to be the completion of
$C^\infty_c(\mathbb R^N)$ with respect to the norm
\begin{align}
  \label{eq:L2cnorm}
  \| u \|_{L^2_c(\mathbb R^N)} := \left( \int_{\mathbb R^N} e^{cz}
  |u|^2 dx \right)^{1/2},
\end{align}
and is generated by the functional 
\begin{equation}\label{weighted}
  \Phi_c[u]:=\int_{\mathbb{R}^{N}}
  e^{cz} \left( \frac{1}{2}|\nabla u|^2+V(u) \right) dx,
\end{equation}
which is well-defined for all
$u \in H^1_c(\mathbb R^N) \cap L^\infty(\mathbb R^N)$, where
$H^1_c(\mathbb R^N)$ is the exponentially weighted Sobolev space
similarly obtained from $C^\infty_c(\mathbb R^N)$ via completion with
respect to the norm
\begin{align}
  \label{eq:H1cnorm}
  \| u \|_{H^1_c(\mathbb R^N)} := \left( \| u \|_{L^2_c(\mathbb
  R^N)}^2 + \| \nabla u \|_{L^2_c(\mathbb R^N)}^2 \right)^{1/2}.
\end{align}
Note that the space obtained in this way is a Hilbert space with the
naturally defined inner product. 

The above formulation captures {\em propagation} of solutions of
\eqref{main} \cite{M2004,mn:cms08}. Notice that in the radial context
we arbitrarily chose the last component of $x$ as the axis of
propagation. More generally, one can still use the above variational
structure to analyse propagation in an arbitrary direction in
$\mathbb R^N$ by rotating the initial condition appropriately. The
dissipation identity for the solutions of \eqref{mainc} that belong to
$H^2_c(\mathbb R^N)$ (the space of all functions in
$H^1_c(\mathbb R^N)$ whose first derivatives also belong to
$H^1_c(\mathbb R^N)$) takes the form:
\begin{align}
  \label{eq:dPhicdt}
  {d \Phi_c[\tilde u(\cdot, t)] \over dt} = - \int_{\mathbb R^N}
  e^{cz} \tilde u_t^2(\cdot, t) \, dx.
\end{align}
The constant $c > 0$ above is arbitrary and can be suitably chosen for
the purposes of the analysis. One particular value of $c$ is special,
however.

\begin{prop}
  \label{p:tw}
  Let $N = 1$ and let \eqref{bistable} hold with some
  $\theta_0 \in [0, 1)$. Also let $f'(0) = 0$ if $\theta_0 = 0$, or
  let \eqref{negpotential} hold if $\theta_0 > 0$. Then there exists a
  unique $c^\dag > 0$ and a unique
  $\bar u \in C^2(\mathbb R) \cap H^1_{c^\dag}(\mathbb R)$ such that
  $0 < \bar u < 1$, $\bar u' < 0$, $\bar u(+\infty) = 0$,
  $\bar u(-\infty) = 1$, $\bar u(0) = \frac12$, and $\bar u$ minimizes
  $\Phi_{c^\dag}$ over all $u \in H^1_{c^\dag}(\mathbb R)$ such that
  $0 \leq u \leq 1$. Furthermore, $u(x, t) = \bar u(x - c^\dag t)$
  solves \eqref{main}.
\end{prop}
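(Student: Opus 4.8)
The plan is to construct $\bar u$ and $c^\dag$ by a shooting argument for the profile ODE, extract $\Phi_{c^\dag}[\bar u]=0$ from the weighted energy identity, and then identify $c^\dag$ with the critical speed at which $\Phi_c$ stops being unbounded below, which yields the minimality. A function $\bar u$ with the asserted properties is exactly a monotone heteroclinic connection from $(1,0)$ to $(0,0)$ in the phase plane of $\bar u''+c\bar u'+f(\bar u)=0$ lying in $\{0<\bar u<1,\ \bar u'<0\}$. I would follow the trajectory $\Gamma_c$ that leaves $(1,0)$ into the quadrant $\{\bar u<1,\ \bar u'<0\}$; this branch is well defined and depends continuously and monotonically on $c>0$, even in the degenerate cases $f'(0)=0$ (ignition, or $\theta_0=0$) in which $(0,0)$ is not hyperbolic. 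Tracking $H:=\tfrac12(\bar u')^2-V(\bar u)$, which obeys $H'=-c(\bar u')^2\le 0$, one checks using \eqref{negpotential} (for $\theta_0>0$: the sign of $V(1)$) that for $c$ small $\Gamma_c$ crosses $\bar u=0$ with $\bar u'<0$ (``overshoot''), while for $c$ large $\Gamma_c$ fails to give the connection we want --- in the bistable/ignition cases it reaches $\bar u'=0$ at some $\bar u^\ast\in(0,\theta_0)$ and turns back (strong damping exhausts $H$ before $\bar u$ reaches the positive barrier region of $V$), and in the degenerate monostable case it reaches $(0,0)$ only along the slow branch, producing an algebraically decaying profile that is not in $H^1_c$. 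A continuity/monotonicity argument in $c$ then selects a unique $c^\dag>0$ admitting the desired exponentially decaying heteroclinic; monotonicity of $\bar u$, the strict bounds $0<\bar u<1$, regularity $\bar u\in C^2$ (from $f\in C^1$), and the normalization $\bar u(0)=\tfrac12$ come along automatically, and uniqueness of the heteroclinic up to translation gives uniqueness of $\bar u$ and of $c^\dag$.

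Next I would pin down the decay of $\bar u$ at $+\infty$ to place it in $H^1_{c^\dag}(\mathbb R)$. Linearizing at $(0,0)$ gives exponents $\tfrac12\bigl(-c^\dag\pm\sqrt{(c^\dag)^2-4f'(0)}\bigr)$ with $f'(0)\le 0$, and the selected profile approaches $0$ like $e^{-\kappa z}$ with $\kappa>c^\dag/2$ in all cases: the slow-stable rate when $f'(0)<0$, and $\kappa=c^\dag$ when $f'(0)=0$ (forced by $f\equiv 0$ near $0$ in the ignition case, by $f(u)=o(u)$ in the degenerate monostable case). Since $c^\dag-2\kappa<0$, the quantity $e^{c^\dag z}\bigl(\bar u^2+(\bar u')^2\bigr)$ is integrable near $+\infty$, while near $-\infty$ the weight $e^{c^\dag z}$ decays exponentially and $\bar u',V(\bar u)$ stay bounded; hence $\bar u\in H^1_{c^\dag}(\mathbb R)$ and $\Phi_{c^\dag}[\bar u]$ is finite. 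Multiplying $\bar u''+c^\dag\bar u'+f(\bar u)=0$ by $e^{c^\dag z}\bar u'$ and integrating over $\mathbb R$, all boundary terms vanish by this decay, leaving $\tfrac{c^\dag}{2}\int_{\mathbb R}e^{c^\dag z}(\bar u')^2\,dz+c^\dag\int_{\mathbb R}e^{c^\dag z}V(\bar u)\,dz=0$, i.e.\ $\Phi_{c^\dag}[\bar u]=0$. The same computation applied to \emph{any} stationary solution of \eqref{mainc} lying in $H^1_c$ shows its weighted energy vanishes --- a fact I use below.

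For the minimality, since $\Phi_{c^\dag}[0]=0=\Phi_{c^\dag}[\bar u]$, it suffices to show $\Phi_{c^\dag}[u]\ge 0$ for every $u\in H^1_{c^\dag}(\mathbb R)$ with $0\le u\le 1$, i.e.\ that $c^\dag$ is exactly the critical speed below which negative-energy competitors exist. For $c<c^\dag$ one exhibits admissible functions with arbitrarily negative $\Phi_c$ by gluing a long plateau at $u=1$ to a suitable descent profile: the weighted energy then scales like a negative constant times $e^{cL}\to-\infty$ as the plateau length $L\to\infty$, the constant being negative precisely because below $c^\dag$ the ``cost'' of the descent is outweighed by the ``gain'' $V(1)<0$ of the plateau. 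For $c=c^\dag$, suppose $\Phi_{c^\dag}[u_0]<0$ for some admissible $u_0$ and run the $L^2_{c^\dag}$-gradient flow \eqref{mainc} from $u_0$ (the comparison principle keeps the orbit in $[0,1]$). By \eqref{eq:dPhicdt}, $\Phi_{c^\dag}[\tilde u(\cdot,t)]$ is nonincreasing, hence $\le\Phi_{c^\dag}[u_0]<0$; if it is bounded below, the orbit is asymptotic to a stationary solution of \eqref{mainc} in $H^1_{c^\dag}$ with weighted energy $\le\Phi_{c^\dag}[u_0]<0$, contradicting the vanishing of that energy established above. That $\Phi_{c^\dag}[\tilde u(\cdot,t)]$ is bounded below along the orbit (equivalently, that no weighted energy escapes to $z=-\infty$) is exactly the statement that propagation with speed exceeding $c^\dag$ does not occur; granting it, $\Phi_{c^\dag}\ge 0$ on the admissible set and $\bar u$ is a minimizer. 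Finally, with $\bar u''+c^\dag\bar u'+f(\bar u)=0$ in hand, $u(x,t)=\bar u(x-c^\dag t)$ solves \eqref{main} by direct substitution.

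The main obstacle is twofold: the degenerate cases of the shooting argument and the lower-boundedness of $\Phi_{c^\dag}$ along the flow. When $f'(0)=0$ the equilibrium $(0,0)$ loses hyperbolicity, so the shooting map is no longer transverse and one must lean on the monotone dependence of $\Gamma_c$ on $c$ together with an essential use of the global hypotheses --- \eqref{negpotential} for $\theta_0>0$, and $f'(0)=0$ for $\theta_0=0$; without them the selected wave can fail to exist, fail to be unique, or leave $H^1_{c^\dag}$. And the lower bound on $\Phi_{c^\dag}$ along the gradient flow, which is what upgrades ``$\bar u$ is a critical point of zero weighted energy'' to ``$\bar u$ is a minimizer'', is precisely where the two variational structures of \eqref{main} --- the unweighted flow of $E$ and the weighted flow of $\Phi_c$ --- must be played against the ODE uniqueness of the front.
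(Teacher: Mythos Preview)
The paper does not prove this proposition; it simply records it as an immediate corollary to \cite[Proposition~2.3]{mz:nodea13}. In that reference the wave is obtained by the direct method: one minimizes $\Phi_c$ over $\{u\in H^1_c(\mathbb R):0\le u\le 1\}$, shows the infimum is $-\infty$ for small $c$ and $0$ for large $c$, and proves that at the threshold speed a nontrivial minimizer exists and solves the profile ODE. In that route the inequality $\inf\Phi_{c^\dag}\ge 0$ is the \emph{definition} of $c^\dag$, and the Euler--Lagrange equation identifies the minimizer with the front.

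Your approach inverts the logic---build $(\bar u,c^\dag)$ by shooting, then argue minimality---and this inversion creates a genuine gap that you flag but do not close. In your gradient-flow argument you need $\Phi_{c^\dag}[\tilde u(\cdot,t)]$ to stay bounded below along the orbit started from a hypothetical $u_0$ with $\Phi_{c^\dag}[u_0]<0$; you write ``granting it'' and identify this with ``propagation faster than $c^\dag$ does not occur.'' But the standard proof that the leading edge cannot outrun $c^\dag$ (e.g.\ the upper bound in Theorem~\ref{t:radfront}, via \cite[Proposition~5.2]{mn:cms08}) itself relies on $\Phi_c\ge 0$ for $c\ge c^\dag$, so the reasoning is circular. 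Moreover, even in the bounded-below branch, the step ``the orbit is asymptotic to a stationary solution of \eqref{mainc} in $H^1_{c^\dag}$ carrying the negative energy'' is not justified: the dissipation identity only yields $\tilde u_t(\cdot,t_n)\to 0$ in $L^2_{c^\dag}$, and nothing prevents the nontrivial part of $\tilde u(\cdot,t_n)$ from sliding to $z=-\infty$ where the weight vanishes, so the $H^1_{c^\dag}$-limit may well be $0$.

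A secondary issue: in the degenerate monostable case ($\theta_0=0$, $f'(0)=0$) the shooting dichotomy you sketch is not an overshoot/undershoot alternative. Since $(0,0)$ has eigenvalues $0$ and $-c$, monotone heteroclinics exist for \emph{every} $c\ge c^\dag$, and what singles out $c^\dag$ is that only there does the profile decay along the strong stable direction and hence lie in $H^1_c$. Establishing that selection via pure ODE arguments is delicate; the variational route in \cite{mz:nodea13} sidesteps it by characterizing $c^\dag$ through $\inf\Phi_c$ directly.
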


\noindent This proposition is an immediate corollary to
\cite[Proposition 2.3]{mz:nodea13}. The solution $u(x, t)$ in
Proposition \ref{p:tw} is an example of a {\em variational traveling
  wave} and plays an important role for the long time behavior of
solutions of \eqref{main} \cite{M2004,mn:cms08}. Its existence allows
us to make a very general conclusion about propagation of the trailing
and the leading edges of the solution with localized initial data. For
$\delta \in (0, 1)$, we define
\begin{align}
  R_\delta^+(t) & := \sup_{x \in \mathbb R^N} \{ |x| \ : \ u(x, t) >
                  \delta \},   \label{eq:Rdp} \\
  R_\delta^-(t) & := \inf_{x \in \mathbb R^N} \{ |x| \ : \ u(x, t) <
                  \delta \}. \label{eq:Rdm} 
\end{align}
The functions $R^\pm_\delta(t)$ represent, respectively, the positions
of the leading and the trailing edges of radially divergent solutions
at level $\delta$. Then we have the following result, which is a
consequence of the gradient flow structure generated by $\Phi_c$.

\begin{thm}[Propagation]
  \label{t:radfront}
  Let \eqref{bistable} hold with some $\theta_0 \in [0, 1)$, and let
  $f'(0) = 0$ if $\theta_0 = 0$, or let \eqref{negpotential} hold if
  $\theta_0 > 0$. Assume that $u(x, t)$ is a solution of \eqref{main}
  satisfying \eqref{initial} with $Q(\phi) \in L^2_c(\mathbb R^N)$ for
  some $c > c^\dag$ and every rotation $Q$, and that
  $u(\cdot, t) \to 1$ locally uniformly as $t \to \infty$. Then
  \begin{align}
    \label{eq:Rdinfty}
    \lim_{t \to \infty} {R^\pm_\delta(t) \over t} = c^\dag.
  \end{align}
\end{thm}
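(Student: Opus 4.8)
The plan is to establish the two matching one-sided bounds $\limsup_{t\to\infty} R^+_\delta(t)/t \le c^\dag$ and $\liminf_{t\to\infty} R^-_\delta(t)/t \ge c^\dag$; the two remaining inequalities in \eqref{eq:Rdinfty} are then automatic, since $u(x,t) < \delta$ for all $|x| \ge (c^\dag + \eps)t$ gives $R^-_\delta(t) \le (c^\dag + \eps)t$, while $u(x,t) > \delta$ on a sphere of radius $(c^\dag - \eps)t$ gives $R^+_\delta(t) \ge (c^\dag - \eps)t$.

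\emph{Upper bound.} This part does not use the hypothesis that $u(\cdot,t)\to 1$, only the localization of $\phi$, and its whole point is the exponentially weighted energy. First, $\phi \in L^2(\mathbb R^N)$ together with $Q(\phi) \in L^2_c(\mathbb R^N)$ forces $Q(\phi) \in L^2_{c'}(\mathbb R^N)$ for \emph{every} $c' \in (c^\dag, c]$ and every rotation $Q$: splitting the weighted integral by the sign of the last coordinate in the rotated frame, the half where $z \ge 0$ is controlled by the $L^2_c$ norm and the half where $z < 0$ by the $L^2$ norm. Fix such a $c'$ and a unit vector $e$, rotate coordinates so that $e$ is the $z$-axis, and set $\tilde u(y,z,t) := u(y, z + c't, t)$, which solves \eqref{mainc} with $c = c'$. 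By parabolic smoothing $\tilde u(\cdot,\tau_0) \in H^2_{c'}(\mathbb R^N)$ for $\tau_0 > 0$, so by \eqref{eq:dPhicdt} the function $\Phi_{c'}[\tilde u(\cdot,t)]$ is finite and non-increasing for $t \ge \tau_0$, and, since $c' > c^\dag$, it is bounded below — this is the variational content behind Proposition \ref{p:tw}, namely that $\Phi_{c'}$ is non-negative on $\{u \in H^1_{c'}(\mathbb R^N): 0 \le u \le 1\}$ for $c' \ge c^\dag$, with the only admissible stationary solution of \eqref{mainc} reachable as an $\omega$-limit being $u \equiv 0$ when $c' > c^\dag$; I would record this as a lemma in Section \ref{s:prelim}. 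Hence $\Phi_{c'}[\tilde u(\cdot,t)]$ converges, so $\int_{\tau_0}^\infty \int_{\mathbb R^N} e^{c'z}\tilde u_t^2\,dx\,dt < \infty$, and combined with interior parabolic estimates this yields that every subsequential $C^2_{loc}$-limit of $\tilde u(\cdot,t)$ as $t\to\infty$ is the zero function. To turn this into a bound on $R^+_\delta$ I would argue by contradiction: if along $t_n\to\infty$ there were points $x_n$ with $|x_n| \ge (c' + \eps)t_n$ and $u(x_n,t_n) \ge \delta$, put $e_n := x_n/|x_n|$, pass to a subsequence with $e_n \to e_*$, and translate, $w_n(x,s) := u(x_n + x, t_n + s)$; in the frame attached to $e_n$ and moving at speed $c'$ the center $x_n$ sits at a bounded shifted coordinate, so the dissipation bound above (applied with $e = e_n$) passes to the ancient limit $w_\infty$ of the $w_n$ and forces $w_\infty$ to be an admissible bounded stationary solution of \eqref{mainc} of finite weighted energy, hence $w_\infty \equiv 0$, contradicting $w_\infty(0,0) \ge \delta$. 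Letting $c' \downarrow c^\dag$ gives $\limsup_{t\to\infty} R^+_\delta(t)/t \le c^\dag$ (and hence $\limsup_{t\to\infty} R^-_\delta(t)/t \le c^\dag$ as well).

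\emph{Lower bound.} Here the hypothesis $u(\cdot,t)\to 1$ locally uniformly is essential: given $\eps > 0$ there are $T$ and $\rho$, with $\rho$ arbitrarily large, such that $u(\cdot,T) \ge 1-\eps$ on $B_\rho$. Out of the one-dimensional variational wave $\bar u$ of Proposition \ref{p:tw} I would build an expanding, compactly supported radial subsolution of \eqref{main} of Fife--McLeod type, $\underline w(x,t) := (\bar u(|x| - s(t)) - q(t))^+$ with $q(t)\downarrow 0$ and $s(t) = c^\dag t - \xi(t)$, where $\xi(t)$ grows like a constant times $\log t$ so as to dominate the mean-curvature term $\tfrac{N-1}{|x|}\bar u' < 0$ that appears when a radial profile is fed into \eqref{main}. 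Choosing $s(T)$ so small that $\operatorname{supp}\underline w(\cdot,T) \subset B_\rho$ and $q(T)\ge\eps$ makes $\underline w(\cdot,T) \le u(\cdot,T)$, and the comparison principle yields $u(x,t) \ge \underline w(x,t)$ for $t\ge T$; since $\bar u(\xi) > \bar u(0) = \tfrac12$ for $\xi \le 0$, this gives $u(x,t) > \delta$ throughout $\{|x| \le (c^\dag-\eps)t\}$ for $t$ large, so $R^-_\delta(t) \ge (c^\dag - \eps)t$, and $\eps\downarrow 0$ finishes.

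I expect the main difficulty to be the uniformity over directions in the upper-bound step: the weighted energy attached to the direction $e$ depends on $e$ and the hypothesis only asserts its finiteness for each fixed $e$, so the translation argument actually needs $\sup_{|e|=1}\Phi_{c'}[\tilde u^{(e)}(\cdot,\tau_0)] < \infty$. This should follow from the pointwise bound $u(\cdot,\tau_0) \le e^{C\tau_0}\,G_{\tau_0}*\phi$ and the companion gradient estimate — convolution with the Gaussian kernel $G_{\tau_0}$ is exactly what upgrades the directionwise $L^2_c$-control of $\phi$ to a bound on $u(\cdot,\tau_0)$ and $\nabla u(\cdot,\tau_0)$ that is uniform over $e$ at the slightly smaller rate $c'$ — but making this rigorous, and pinning down the precise rigidity statement for \eqref{mainc} at $c' > c^\dag$ that is needed (the delicate case being monostable $f$, where supercritical traveling waves exist, so one must in addition show that the limiting weighted energy is zero rather than merely finite), is where the real work lies. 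A secondary technical point is the construction of $\underline w$ when $f$ is allowed to degenerate at $0$ or $1$: the shifts $q(t)$ must then be chosen more carefully than in the classical non-degenerate setting, which is one of the places where the removal of the non-degeneracy hypotheses of \cite{J1983,P2011} costs extra effort.
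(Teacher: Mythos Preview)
Your overall two-bound strategy is right, and for the upper bound you are in the same territory as the paper, which simply cites \cite[Proposition 5.2]{mn:cms08} applied direction by direction (``by rotational symmetry''), using only that $u(\cdot,t)\in H^2(\mathbb R^N)\cap H^2_{c_0}(\mathbb R^N)$ for $t>0$. The compactness/ancient-solution route you sketch is not needed, and your description contains a slip: in the frame moving at speed $c'$ along $e_n$, the point $x_n$ has $z$-coordinate $|x_n|-c't_n\ge\eps t_n\to\infty$, not bounded. That divergence is in fact the mechanism --- the bounded weighted energy $\Phi_{c'}[\tilde u(\cdot,t)]$ forces the solution to be small at large $z$ in the moving frame --- so no rigidity statement for \eqref{mainc} is required.

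The genuine difference, and the genuine gap, is in the lower bound. The paper does \emph{not} build a Fife--McLeod subsolution. Instead, for each $c\in(0,c^\dag)$ it takes the one-dimensional minimizer $\bar u_c$ of $\Phi_c$ from \cite[Proposition 2.3]{mz:nodea13} (which for $c<c^\dag$ has compact support), sets $u_c^R(x):=\bar u_c(|x|-R)$, and shows by an explicit spherical-coordinates computation that $\Phi_c[u_c^R]<0$ once $R$ is large. Since $u(\cdot,t)\to 1$ locally uniformly and $u_c^R$ is compactly supported with values strictly below $1$, one can place $u_c^R$ below $u(\cdot,T)$ for some $T$; the solution started from $u_c^R$ is radial symmetric-decreasing, so Lemma~\ref{spread} applies and yields $R_\delta(t)\ge c't+R_0$ for every $c'<c$. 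Comparison then gives the lower bound for $u$.

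The point is that Lemma~\ref{spread} is proved purely from the weighted-energy inequality \eqref{eq:dPhicdt} together with Lemma~\ref{l:poinc}; it never touches a differential inequality of Fife--McLeod type and hence needs no non-degeneracy of $f$ at $0$ or $1$. Your route, by contrast, has to control $-q'(t)+f(\bar u)-f(\bar u-q)$ in the tails, and when $f'(0)=0$ (monostable) or $f'(1)=0$ (allowed throughout) there is no spectral gap to drive $q(t)\downarrow 0$; you acknowledge this but do not indicate how to overcome it, and in the generality of the theorem --- which is precisely aimed at removing those non-degeneracy hypotheses --- this is a real gap, not a technicality. The paper's energetic construction is what makes the lower bound go through uniformly across bistable, ignition and degenerate-monostable nonlinearities.
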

\noindent Here, as usual, the rotation map $Q$ is defined via
$Q(\phi(x)) := \phi(Ax)$ for some $A \in SO(N)$. We note in passing
that the same result is well known for $\theta_0 > 0$, or for
$\theta_0 = 0$ and $f'(0) > 0$, in the case of compactly supported
initial data \cite{aronson78}. In particular, in the latter case the
problem exhibits {\em hair-trigger effect}, i.e., any non-zero initial
data gives rise to the solution that converges locally uniformly to
1. Therefore, assuming $f'(0) \leq 0$ throughout our paper is not
really a restriction.

Let us note that for $c \geq c^\dag$ the functional $\Phi_c[u]$ is
bounded from below by zero for all $u \in H^1_c(\mathbb R^N)$
\cite{mn:cms08}.  Therefore, it would be natural to try to use the
monotone decrease of $\Phi_c$ evaluated on the solution of
\eqref{mainc} to establish convergence of solutions of \eqref{mainc}
to traveling fronts. This is indeed possible in the case $N = 1$,
provided that $f'(0) \leq 0$ and $f'(1) < 0$ in addition to
\eqref{bistable} and \eqref{negpotential}. In this case the solutions
of \eqref{mainc} with front-like initial data converge exponentially
fast to a translate of the one-dimensional non-trivial minimizer of
$\Phi_c$ \cite{mn:sima12}. However, for $N > 1$ it is known that
solutions of \eqref{main} with bistable nonlinearities go to zero
locally uniformly in the reference frame moving with speed $c^\dag$
\cite{uchiyama85,roussier04}.

\begin{rmk}
  \label{r:lb}
  Removing the assumption that $\phi \in L^2_c(\mathbb R^N)$ in
  Theorem \ref{t:radfront}, one still has
  \begin{align}
    \label{eq:Rdinftyp}
    \liminf_{t \to \infty} {R^\pm_\delta(t) \over t} \geq c^\dag.
  \end{align}
\end{rmk}

From Theorem \ref{t:radfront} and Remark \ref{r:lb}, one can see that
under our assumptions on $f$ the ignition behavior implies propagation
for general $L^2$ initial data. We now consider further implications
of propagation for {\em radial symmetric-decreasing} data.
\begin{enumerate}
\item[(SD)] The initial condition $\phi(x)$ in (\ref{initial}) is
  radial symmetric-decreasing, i.e., $\phi(x)=g(|x|)$ for some $g(r)$
  that is non-increasing for every $r>0$.
\end{enumerate}
Note that the slight abuse of notation in the definition (SD) is not a
problem, since the solution $u(x, t)$ of \eqref{main} satisfying
\eqref{initial} and (SD) is a strictly decreasing function of $|x|$
for all $t > 0$. We will show that for initial data obeying (SD),
propagation implies that the energy dissipation rate cannot vanish,
which means that ignition always leads to the energy not being bounded
from below. In fact, the converse also holds. This leads to the
following result which characterizes the ignition scenario via the
asymptotic behavior of the energy evaluated on solutions of
\eqref{main}.

\begin{thm}[Ignition]
  \label{t:igni}
  Let \eqref{bistable} hold with some $\theta_0 \in [0, 1)$, and let
  $f'(0) = 0$ if $\theta_0 = 0$, or let \eqref{negpotential} hold if
  $\theta_0 > 0$. Assume that $u(x, t)$ is a solution of \eqref{main}
  satisfying \eqref{initial} with (SD). Then:
  \begin{enumerate}[(i)]
  \item If $u(\cdot, t_n) \to 1$ locally uniformly in $\mathbb R^N$
    for
    some sequence of $t_n \to \infty$, then \\
    \mbox{$\displaystyle \lim_{t \to \infty} E[u(\cdot, t)] =
      -\infty$.}
  \item If $\displaystyle \lim_{t \to \infty} E[u(\cdot, t)] < 0$,
    then $u(\cdot, t) \to 1$ locally uniformly in $\mathbb R^N$ as
    $t \to \infty$.
  \end{enumerate}
\end{thm}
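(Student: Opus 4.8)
The plan uses only the $L^2$–gradient–flow structure \eqref{eq:dEdt}, the strict radial monotonicity of $u(\cdot,t)$ for $t>0$, the comparison principle, and the classical fact that under our hypotheses on $f$ there exist smooth compactly supported symmetric‑decreasing data whose solutions converge locally uniformly to $1$ (cf. \cite{aronson78}).

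For (i) I first upgrade the subsequential convergence to a full limit. Since $u$ is SD and $u(\cdot,t_n)\to 1$ locally uniformly, for large $n$ the function $u(\cdot,t_n)$ dominates a fixed such datum $\zeta$, so by the comparison principle $u(\cdot,t_n+s)\geq u_\zeta(\cdot,s)$ and hence $u(\cdot,t)\to 1$ locally uniformly as $t\to\infty$; in particular $R^+_\delta(t)\to\infty$ for every $\delta\in(0,1)$. Suppose, towards a contradiction, that $E[u(\cdot,t)]$ is bounded below; being non‑increasing it then converges, so by \eqref{eq:dEdt} there is $s_n\to\infty$ with $\|u_t(\cdot,s_n)\|_{L^2(\mathbb R^N)}\to 0$. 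Fix $\delta\in(\theta_0,1)$ and recenter at the front: with $\rho_n:=R^+_\delta(s_n)\to\infty$ put $\hat u_n(y,z):=u(y,z+\rho_n,s_n)$. Then $\Delta\hat u_n+f(\hat u_n)\to 0$ in $L^2(\mathbb R^N)$, while parabolic regularity and the $L^\infty$ bound make $\{\hat u_n\}$ bounded in $C^{2,\alpha}(\mathbb R^N)$; along a subsequence $\hat u_n\to\psi$ in $C^2_{loc}$, with $\psi$ solving \eqref{stationary}, $0\leq\psi\leq 1$, $\psi$ non‑increasing in $z$, and $\psi(0)=\delta$. Writing $\hat u_n$ through the radial profile of $u(\cdot,s_n)$ and using $\sqrt{|y|^2+(z+\rho_n)^2}=z+\rho_n+O(\rho_n^{-1})$ together with the uniform Lipschitz bound, one sees $\hat u_n(y,z)-\hat u_n(0,z)\to 0$ locally uniformly, so $\psi=\psi(z)$ is one‑dimensional and solves $\psi''+f(\psi)=0$ on $\mathbb R$. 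A bounded non‑increasing solution of this ODE is either constant — impossible since $f(\delta)>0$ — or connects two zeros $\psi_-,\psi_+\in\{0,\theta_0,1\}$ of $f$ with $\psi_+<\delta<\psi_-$; its conserved quantity $\tfrac12(\psi')^2-V(\psi)$, evaluated where $\psi'\to 0$, forces $V(\psi_+)=V(\psi_-)$, which is incompatible with $V(0)=0\leq V(\theta_0)$ and $V(1)<0$ (a consequence of \eqref{bistable}–\eqref{negpotential}) for every admissible pair. This contradiction proves $\lim_{t\to\infty}E[u(\cdot,t)]=-\infty$.

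For (ii) assume $\lim_{t\to\infty}E[u(\cdot,t)]<0$ and set $u_*:=\inf\{s\in[0,1]:V(s)<0\}$, so $u_*\in[\theta_0,1)$ and $V\geq 0$ on $[0,u_*]$. If the limit equals $-\infty$, then from $\int_{\mathbb R^N}V(u(\cdot,t))\leq E[u(\cdot,t)]\to-\infty$ and $V\geq-\max_{[0,1]}|V|$ we get $|\{u(\cdot,t)>u_*\}|\to\infty$; since $u_*\geq\theta_0$ and levels above $\theta_0$ grow under the flow, comparison with a compactly supported subsolution gives $u(\cdot,t)\to 1$ locally uniformly. If the limit is finite, pick (via \eqref{eq:dEdt}) $s_n\to\infty$ with $\|u_t(\cdot,s_n)\|_{L^2(\mathbb R^N)}\to 0$; then, as in (i), a subsequence of $\{u(\cdot,s_n)\}$ converges in $C^2_{loc}$ to an SD solution $v$ of \eqref{stationary} with $0\leq v\leq 1$. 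If $v\equiv 1$ we obtain ignition as above. Otherwise $v$ is $\equiv 0$, a ground state, or a solution with $v(\infty)\in(0,\theta_0]$; boundedness of $E[u(\cdot,s_n)]$ keeps the ball $\{u(\cdot,s_n)>u_*\}$ of uniformly bounded radius, which rules out the possibilities with $\int V(v)=+\infty$ and makes the negative part of $E$ uniformly integrable along the sequence, so that $\lim_nE[u(\cdot,s_n)]\geq E[v]$. But $E[v]\geq 0$ — trivially if $v\equiv 0$, and, by Pohozaev's identity (applied to $v-\theta_0$ in the remaining case), $E[v]=\tfrac1N\int_{\mathbb R^N}|\nabla v|^2>0$ for a ground state — contradicting $\lim_tE[u(\cdot,t)]<0$. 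Hence $v\equiv 1$ and the solution ignites.

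The step I expect to be the main obstacle in (i) is the recentering‑at‑the‑front construction and the proof that the limit profile is one‑dimensional, where the uniform parabolic estimates and the geometry of translating a radial profile out to infinity must be handled with care; in (ii) it is the classification of admissible SD entire solutions of \eqref{stationary} together with the lower semicontinuity of $E$ along $\{u(\cdot,s_n)\}$ — the monostable case $\theta_0=0$ and the degenerate cases with $V(\theta_0)=0$, in which $\{u>u_*\}$ need not be bounded, require an additional argument based on the dynamical instability of the level $u=\theta_0$. It is the combination of the sign conditions $V(0)=0\leq V(\theta_0)$, $V(1)<0$ with the strict positivity $E>0$ of ground‑state energies (via Pohozaev) that ties the ignition dichotomy to the sign of the limiting energy.
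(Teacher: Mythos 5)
Your treatment of part (i) takes a genuinely different and valid route from the paper's Lemma~\ref{unboundedenergy}. The paper derives a contradiction from $\int_0^\infty\!\!\int u_t^2\,dx\,dt<\infty$ and the linear spreading of a level set (Lemma~\ref{spread}) by a Cauchy--Schwarz estimate on a growing annulus; you instead recenter at the leading edge $R^+_\delta(s_n)$, extract a one-dimensional limit profile $\psi$ solving $\psi''+f(\psi)=0$ via the flattening of large spheres under the uniform Lipschitz bound, and exclude any bounded monotone heteroclinic with $\psi(0)=\delta\in(\theta_0,1)$ using the conserved quantity $\tfrac12(\psi')^2-V(\psi)$ together with $V\geq 0$ on $[0,\theta_0]$ and $V(1)<0$. (The endpoints $\psi_\pm$ need not lie in $\{0,\theta_0,1\}$ if $f$ has additional zeros in $[0,\theta_0]$, but since $\psi_-\geq\delta>\theta_0$ forces $\psi_-=1$ and $\psi_+\leq\theta_0$ forces $V(\psi_+)\geq 0$, the contradiction still stands.) This ``traveling-wave extraction'' argument is sound and self-contained, and it avoids the wave-like/$\Phi_c$ machinery entirely, which is a real difference in method.

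Part (ii), however, has genuine gaps, and they occur precisely where the paper's exponentially weighted functional $\Phi_c$ does the work. The paper's route is: a negative value of $E$ makes a compactly supported truncation of $u(\cdot,T)$ wave-like, i.e.\ $\Phi_c<0$ for $c$ small (Lemma~\ref{l:wl}); wave-like plus the Poincar\'e inequality Lemma~\ref{l:poinc} forces $R_\delta(t)\gtrsim c't$ (Lemma~\ref{spread}); comparison then yields ignition. You avoid $\Phi_c$ and split on $\lim E=-\infty$ versus $\lim E$ finite. In the first sub-case your bound $\int V(u)\geq V(1)\,|\{u>u_*\}|$ gives $|\{u>u_*\}|\to\infty$, but for monostable $f$ with $f'(0)=0$ one has $u_*=0$, $\{u>u_*\}=\mathbb R^N$ by the strong maximum principle, and the measure estimate is vacuous: you never produce a fixed positive level occupying a large ball, which is exactly what the weighted-energy argument delivers and what the comparison step needs. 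In the second sub-case the inequality $\lim E\geq E[v]\geq 0$ relies on $v$ being a ground state, i.e.\ $|\nabla v|\in L^2$ and $V(v)\in L^1$; for $N\geq 3$ and ignition-type $f$, the subsequential limit may satisfy $v(\infty)\in(0,\theta_0]$, where $V(v(\infty))=0$ but neither the integrability of $V(v)$ nor the lower semicontinuity of $E$ along $u(\cdot,s_n)$ is established, and Pohozaev is not available in the form you invoke. You flag both gaps yourself but close neither, and it is exactly to sidestep these degeneracies that the paper routes (ii) through $\Phi_c$ rather than through $E$ and a classification of $\omega$-limits.
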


The main implication of Theorem \ref{t:igni} is that it excludes the
possibility of the equilibrium $u = 1$ to be the long time limit of
solutions of \eqref{main} with energy bounded from below. Hence, for
initial data satisfying (SD) the remaining possibilities are radial
non-increasing solutions of \eqref{stationary}.  If $v$ is such a
solution, it satisfies an ordinary differential equation in $r = |x|$
and can be parametrized by its value at the origin. More precisely, if
$\mu \in [0, 1)$ is such that $\mu = v(0)$, then $v(x) = v_\mu(|x|)$,
where $v_\mu \geq 0$ satisfies for all $0 < r < \infty$
\begin{align}
  \label{eq:vmu}
  v_\mu''(r) + {N - 1 \over r} \, v_\mu'(r) + f(v_\mu(r)) = 0, \qquad
  v_\mu'(r) \leq 0, \qquad
  v_\mu(0) = \mu, \quad v_\mu'(0) = 0. 
\end{align}
It is easy to see that all solutions of \eqref{eq:vmu} are either
identically constant (equal to a zero of $f$), or are strictly
decreasing and approaching a zero of $f$ as $r \to \infty$.

Since ground states in the sense of Definition \ref{d:gs} are a
particular class of solutions of \eqref{stationary} that play a
special role for the long time limits of \eqref{main}, we introduce
the notation
\begin{align}
  \label{eq:yps}
  \Upsilon := \{ \mu \in (0, 1) \ : \ v_\mu(|x|) \text{ is a ground
  state} \}.  
\end{align}
Recall that in many particular situations the set $\Upsilon$ is
generically expected to be a {\em discrete} set of points, possibly
consisting of only a single point, as is the case for the
nonlinearities in \eqref{fnagumo} or \eqref{eq:fstdeg}. Under this
condition, convergence to a ground state becomes full convergence,
rather than sequential convergence, as $t \to \infty$.  This
conclusion will be seen to remain true for bistable and ignition
nonlinearities under the following more general assumption:

\begin{enumerate}
\item[(TD)] The set $\Upsilon$ is totally disconnected. 
\end{enumerate}

\noindent By a totally disconnected set, we understand a set whose
connected components are one-point sets. We note that verifying (TD)
in practice may be rather difficult, in view of the quite delicate
structure of the solution set for \eqref{stationary} in its full
generality. Nevertheless, as was already noted above, this condition
is expected to hold generically and allows us to avoid getting into
the specifics of the existence theory for the elliptic equation
\eqref{stationary} and concentrate instead on the evolution problem
associated with \eqref{main}.

As a consequence of Theorem \ref{t:igni} and the gradient flow
structure of \eqref{main}, we have the following general result about
all possible long-time behaviors of solutions of \eqref{main} with
radial symmetric-decreasing initial data in
$L^2(\mathbb R^N) \cap L^\infty(\mathbb R^N)$.

\begin{thm}[Ignition vs. Failure]
  \label{t:omls}
  Let \eqref{bistable} hold with some $\theta_0 \in [0, 1)$, and let
  $f'(0) = 0$ if $\theta_0 = 0$, or let \eqref{negpotential} hold if
  $\theta_0 > 0$. Assume that $u(x, t)$ is a solution of \eqref{main}
  satisfying \eqref{initial} with (SD). Then there are two
  alternatives:
  \begin{enumerate}
  \item $\displaystyle \lim_{t \to \infty} u(\cdot, t) = 1$ locally
    uniformly in $\mathbb R^N$ and
    $\displaystyle \lim_{t \to \infty} E[u(\cdot, t)] = -\infty$.
  \item
    $\displaystyle \liminf_{t \to \infty}\sup_{x \in B_R(0)} \big|
    u(x, t) - v_\mu(|x|) \big| = 0$
    for every $R > 0$ and every $\mu \in I$, where $I = [a,b]$, with
    some $0 \leq a \leq b < 1$, $v_\mu(|x|)$ satisfies \eqref{eq:vmu}
    for all $\mu \in I$, and
    $\displaystyle \lim_{t \to \infty} E[u(\cdot, t)] \geq 0$.
  \end{enumerate}
\end{thm}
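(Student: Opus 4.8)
The plan is to combine the dichotomy furnished by Theorem \ref{t:igni} with the standard gradient-flow analysis of the energy along orbits. First I would observe that by the energy dissipation identity \eqref{eq:dEdt}, the quantity $E[u(\cdot,t)]$ is non-increasing in $t$, so the limit $E_\infty := \lim_{t\to\infty} E[u(\cdot,t)] \in [-\infty,\infty)$ exists. The argument then splits according to the sign of $E_\infty$. If $E_\infty < 0$ (in particular if $E_\infty = -\infty$), then part (ii) of Theorem \ref{t:igni} applies and gives $u(\cdot,t)\to 1$ locally uniformly; conversely, if $u(\cdot,t_n)\to 1$ locally uniformly along some sequence, part (i) of Theorem \ref{t:igni} forces $E_\infty = -\infty$. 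Hence alternative 1 holds precisely when $E_\infty < 0$, and in that case $E_\infty = -\infty$ automatically; this disposes of alternative 1 and simultaneously shows that in the complementary case we must have $E_\infty \geq 0$, which is the last assertion claimed in alternative 2.

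It remains to analyze the case $E_\infty \geq 0$ and extract the convergence statement to radial stationary solutions. Here I would use the now-familiar argument sketched in Section \ref{s:state}: since $E[u(\cdot,t)]$ is bounded below, \eqref{eq:dEdt} gives $\int_0^\infty \|u_t(\cdot,t)\|_{L^2(\mathbb R^N)}^2\,dt < \infty$, so there is a sequence $t_n\to\infty$ with $u_t(\cdot,t_n)\to 0$ in $L^2(\mathbb R^N)$. By parabolic regularity (and the a priori $L^\infty$ bound on $u$, together with Proposition \ref{holder} for uniform-in-space Hölder control in time), the functions $u(\cdot,t_n)$ are precompact in $H^1_{loc}(\mathbb R^N)$, and any subsequential limit $v$ satisfies \eqref{stationary} classically via the weak formulation \eqref{eq:mainweak}. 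Because $u(\cdot,t)$ satisfies (SD) for all $t>0$, each such limit $v$ is radial and non-increasing in $|x|$, hence $v = v_\mu(|x|)$ for some $\mu\in[0,1)$ solving \eqref{eq:vmu}. Moreover $v$ cannot be the constant $1$ since that is excluded by $E_\infty\geq 0$ through Theorem \ref{t:igni}(i); so $\mu<1$ as required.

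The remaining point — and the one I expect to be the main obstacle — is to show that the set of attainable values $\mu$ forms a \emph{closed interval} $I=[a,b]$ rather than an arbitrary subset of $[0,1)$. The natural approach is a connectedness argument: define $I$ to be the set of all $\mu\in[0,1)$ such that $\liminf_{t\to\infty}\sup_{B_R(0)}|u(x,t)-v_\mu(|x|)| = 0$ for every $R>0$, i.e. the set of $\mu$ realized by the $\omega$-limit set of $u$ in $C_{loc}$. Compactness of the $\omega$-limit set (from the uniform bounds and Hölder estimate) shows $I$ is non-empty and closed; the ordering of the profiles $v_\mu$ in $\mu$, together with the fact that $u(\cdot,t)$ is itself a decreasing function of $|x|$ squeezed between consecutive stationary profiles, should force $I$ to be connected — if $\mu_1<\mu_2$ are both in $I$ then, by monotonicity in $|x|$ and an intermediate-value/continuity argument on the first-passage times of $u$ through the ordered family $\{v_\mu\}$, every $\mu\in[\mu_1,\mu_2]$ is also attained. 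One must be slightly careful because distinct profiles $v_\mu$ may cross each other for different $\mu$ when the ground states are not unique, so the argument should be run in terms of the value $u(0,t)$ at the origin: since $u(0,t)$ is continuous in $t$ and the limit profiles are determined by their value at the origin, the set of limit points of $u(0,t)$ as $t\to\infty$ is a closed interval $[a,b]\subset[0,1)$, and for each such value the corresponding full profile $v_\mu$ is the $C_{loc}$-limit along an appropriate sequence by the rigidity of \eqref{eq:vmu}. This yields alternative 2 in the stated form.
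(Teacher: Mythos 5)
Your proposal follows the same architecture as the paper's proof: the Theorem \ref{t:igni} dichotomy to isolate the two alternatives, extraction of $t_n$ with $u_t(\cdot,t_n)\to 0$ in $L^2(\mathbb R^N)$ from \eqref{eq:dEdt}, parabolic regularity and the weak formulation \eqref{eq:mainweak} to obtain radial stationary profiles $v_\mu$ as $C^1_{loc}$ limits, parametrization by $\mu=v(0)$, and Theorem \ref{t:igni}(i) to exclude $\mu=1$. You also correctly anticipate that the profiles $v_\mu$ can cross one another and sensibly switch to the scalar observable $u(0,t)$.

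The gap is in your final assertion that for each limit point $\mu$ of $u(0,t)$ the profile $v_\mu$ is attained in $C_{loc}$ ``by the rigidity of \eqref{eq:vmu}.'' Rigidity of the initial-value problem for \eqref{eq:vmu} identifies \emph{which} stationary profile the limit must be once you know the limit solves \eqref{stationary}; it does nothing to establish stationarity. The fact $u_t(\cdot,t_n)\to 0$ is available only along the specially chosen dissipation sequence, so for an arbitrary sequence $s_n$ with $u(0,s_n)\to\mu$ you cannot yet conclude that a $C^1_{loc}$ limit of $u(\cdot,s_n)$ solves \eqref{stationary}, and thus you cannot conclude it equals $v_\mu$. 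This is precisely where Proposition \ref{holder} is needed in the paper's proof, and not merely for precompactness as you use it: because the H\"older constant in \eqref{holdt2t1} decays to zero as $T\to\infty$ uniformly in $x$, one has $\|u(\cdot,s_n)-u(\cdot,t_{m_n})\|_{L^\infty(\mathbb R^N)}\to 0$ when $t_{m_n}$ is the nearest element of the dissipation sequence, so the full $C^1_{loc}$ $\omega$-limit set coincides with the set of limits along $\{t_n\}$ and therefore consists only of stationary profiles. Once that identification is in place, the $\omega$-limit set is compact and connected by the standard argument for semiflows with locally precompact orbits, its image under the continuous map $v\mapsto v(0)$ is a closed interval $[a,b]\subseteq[0,1]$, and Theorem \ref{t:igni}(i) gives $b<1$. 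Without this step, your argument does not exclude non-stationary functions from the $\omega$-limit set, and the passage from ``limit point of $u(0,t)$'' to ``attained profile $v_\mu$'' is unsupported.
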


We note that more precise conclusions for the second alternative in
Theorem \ref{t:omls} would need further assumptions on the
nonlinearity of the problem, such as those that would yield (TD), or,
perhaps, analyticity of $f(u)$ \cite{simon83,galaktionov07}. Apart
from the first option, we do not pursue this further in the present
paper.

\begin{rmk}
  \label{r:f}
  It is easy to see that the conclusions of all the above theorems
  remain true, if one assumes that $f \in C^1[0, \infty)$,
  $f(0) = f(1) = 0$, $f'(0) \leq 0$, $f(u) \leq 0$ for all $u \geq 1$,
  and that $u_m = 1$ is the only root of $f(u)$ such that
  $V(u_m) < 0$.
\end{rmk}

We now turn our attention to the study of threshold phenomena. We use
the notations similar to those in \cite{du10}. Let
$X := \{\phi(x):\phi(x)\;\text{satisfies (\ref{initial}) and (SD)}\}$,
and let $\lambda^+ > 0$.  We consider a one-parameter family of
initial conditions $\phi_{\lambda} \in X$ with
$\lambda \in [0, \lambda^+]$, satisfying the following conditions:
\begin{enumerate}
\item[(P1)] The map $\lambda\mapsto\phi_{\lambda} \in X$ is continuous
  from $[0, \lambda^+]$ to $L^2(\mathbb{R}^N)$;
\item[(P2)] If $0<\lambda_1<\lambda_2$, then
  $\phi_{\lambda_1}\leq\phi_{\lambda_2}$ and
  $\phi_{\lambda_1}\not= \phi_{\lambda_2}$ in $L^2(\mathbb{R}^N)$.
\item[(P3)] $\phi_0(x)=0$ and $E[\phi_{\lambda^+}] < 0$.
\end{enumerate}
\noindent We denote by $u_\lambda(x, t)$ the solution of \eqref{main}
with the initial datum $\phi_\lambda$. Clearly, $u_0(x, t) = 0$, and
by Theorem \ref{t:igni} we have $u_{\lambda^+} (\cdot, t) \to 1$
locally uniformly as $t \to \infty$. Therefore, the solutions
corresponding to the endpoints of the interval of
$\lambda \in [0, \lambda^+]$ exhibit qualitatively distinct long time
behaviors. We wish to characterize all possible behaviors for
intermediate values of $\lambda$ and, in particular, to determine the
structure of the threshold set.

To proceed, we consider the cases of bistable, ignition and monostable
nonlinearities separately, as they lead to rather different sets of
conclusions. We start with the bistable nonlinearity, namely, the
nonlinearity $f$ satisfying \eqref{bistable} with $\theta_0 > 0$,
together with \eqref{negpotential} and an extra assumption that
$f(u) < 0$ for all $u \in (0, \theta_0)$.  The key observation is that
for these nonlinearities there exists $\theta^* \in (\theta_0, 1)$
such that
\begin{align}
  \label{eq:thstar}
  \int_0^{\theta^*} f(s) \, ds = 0.
\end{align}
Furthermore, we have $V(u) > 0$ for all $0 < u < \theta^*$ and
$V(u) < 0$ for all $\theta^* < u < \theta^{\diamond}$, for some
$\theta^{\diamond} \in (1, \infty]$.  At the same time, the set of all
zeros of $f$ that lie in $[0, 1)$ consists of only two isolated
values: $u = 0$ and $u = \theta_0$.  Therefore, by Theorem
\ref{t:omls}, if the solution with initial data satisfying (SD) does
not converge locally uniformly to $u = 1$, on sequences of times going
to infinity it either converges to $u = 0$, or to a decaying radial
symmetric-decreasing solution $v$ of \eqref{stationary}. Note that for
bistable nonlinearities and $N \geq 3$, all positive solutions of
\eqref{eq:vmu} converging to zero at infinity are ground states (after
extension to $\mathbb R^N$), since every decaying solution of
\eqref{stationary} is subharmonic for $|x| \gg 1$ and, therefore,
decays no slower than $|x|^{2-N}$. In this case the statement in (TD)
concerns all radial decaying solutions of \eqref{stationary}.


The next theorem further details the above picture and also
establishes the existence of a {\em unique} threshold between ignition
and extinction for monotone families of initial data, under (TD).

\begin{thm}[Threshold for Bistable Nonlinearities]
  \label{t:thrbist}
  Let \eqref{bistable} hold with some $\theta_0 \in (0, 1)$, let
  $f(u) < 0$ for all $u \in (0, \theta_0)$ and let
  \eqref{negpotential} hold. Assume that $u_\lambda(x, t)$ are
  solutions of \eqref{main} with the initial data $\phi_\lambda$
  satisfying (P1)--(P3). Then, under (TD) there exists
  $\lambda_* \in (0, \lambda^+)$ such that:
  \begin{enumerate}
  \item $\displaystyle \lim_{t \to \infty} u_\lambda(\cdot, t) = 1$
    locally uniformly in $\mathbb R^N$ and
    $\displaystyle \lim_{t \to \infty} E[u_\lambda (\cdot, t)] =
    -\infty$ for all $\lambda > \lambda_*$.
  \item $\displaystyle \lim_{t \to \infty} u_\lambda(\cdot, t) = 0$
    uniformly in $\mathbb R^N$ and
    $\displaystyle \lim_{t \to \infty} E[u_\lambda(\cdot, t)] = 0$ for
    all $\lambda < \lambda_*$.
  \item $\displaystyle \lim_{t \to \infty} u_\lambda(\cdot, t) = v_* $
    uniformly in $\mathbb R^N$ and
    $\displaystyle \lim_{t \to \infty} E[u_\lambda(\cdot, t)] \geq
    E_0$,
    where $v_*(x) = v_{\mu_*}(|x|)$ and $v_{\mu_*}$ satisfies
    \eqref{eq:vmu}, $\mu_* \in \Upsilon$, and $E_0 := E[v_*] > 0$, for
    $\lambda = \lambda_*$.
  \end{enumerate}
\end{thm}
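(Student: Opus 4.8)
The plan is to run a standard threshold argument built on the ordering and continuity in (P1)--(P3), using Theorem \ref{t:igni} and Theorem \ref{t:omls} to identify the only three possible long-time behaviors. First I would introduce the two sets $\Lambda_{\mathrm{ign}} := \{\lambda : u_\lambda(\cdot,t)\to 1 \text{ locally uniformly}\}$ and $\Lambda_{\mathrm{ext}} := \{\lambda : u_\lambda(\cdot,t)\to 0 \text{ uniformly}\}$, and observe $\lambda^+\in\Lambda_{\mathrm{ign}}$ (by (P3) and Theorem \ref{t:igni}(ii), since $E[\phi_{\lambda^+}]<0$ forces $\lim E[u_{\lambda^+}(\cdot,t)]<0$) and $0\in\Lambda_{\mathrm{ext}}$. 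By the comparison principle and the strict monotonicity in (P2), $\Lambda_{\mathrm{ign}}$ is an up-set and $\Lambda_{\mathrm{ext}}$ is a down-set: if $\lambda_1<\lambda_2$ then $u_{\lambda_1}\le u_{\lambda_2}$ for all $t$, so ignition of $u_{\lambda_1}$ forces ignition of $u_{\lambda_2}$, and extinction of $u_{\lambda_2}$ forces extinction of $u_{\lambda_1}$. Set $\lambda_* := \inf\Lambda_{\mathrm{ign}}$. The substance is to show (a) $\Lambda_{\mathrm{ign}}$ is open, so $\lambda_*\notin\Lambda_{\mathrm{ign}}$ and $\lambda_*>0$; (b) $\Lambda_{\mathrm{ext}}$ is open, so $\lambda_*\notin\Lambda_{\mathrm{ext}}$; (c) the leftover case $\lambda=\lambda_*$ is exactly convergence to a ground state, with the energy lower bound.

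For (a): if $\lambda\in\Lambda_{\mathrm{ign}}$, then by Theorem \ref{t:igni}(i) $\lim_{t\to\infty}E[u_\lambda(\cdot,t)]=-\infty$, so there is a time $T$ with $E[u_\lambda(\cdot,T)]<0$; by continuous dependence of solutions on the $L^2$ initial datum (parabolic smoothing plus (P1)) and continuity of $E$ on the relevant parabolic orbit for $t=T$, every $\lambda'$ near $\lambda$ has $E[u_{\lambda'}(\cdot,T)]<0$, hence $E[u_{\lambda'}(\cdot,t)]<0$ for all $t\ge T$ by \eqref{eq:dEdt}, and Theorem \ref{t:igni}(ii) gives $\lambda'\in\Lambda_{\mathrm{ign}}$. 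For (b): the key input is that for bistable $f$ with $f(u)<0$ on $(0,\theta_0)$, the constant $u=\theta_0$ acts as a barrier from the stationary side, while $u=0$ is asymptotically stable in a suitable sense; concretely, if $u_\lambda(\cdot,t)\to 0$ uniformly then $u_\lambda(x,T)$ stays below a small radial supersolution (e.g. a slightly translated ground state, which exists and is "above" a small enough profile, or a compactly supported subsolution comparison in reverse) for some large $T$, and openness follows again from continuous dependence together with the fact that a solution starting below this supersolution is trapped and must go to $0$ — here one uses that the only radial decaying stationary solutions are ground states (noted in the text, since for $N\ge 3$ subharmonicity forces the $|x|^{2-N}$ decay) and that there is a spectral/energy gap isolating $u=0$ from the ground-state family by the definition of $\theta^*$ and $V>0$ on $(0,\theta^*)$.

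For (c): by definition of $\lambda_*$, $u_{\lambda_*}$ is in neither open set, so by Theorem \ref{t:omls} it must fall under its second alternative, and moreover it cannot converge (even sequentially) to $u=0$ — otherwise the trapping argument of (b) would place $\lambda_*$ in $\Lambda_{\mathrm{ext}}$. Hence every subsequential limit is a nonconstant radial decaying solution of \eqref{stationary}, i.e.\ a ground state $v_\mu$ with $\mu\in\Upsilon$; since $E[u_{\lambda_*}(\cdot,t)]$ is nonincreasing and bounded below by $0$ (Remark following \eqref{eq:thstar}, or Theorem \ref{t:omls} item 2), it has a limit, and by (TD) the compact connected set $I$ of attained $\mu$-values from Theorem \ref{t:omls} is a single point $\mu_*$; full uniform convergence $u_{\lambda_*}(\cdot,t)\to v_{\mu_*}$ then follows from the Hölder continuity in time (Proposition \ref{holder}) upgrading sequential to full convergence, plus uniform decay of the tail coming from the $L^2$ and subharmonicity bounds. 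Finally $E_0=E[v_*]>0$ because $v_*$ is a nonconstant ground state and $\int_0^1 f>0$ makes $u=0$ not a global minimizer, but any nonconstant critical point has strictly positive energy here (a short Pohozaev/Nehari-type computation, or simply: $V\ge 0$ near the values taken by a small-amplitude part of $v_*$ is not enough, so instead use that $v_*$ is a mountain-pass-type critical point with positive energy — this is where I would cite the standard fact that ground states with $V\ge 0$ on their range have $E>0$, and note $\|v_*\|_\infty=\mu_*<\theta^*$ need not hold, so one argues via $E[v_*]=\tfrac1N\int|\nabla v_*|^2>0$ from the Pohozaev identity).

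The main obstacle I expect is step (b): establishing that $\Lambda_{\mathrm{ext}}$ is open, equivalently that extinction is "stable" under small increases of the initial datum. The difficulty is that $u=0$ is only degenerate-stable when $f'(0)=0$ is allowed (the paper deliberately drops the non-degeneracy hypothesis $f'(0)<0$), so one cannot simply invoke linearized stability; instead the argument must be genuinely variational or use a carefully constructed family of stationary super-solutions converging to $0$ and the fact that there is no stationary solution with values in a punctured neighborhood of $0$ other than ground states bounded away from $0$ in $L^\infty$. Reconciling this with merely $L^2$ (not compactly supported) data is the delicate point, and I would handle it by combining the energy characterization of Theorem \ref{t:igni}, the lower bound $\Phi_c\ge 0$ for $c\ge c^\dagger$, and a localization/comparison argument on large balls.
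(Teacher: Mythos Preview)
Your outline correctly reproduces the paper's argument for the openness of $\Lambda_{\mathrm{ign}}$ (step (a)) and the use of (TD) together with Proposition \ref{holder} to upgrade sequential to full convergence. However, there is a genuine structural gap, and you also misidentify where the difficulty lies.

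\textbf{The main gap: sharpness of the threshold.} Your argument establishes $\Lambda_{\mathrm{ign}}=(\lambda_*,\lambda^+]$ and that $\Lambda_{\mathrm{ext}}$ is an open down-set, hence $\Lambda_{\mathrm{ext}}=[0,\lambda_*^-)$ for some $\lambda_*^-\le\lambda_*$. But nothing you wrote rules out $\lambda_*^-<\lambda_*$, i.e.\ a whole closed interval $[\lambda_*^-,\lambda_*]$ of parameters that are neither in ignition nor extinction. Your step (c) analyzes only the single value $\lambda_*=\inf\Lambda_{\mathrm{ign}}$, so parts 2 and 3 of the theorem remain unproved. This is the heart of the matter, and the paper devotes the last third of its proof to it: if $\lambda_*^-<\lambda_*^+$, both $u_{\lambda_*^-}$ and $u_{\lambda_*^+}$ converge to ground states $v^-\le v^+$; Corollary \ref{c:nomonot} (no ordered ground states, which rests on Lemma \ref{l:unst}) forces $v^-=v^+=:v_*$; then the difference $w=u_{\lambda_*^+}-u_{\lambda_*^-}>0$ satisfies $w_t=\Delta w+f'(\tilde u)w$ with $\tilde u\to v_*$, and the strictly negative principal eigenvalue of the linearization at $v_*$ (again Lemma \ref{l:unst}) gives a compactly supported subsolution $\eps\phi_0^R$ below $w$ that does not decay, contradicting $w\to 0$. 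None of this instability mechanism appears in your proposal.

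\textbf{Step (b) is easier than you think.} In the bistable case $f<0$ on $(0,\theta_0)$ with $\theta_0>0$, so the degeneracy of $f'(0)$ is irrelevant: $\lambda\in\Lambda_{\mathrm{ext}}$ if and only if $u_\lambda(0,T)<\theta_0$ for some $T$ (compare with the spatially homogeneous solution of $u_t=f(u)$ starting from $u_\lambda(0,T)$). This criterion is manifestly open in $\lambda$ by continuous dependence. Your elaborate plan involving $\Phi_c$ and stationary supersolutions is unnecessary here; the genuine difficulty with degenerate $f'(0)$ arises only in the monostable and ignition settings.

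\textbf{A secondary gap in (c).} You assert that every subsequential limit is a ground state, but Theorem \ref{t:omls} only gives a radial solution $v_\mu$ of \eqref{eq:vmu}, which a priori could be $v\equiv\theta_0$ or could decay without having $|\nabla v|\in L^2$ or $V(v)\in L^1$. The paper handles this with an explicit energy argument: tracking the radius $R_k$ where $u_\lambda=\theta_0/2$, it shows $R_k$ stays bounded (else the positive contribution of $V$ on the annulus violates $E\ge 0$ bounded), hence $v\to 0$ at infinity, and then passes to the limit in the energy on balls to get $|\nabla v|\in L^2$ and $V(v)\in L^1$. Your parenthetical about subharmonicity for $N\ge 3$ is only part of this and does not address $N\le 2$ or exclusion of $v\equiv\theta_0$.
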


\noindent Notice that (TD) is the only assumption on the set of radial
symmetric-decreasing solutions of \eqref{stationary} that has been
made in Theorem \ref{t:thrbist}. The fact that (TD) is sufficient is
due to a strong instability of the ground states, which precludes a
possibility of an {\em ordered} family of ground states (for
non-degenerate bistable nonlinearities, this fact was spelled out in
\cite{busca02}). It is interesting whether (TD) can be relaxed, so
that the sharp threshold result holds even when there is a continuum
of ground states.


For ignition nonlinearities, i.e., those that satisfy \eqref{bistable}
with $\theta_0 > 0$ and having $f(u) = 0$ for all
$u \in [0, \theta_0]$, the situation becomes more complicated. Recall
that in the considered setting and with $N = 1$ the threshold solution
is known to converge to the unstable equilibrium solution
$u = \theta_0$ \cite{zlatos06,du10,mz:nodea13}. This happens because
in the case $N = 1$ the only symmetric-decreasing solutions of
\eqref{eq:vmu} that satisfy $0 < v_\mu < 1$ are constant solutions
$v_\mu(r) = \mu$, for any $\mu \in (0, \theta_0]$, and by the well
known property of the heat equation every solution of \eqref{main}
satisfying \eqref{initial} with $\phi \leq \theta_0$ goes to
zero. Hence the solutions of \eqref{main} cannot converge locally
uniformly to any constant solution $0 < v_\mu < \theta_0$. On the
other hand, for $N \leq 2$ it is easy to see that \eqref{eq:vmu} does
not have any non-constant solutions. Thus, the only alternative to
ignition and extinction in this case is convergence to $u = \theta_0$.
With an extra assumption that $f(u)$ is convex in a neighborhood of
$u = \theta_0$, we are then led to the following result.

\begin{thm}[Threshold for Ignition Nonlinearities: Low Dimensions]
  \label{t:thrign2}
  Let $N \leq 2$, let \eqref{bistable} hold with some
  $\theta_0 \in (0, 1)$, let $f(u) = 0$ for all $u \in [0, \theta_0]$
  and let $f(u)$ be convex in some neighborhood of $u= \theta_0$.
  Assume that $u_\lambda(x, t)$ are solutions of \eqref{main} with the
  initial data $\phi_\lambda$ satisfying (P1)--(P3). Then there exists
  $\lambda_* \in (0, \lambda^+)$ such that:
  \begin{enumerate}
  \item $\displaystyle \lim_{t \to \infty} u_\lambda(\cdot, t) = 1$
    locally uniformly in $\mathbb R^N$ and
    $\displaystyle \lim_{t \to \infty} E[u_\lambda(\cdot, t)] =
    -\infty$ for all $\lambda > \lambda_*$.
  \item $\displaystyle \lim_{t \to \infty} u_\lambda(\cdot, t) = 0$
    uniformly in $\mathbb R^N$ and
    $\displaystyle \lim_{t \to \infty} E[u_\lambda(\cdot, t)] = 0$ for
    all $\lambda < \lambda_*$.
  \item
    $\displaystyle \lim_{t \to \infty} u_\lambda(\cdot, t) = \theta_0$
    locally uniformly in $\mathbb R^N$, and
    $\displaystyle \lim_{t \to \infty} E[u_\lambda(\cdot, t)] \geq 0$,
    if $\lambda = \lambda_*$.
  \end{enumerate}
\end{thm}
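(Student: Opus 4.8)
The plan is to read off, from Theorems \ref{t:igni} and \ref{t:omls} together with the special structure of the problem in dimension $N\le 2$, a trichotomy of long-time behaviors, then exploit the monotonicity of $\{u_\lambda\}$ and a soft connectedness argument; the sharpness (uniqueness) of the threshold is the only delicate point, and it is where the convexity of $f$ near $\theta_0$ is needed.

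\textbf{Step 1 (trichotomy of behaviors).} Fix $\lambda$ and write $u=u_\lambda$. The constant $\theta_0$ is a stationary solution of \eqref{main}, so if $\|u(\cdot,t_1)\|_{L^\infty}\le\theta_0$ — equivalently $u(0,t_1)\le\theta_0$, by symmetric monotonicity — for some $t_1\ge 0$, then $u(\cdot,t)\le\theta_0$ for all $t\ge t_1$ by comparison; since $f\equiv 0$ on $[0,\theta_0]$, $u$ then solves the heat equation, whence $u(\cdot,t)=e^{(t-t_1)\Delta}u(\cdot,t_1)\to 0$ uniformly and $E[u(\cdot,t)]=\tfrac12\|\nabla u(\cdot,t)\|_{L^2}^2\to 0$. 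In the complementary case $u(0,t)>\theta_0$ for all $t\ge 0$, Theorem \ref{t:omls} gives either $u(\cdot,t)\to 1$ locally uniformly with $E[u(\cdot,t)]\to-\infty$, or its second alternative with a family $v_\mu$, $\mu\in[a,b]\subset[0,1)$, of radial non-increasing solutions of \eqref{eq:vmu}; for $N\le 2$ these are all constant and equal to a zero of $f$, hence $\mu\in[0,\theta_0]$, while $u(0,t)>\theta_0$ forces $\sup_{B_R(0)}|u(\cdot,t)-\mu|\ge u(0,t)-\mu>\theta_0-\mu$ for every $\mu<\theta_0$, so $[a,b]=\{\theta_0\}$. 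Since the energy is then bounded below, every $\omega$-limit point of $u$ is stationary, radial non-increasing, hence a constant that is a zero of $f$ and, by $u(0,t)>\theta_0$, at least $\theta_0$; it cannot be $1$, since a single $1$-limit would force $E\to-\infty$ by Theorem \ref{t:igni}(i), contradicting the second alternative of Theorem \ref{t:omls}; thus every $\omega$-limit point equals $\theta_0$, and being subsequence-independent the convergence $u(\cdot,t)\to\theta_0$ is locally uniform. In summary, each $u_\lambda$ satisfies exactly one of: (E) $u_\lambda\to 0$ uniformly with $E[u_\lambda(\cdot,t)]\to 0$; (I) $u_\lambda\to 1$ locally uniformly with $E[u_\lambda(\cdot,t)]\to-\infty$; (T) $u_\lambda\to\theta_0$ locally uniformly with $\lim_t E[u_\lambda(\cdot,t)]\ge 0$.

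\textbf{Step 2 (interval structure).} By (P2) and the comparison principle $u_{\lambda_1}\le u_{\lambda_2}$ for $\lambda_1<\lambda_2$; denoting by $\Lambda_E,\Lambda_I,\Lambda_T$ the sets of $\lambda$ of the respective types, $\Lambda_E$ is downward-closed and $\Lambda_I$ is upward-closed, and $0\in\Lambda_E$ while $\lambda^+\in\Lambda_I$ by (P3) and Theorem \ref{t:igni}. Both $\Lambda_E$ and $\Lambda_I$ are relatively open, by comparison and continuous dependence ((P1) with parabolic smoothing): if $u_{\lambda_0}\to 0$ uniformly then $\|u_{\lambda_0}(\cdot,t_0)\|_{L^\infty}<\theta_0$ at some $t_0$, and nearby $\lambda$ inherit $\|u_\lambda(\cdot,t_0)\|_{L^\infty}<\theta_0$ (continuous dependence on a large ball, the a priori bound $\|u_\lambda(\cdot,t_0)\|_{L^2}^2\le e^{2Lt_0}\|\phi_{\lambda^+}\|_{L^2}^2$ with $L$ a Lipschitz constant of $f$, and symmetric monotonicity outside), hence extinction by Step 1; if $u_{\lambda_0}\to 1$ locally uniformly then $u_{\lambda_0}(\cdot,T)$ exceeds, with margin, a fixed nonnegative compactly supported $\psi$ whose evolution ignites — one built in the standard way from a positive solution of the Dirichlet problem for \eqref{stationary} on a large ball, a subsolution of \eqref{main} whose monotone-in-$t$ evolution, there being no nonconstant radial stationary solution for $N\le 2$, increases to $1$ — so nearby $\lambda$ have $u_\lambda(\cdot,T)\ge\psi$ and ignite by comparison. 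Finally $\Lambda_E$ contains a neighborhood of $0$: as $\lambda\to 0$, $\|\phi_\lambda\|_{L^2}\to 0$ by (P1), and since $f$ vanishes near $0$ it is supported, along the solution, on a set of measure $O(\|\phi_\lambda\|_{L^2}^2)$, so the Duhamel formula and the heat-kernel bounds $\|e^{\tau\Delta}g\|_{L^\infty}\le(4\pi\tau)^{-N/4}\|g\|_{L^2}$ and $\le(4\pi\tau)^{-N/2}\|g\|_{L^1}$ give $\|u_\lambda(\cdot,t_0)\|_{L^\infty}<\theta_0$ at a suitable $t_0$, hence extinction. Therefore $\Lambda_E=[0,\lambda^{**})$ and $\Lambda_I=(\lambda_*,\lambda^+]$ with $0<\lambda^{**}\le\lambda_*<\lambda^+$, and by connectedness of $[0,\lambda^+]$ the threshold set is $\Lambda_T=[\lambda^{**},\lambda_*]$, on which every solution has the behavior described in conclusion 3 of the theorem.

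\textbf{Step 3 (sharpness — the main obstacle).} Everything now reduces to $\lambda^{**}=\lambda_*$. Suppose not, and pick $\lambda_1<\lambda_2$ in $\Lambda_T$: then $w:=u_{\lambda_2}-u_{\lambda_1}>0$ by the strong maximum principle (using $\phi_{\lambda_1}\ne\phi_{\lambda_2}$), while $u_{\lambda_1}(\cdot,t),u_{\lambda_2}(\cdot,t)\to\theta_0$ locally uniformly with $u_{\lambda_i}(0,t)>\theta_0$ for all $t$. The aim is to contradict $\lambda_2\in\Lambda_T$ by showing $u_{\lambda_2}$ must ignite — that a threshold solution is unstable ``from above.'' This is exactly where the convexity of $f$ near $\theta_0$ is used: here $f'(\theta_0)=0$ automatically (as $f\equiv 0$ on $[0,\theta_0]$), so $\theta_0$ is a zero of $f$ that is degenerate from above and admits no ``small'' igniting perturbation, and convexity — concretely, consequences such as the superadditivity $f(\theta_0+s)+f(\theta_0+t)\le f(\theta_0+s+t)$ of $s\mapsto f(\theta_0+s)$ — is what allows one to build, at a suitable large time $T$, a radial non-increasing subsolution $\underline u$ of \eqref{main} that detaches from the $\theta_0$-plateau (a profile slightly above $\theta_0$ on a ball wide enough to keep it expanding) with $u_{\lambda_2}(\cdot,T)\ge\underline u$; the evolution of $\underline u$ then lifts $u_{\lambda_2}(0,\cdot)$ to a value bounded away from $\theta_0$, after which the large-ball trigger of Step 2 forces ignition. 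Making this detaching subsolution genuinely expand despite the vanishing gap $u_{\lambda_i}-\theta_0$ — precisely what one-sided convexity of $f$ at the degenerate zero $\theta_0$ is designed to secure — is the technical heart of the argument. Granting $\lambda^{**}=\lambda_*$, write $\lambda_*$ for this common value: then $\lambda_*\in(0,\lambda^+)$ by Step 2, and the three conclusions of the theorem follow since $\lambda>\lambda_*$ gives $\lambda\in\Lambda_I$ and conclusion 1 (with $E\to-\infty$ by Theorem \ref{t:igni}(i)), $\lambda<\lambda_*$ gives $\lambda\in\Lambda_E$ and conclusion 2, and $\lambda=\lambda_*$ gives $\lambda\in\Lambda_T$ and conclusion 3.
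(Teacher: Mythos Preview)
Your proof is correct and follows essentially the same route as the paper: a trichotomy via Theorem~\ref{t:omls} and Lemma~\ref{l:gsnon}, openness of the ignition and extinction sets by continuous dependence, and sharpness via the convexity/superadditivity argument of Zlato\v{s}. The paper is terser in that it simply cites \cite[Lemma~4]{zlatos06} and \cite[Theorem~9]{mz:nodea13} for the sharpness step rather than sketching the detaching-subsolution construction as you do in Step~3, and it obtains openness of $\Sigma_1$ directly from the energy criterion (negativity of $E[u_{\lambda_0}(\cdot,T)]$ persists under small $L^2$ perturbations of the data, by (P1) and Proposition~\ref{p:exist}) rather than via the compactly supported trigger you build in Step~2.
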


On the other hand, for $N \geq 3$ the situation becomes much more
complicated, since in this case many solutions of \eqref{eq:vmu}
exist. In fact, by the results of Berestycki and Lions \cite{BL1983},
for every $v^\infty \in [0, \theta_0)$ there exists a solution of
\eqref{eq:vmu} such that $v_\mu(\infty) = v^\infty$. Also, there may
exist non-constant solutions of \eqref{eq:vmu} with
$v^\infty = \theta_0$, even continuous families of such
solutions. Take, for instance, $f(u) = (u - \theta_0)^{p_S}$ for all
$\theta_0 < u < \theta_1$ for some $\theta_1 \in (\theta_0, 1)$.
Dealing with all these situations would lead us away from the main
subject of the paper, so instead we give a rather general sufficient
condition for our results to hold. Namely, we assume that all
non-constant solutions of \eqref{eq:vmu} that converge to $\theta_0$
at infinity are ground states for the problem with the nonlinearity
shifted by $\theta_0$.

\begin{enumerate}
\item[(V)] If $v$ is a radial symmetric-decreasing solution of
  \eqref{stationary}, then it satisfies all the properties of a ground
  state, except $v(x) \to \theta_0$ as $|x| \to \infty$.
\end{enumerate}

\noindent Under this assumption, we are able to exclude all solutions
of \eqref{eq:vmu} with $v^\infty > 0$ as potential long time limits of
\eqref{main}.

\begin{thm}[Threshold for Ignition Nonlinearities: High Dimensions]
  \label{t:thrign3}
  Let $N \geq 3$ and let \eqref{bistable} hold with some
  $\theta_0 \in (0, 1)$, let $f(u) = 0$ for all $u \in [0, \theta_0]$
  and assume (V).  Assume also that $u_\lambda(x, t)$ are solutions of
  \eqref{main} with the initial data $\phi_\lambda$ satisfying
  (P1)--(P3).  Then, under (TD) there exists
  $\lambda_* \in (0, \lambda^+)$ such that:
  \begin{enumerate}
  \item $\displaystyle \lim_{t \to \infty} u_\lambda(\cdot, t) = 1$
    locally uniformly in $\mathbb R^N$ and
    $\displaystyle \lim_{t \to \infty} E[u_\lambda(\cdot, t)] =
    -\infty$ for all $\lambda > \lambda_*$.
  \item $\displaystyle \lim_{t \to \infty} u_\lambda(\cdot, t) = 0$
    uniformly in $\mathbb R^N$ and
    $\displaystyle \lim_{t \to \infty} E[u_\lambda(\cdot, t)] = 0$ for
    all $\lambda < \lambda_*$.
  \item $\displaystyle \lim_{t \to \infty} u_\lambda(\cdot, t) = v_* $
    uniformly in $\mathbb R^N$ and
    $\displaystyle \lim_{t \to \infty} E[u_\lambda(\cdot, t)] \geq
    E_0$,
    where $v_*(x) = v_{\mu_*}(|x|)$ and $v_{\mu_*}$ satisfies
    \eqref{eq:vmu}, $\mu_* \in \Upsilon$, and $E_0 := E[v_*] > 0$, for
    $\lambda = \lambda_*$.
  \end{enumerate}
\end{thm}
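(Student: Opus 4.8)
The plan is to mirror the structure of the proof of Theorem \ref{t:thrbist}, replacing the role of the equilibrium $u=0$ near $u=0$ by the equilibrium $u=\theta_0$, and using assumption (V) to turn every non-constant radial decaying solution of \eqref{eq:vmu} with $v^\infty=\theta_0$ into a bona fide ground state of the shifted problem. First I would set up the threshold value $\lambda_* := \sup\{\lambda\in[0,\lambda^+] : u_\lambda(\cdot,t)\to 0$ uniformly as $t\to\infty\}$ and, by monotonicity of the flow in the initial data (comparison principle) together with (P2), show that $0<\lambda_*<\lambda^+$ and that item (1) holds for $\lambda>\lambda_*$ while item (2) holds for $\lambda<\lambda_*$. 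The argument that $\lambda<\lambda_*$ forces extinction with $E\to 0$ is the same as in the bistable case: once a solution in class (SD) stays below $u=\theta_0$ eventually it is dominated by a caloric function and decays to zero uniformly, and since $V\geq 0$ on $[0,\theta_0]$ the energy is bounded below by zero along the flow, so $E[u_\lambda(\cdot,t)]\searrow$ a nonnegative limit which by the gradient-flow/$\omega$-limit argument (as in the paragraph following \eqref{eq:mainweak} combined with Theorem \ref{t:igni}) must be $0$ since the only candidate limit is $u=0$. Item (1) for $\lambda>\lambda_*$ follows by continuity (P1), comparison, and Theorem \ref{t:igni}(i).

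The substantive content is item (3): the threshold solution $u_{\lambda_*}$. By Theorem \ref{t:igni} it cannot converge locally uniformly to $u=1$ (otherwise by continuity (P1) and comparison a whole neighborhood of $\lambda_*$ in $(0,\lambda_*)$ would also ignite, contradicting the definition of $\lambda_*$ — here one uses that ignition is an open condition, which follows from Theorem \ref{t:radfront} or directly from the fact that once $E<0$ is reached the behavior is locked in), so by Theorem \ref{t:omls} its $\omega$-limit set consists of radial symmetric-decreasing solutions $v_\mu$ of \eqref{eq:vmu} with $\mu$ ranging over an interval $I=[a,b]\subset[0,1)$ and $\lim_{t\to\infty}E[u_{\lambda_*}(\cdot,t)]\geq 0$. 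The job is to collapse $I$ to a single point $\mu_*$ and to show $\mu_*\in\Upsilon$ with $E_0:=E[v_*]>0$. I would do this in two moves. (a) Exclude $v^\infty>0$ with $v^\infty\neq\theta_0$: a non-constant $v_\mu$ with $v_\mu(\infty)=v^\infty\in(0,\theta_0)$ has $V(v_\mu(x))\to V(v^\infty)>0$ as $|x|\to\infty$, hence infinite energy, so such a $v_\mu$ cannot be an $\omega$-limit of the threshold solution (whose energy stays $\geq 0$ and finite — one must first argue the energy of $u_{\lambda_*}(\cdot,t)$ is bounded, which follows since it is nonincreasing and starts finite). The constant solution $v_\mu\equiv v^\infty\in(0,\theta_0)$ is excluded because it would force $E[u_{\lambda_*}(\cdot,t)]\to -\infty$ (its energy density is $V(v^\infty)<0$ over all of $\mathbb R^N$) — wait, here one uses instead that such a constant is an unstable equilibrium and cannot attract an (SD) solution by the same argument as in the $N=1$ ignition discussion quoted before Theorem \ref{t:thrign2}, together with the barrier/intersection-number argument. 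Also $v^\infty=0$ with $v_\mu$ non-constant is a ground state, i.e. $\mu\in\Upsilon$, and the constant $v_\mu\equiv 0$ is $u=0$, excluded for the threshold by definition of $\lambda_*$ and the continuity argument. (b) This leaves only $v^\infty=\theta_0$ (then (V) says $v_\mu$ is a ground state of the $\theta_0$-shifted nonlinearity) or $v^\infty=0$ with $v_\mu$ a ground state, i.e. $\mu\in\Upsilon\cup\{\theta_0\}$ modulo the shifted ground states; invoking (TD) for $\Upsilon$ and the instability/ordering obstruction for ground states (as in \cite{busca02}, cited in the discussion after Theorem \ref{t:thrbist}) shows the $\omega$-limit set is a single point, and uniform convergence upgrades from local-uniform convergence plus the monotone structure in $|x|$ and the tightness coming from $\phi_{\lambda_*}\in L^2$.

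The main obstacle I expect is step (a)/(b) for the high-dimensional ignition case: unlike the bistable case, the set of radial decaying solutions of \eqref{eq:vmu} is genuinely large — for every $v^\infty\in[0,\theta_0)$ there is one, plus possibly continua accumulating at $\theta_0$ — so the argument excluding all limits with $v^\infty\in(0,\theta_0)$ and all non-(TD) configurations has to be done carefully. The cleanest route is energetic: show that any $\omega$-limit element $v$ of the threshold solution must have $E[v]\leq \liminf_t E[u_{\lambda_*}(\cdot,t)]<\infty$ (lower semicontinuity of $E$ under the relevant convergence, which needs the $L^2$/tightness control to rule out energy escaping to infinity — this is where the $\phi\in L^2$ hypothesis and Proposition \ref{holder} do the work), which immediately kills every $v_\mu$ of infinite energy, i.e. every one with $v^\infty\in(0,\theta_0)$, and forces $v^\infty\in\{0,\theta_0\}$. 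Then (V) makes all the $v^\infty=\theta_0$ ones ground states of the shifted problem, (TD) plus the ground-state instability collapse the $\omega$-limit set, and one identifies $v_*=v_{\mu_*}$ with $\mu_*\in\Upsilon$ (if $v^\infty=0$) or notes $v_*\equiv\theta_0$ is also allowed but then $E_0=E[\theta_0]=\infty\cdot 0$ needs the convention $V(\theta_0)=0$ — actually $V(\theta_0)=-\int_0^{\theta_0}f=0$ since $f\equiv0$ on $[0,\theta_0]$, so $E[\theta_0]=0$, not $>0$; hence one must separately rule out the constant $\theta_0$ as the threshold limit in dimension $N\geq 3$, which is done by noting it is approximated from below by the non-constant ground states of the shifted problem and using the strict ordering/instability to show the threshold solution cannot rest at the degenerate constant — leaving $\mu_*\in\Upsilon$ and $E_0=E[v_*]>0$ as claimed. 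Making this last exclusion rigorous, i.e. that the threshold solution converges to a genuine ground state and not to the flat state $\theta_0$, is the delicate point and would be handled by a continuity-in-$\lambda$ argument: if it converged to $\theta_0$, a comparison with the sub-solution built from a slightly sub-threshold ground state of the shifted problem would show nearby $\lambda<\lambda_*$ also fail to extinguish, contradicting the definition of $\lambda_*$.
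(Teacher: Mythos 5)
Your overall roadmap matches the paper's: define the threshold sets, dispatch items (1) and (2) exactly as in the bistable case, then classify the subsequential limits $v=v_\mu$ of the threshold solution by $v^\infty := v_\mu(\infty)\in[0,\theta_0]$, eliminating $v^\infty\in(0,\theta_0]$ and using (TD) plus ground-state instability to collapse the $\omega$-limit set. However, the two elimination steps --- the substance of the proof --- contain genuine errors that the paper's argument does not.

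First, your reason for excluding $v^\infty\in(0,\theta_0)$ is factually wrong for ignition nonlinearities: since $f\equiv 0$ on $[0,\theta_0]$, we have $V\equiv 0$ on $[0,\theta_0]$, so $V(v^\infty)=0$, not $V(v^\infty)>0$. The potential part of the energy contributes nothing at infinity; the divergence comes entirely from the gradient term. The paper gets this via the Strauss/Berestycki--Lions radial lemma (\cite[Radial Lemma A.III]{BL1983}): for a radial $H^1$ function, $\|\nabla u\|_{L^2}^2 \gtrsim R_k^{N-2}\,|u(R_k)|^2$, and since the level set $\{u_\lambda(\cdot,t_{n_k})=v^\infty\}$ is at a radius $R_k\to\infty$ while $u(R_k,t_{n_k})=v^\infty>0$, the gradient energy of $u_\lambda(\cdot,t_{n_k})$ on the exterior of a fixed ball diverges, contradicting $E[u_\lambda(\cdot,T)]<\infty$ together with the boundedness of $\int_{B_{R_0}}V(u_\lambda)\,dx$. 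Your alternative "lower semicontinuity plus tightness" sketch might conceivably be made rigorous, but as written it lacks the single estimate (the radial lemma) on which the exclusion actually rests. Your parallel aside about the constant $v^\infty$ having energy density $V(v^\infty)<0$ has the same sign error and should be discarded.

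Second, and more seriously, you treat the case $v^\infty=\theta_0$ as an acceptable $\omega$-limit --- a ground state of the $\theta_0$-shifted problem --- and propose to dispose of it later using (TD) and an ordering argument. The paper instead proves that $v^\infty=\theta_0$ (constant or not) is \emph{impossible}. The mechanism is subtler than anything in your sketch: one picks $R_k\to\infty$ with $u_\lambda(R_k,t_{n_k})=\theta_0/2$, applies the radial lemma to force $\int_{B_{R_k}}\big(\tfrac12|\nabla u_\lambda|^2+V(u_\lambda)\big)\,dx\to-\infty$, fixes $R_0=R_{k_0}$ with this local energy $\leq -M$, and then builds a Dirichlet subsolution $\underline u_\lambda$ on $B_{R_0}(0)$ with boundary value $\theta_0/2$. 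The gradient-flow structure on the bounded domain makes the local energy nonincreasing along $\underline u_\lambda$, which, combined with $V$ nonincreasing and the $C^1(B_{R_0})$ convergence $u_\lambda(\cdot,t_{n_k})\to v$, yields $-M\geq\int_{B_{R_0}}V(v)\,dx\geq -\|V(v)\|_{L^1(\mathbb R^N)}$. It is precisely here, and only here, that assumption (V) is used: it guarantees $V(v)\in L^1(\mathbb R^N)$ so that $\|V(v)\|_{L^1}$ is a fixed finite number, and choosing $M$ larger gives the contradiction. Your use of (V) --- as a tag identifying $v$ as a shifted ground state to be fed into the (TD)/instability step --- misses the actual role of the hypothesis. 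Likewise, your proposed exclusion of $v\equiv\theta_0$ by comparison "from below by the non-constant ground states of the shifted problem" is not what the paper does, and I do not see how to make that comparison work; the constant $\theta_0$ is covered automatically by the local-energy argument above (for which $\|V(\theta_0)\|_{L^1}=0$).

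In short: the skeleton is right, but the two key exclusions are supported by incorrect claims about $V$ and by a misreading of what (V) is for. The missing tools are the Berestycki--Lions radial decay estimate and the localized Dirichlet-energy comparison on $B_{R_0}(0)$.
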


\noindent One can see that this situation is more reminiscent of the
bistable case, with ground states taking over the role as the limits
of the threshold solutions. In particular, uniqueness of the ground
state would imply that it attracts the threshold solution uniformly as
$t \to \infty$. Note that no assumption on convexity of the
nonlinearity near $u = \theta_0$ is needed in this case.


Finally, we turn to monostable nonlinearities, i.e., when $f$
satisfies \eqref{bistable} with $\theta_0 = 0$. Here, once again, one
needs to distinguish the cases $N \leq 2$ and $N \geq 3$. Just as in
the case $N = 1$ \cite{mz:nodea13}, there are no solutions of
\eqref{eq:vmu} when $f(u) > 0$ for any $\mu \in (0,1)$ and $N = 2$.
Hence, the threshold behavior becomes particularly simple.

\begin{thm}[Threshold for Monostable Nonlinearities: Low Dimensions]
  \label{t:mono2}
  Let $N \leq 2$ and let \eqref{bistable} hold with $\theta_0 = 0$.
  Assume that $u_\lambda(x, t)$ are solutions of \eqref{main} with the
  initial data $\phi_\lambda$ satisfying (P1)--(P3). Then there exists
  $\lambda_* \in [0, \lambda^+)$ such that:
  \begin{enumerate}
  \item $\displaystyle \lim_{t \to \infty} u_\lambda(\cdot, t) = 1$
    locally uniformly in $\mathbb R^N$ for all $\lambda > \lambda_*$.
  \item $\displaystyle \lim_{t \to \infty} u_\lambda(\cdot, t) = 0$
    uniformly in $\mathbb R^N$ for all $\lambda \leq \lambda_*$.
  \end{enumerate}
\end{thm}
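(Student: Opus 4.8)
The plan is to read off the result from the ignition/failure dichotomy of Theorem~\ref{t:omls}, using that for $N\le2$ there are no nontrivial radial steady states, and then to locate the threshold by the usual monotonicity-in-$\lambda$ argument. First, a reduction. Since $\theta_0=0$ means $f(u)>0$ on $(0,1)$, we have $f'(0)\ge0$. If $f'(0)>0$, the hair-trigger effect applies: every nonzero $\phi\in X$ dominates a nonzero compactly supported bump, so by \cite{aronson78} and comparison — together with the a priori bound $u_\lambda(\cdot,t)\le\max(1,\|\phi_\lambda\|_{L^\infty})$ and the decaying spatially homogeneous supersolution — we get $u_\lambda(\cdot,t)\to1$ locally uniformly for every $\lambda>0$, and the conclusion holds with $\lambda_*=0$. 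Henceforth assume $f'(0)=0$, so that Theorems~\ref{t:igni} and~\ref{t:omls} are available.

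Next I would record that, for $N\le2$, equation \eqref{eq:vmu} has no solution for any $\mu\in(0,1)$: writing it as $(r^{N-1}v_\mu')'=-r^{N-1}f(v_\mu)$ and noting that a non-constant $v_\mu$ decreases strictly to a zero of $f$ in $[0,\mu]\subset[0,1)$, i.e. to $0$, so that $f(v_\mu)>0$ on $(0,\infty)$; integrating, $r^{N-1}v_\mu'(r)=-\int_0^r s^{N-1}f(v_\mu(s))\,ds$, which for $N\le2$ drives $v_\mu$ below $0$ at finite $r$ — a contradiction (this is already in \cite{mz:nodea13}). Hence $\Upsilon=\emptyset$, and in alternative~(2) of Theorem~\ref{t:omls} the interval $I=[a,b]\subset[0,1)$ is forced to be $\{0\}$. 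Since $u_\lambda(\cdot,t)$ is strictly decreasing in $|x|$ we have $\sup_{B_R(0)}u_\lambda(\cdot,t)=u_\lambda(0,t)=\|u_\lambda(\cdot,t)\|_{L^\infty(\mathbb R^N)}$, so for any $u_\lambda$ that does not ignite Theorem~\ref{t:omls}(2) gives $\liminf_{t\to\infty}u_\lambda(0,t)=0$ and $\mathcal E:=\lim_{t\to\infty}E[u_\lambda(\cdot,t)]\in[0,\infty)$.

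The crux is to upgrade this $\liminf$ to a genuine limit. Since $\mathcal E$ is finite, the compactness and energy arguments underlying Theorem~\ref{t:omls} (cf. the discussion after \eqref{eq:dEdt}) show that every subsequential local-uniform limit of $u_\lambda(\cdot,t)$ as $t\to\infty$ is a radial non-increasing solution of \eqref{stationary}. Suppose for contradiction that $\limsup_{t\to\infty}u_\lambda(0,t)=c>0$ (finite, by the a priori bound), and fix $d\in(0,\min(c,1))$. As $t\mapsto u_\lambda(0,t)$ is continuous and has $\liminf$ equal to $0$, the intermediate value theorem produces $\tau_n\to\infty$ with $u_\lambda(0,\tau_n)=d$; a subsequential local-uniform limit along $\tau_n$ is then a radial non-increasing solution of \eqref{stationary} with value $d$ at the origin, i.e. a solution of \eqref{eq:vmu} with $\mu=d\in(0,1)$ — impossible by the previous paragraph. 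Hence $u_\lambda(0,t)\to0$, and therefore $\|u_\lambda(\cdot,t)\|_{L^\infty(\mathbb R^N)}=u_\lambda(0,t)\to0$, which is exactly the uniform convergence claimed in part~(2). I expect this to be the main obstacle: it is where both the restriction $N\le2$ (through the absence of nontrivial radial steady states) and monostability (through the absence of zeros of $f$ in $(0,1)$, which is what kills the intermediate limit) are essential, and the delicate point is to rule out a persistently oscillating solution.

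Finally I would run the threshold bookkeeping. Put $\Lambda:=\{\lambda\in[0,\lambda^+]:u_\lambda\text{ ignites}\}$. By (P2) and comparison, $\Lambda$ is an up-set: if $\lambda\in\Lambda$ and $\lambda'>\lambda$, then $u_{\lambda'}\ge u_\lambda\to1$ locally uniformly while $u_{\lambda'}$ stays bounded, so $\lambda'\in\Lambda$. By (P3) and \eqref{eq:dEdt}, $E[u_{\lambda^+}(\cdot,t)]\le E[\phi_{\lambda^+}]<0$, so Theorem~\ref{t:igni}(ii) gives $\lambda^+\in\Lambda$. Parabolic smoothing together with (P1) makes $\lambda\mapsto u_\lambda(\cdot,1)\in H^1(\mathbb R^N)$ continuous, hence $\lambda\mapsto E[u_\lambda(\cdot,1)]$ continuous; being negative at $\lambda^+$ it is negative on a left-neighborhood of $\lambda^+$, which then lies in $\Lambda$, so $\lambda_*:=\inf\Lambda<\lambda^+$. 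Clearly $0\notin\Lambda$ (as $u_0\equiv0$); and if $\lambda_*>0$ were in $\Lambda$, then $E[u_{\lambda_*}(\cdot,T)]<0$ for some $T>0$ by Theorem~\ref{t:igni}(i), so by the same continuity and Theorem~\ref{t:igni}(ii) a left-neighborhood of $\lambda_*$ would lie in $\Lambda$, contradicting $\lambda_*=\inf\Lambda$. Thus $\lambda_*\notin\Lambda$ and $\Lambda=(\lambda_*,\lambda^+]$, which gives part~(1); for $\lambda\le\lambda_*$ the solution does not ignite, so the previous paragraph yields $u_\lambda(\cdot,t)\to0$ uniformly, which is part~(2), with $\lambda_*\in[0,\lambda^+)$ as asserted. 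A minor technical point throughout is that (P1) provides only $L^2$-continuity of the data, so continuous dependence must be invoked at a positive time and combined with parabolic smoothing into $H^1$.
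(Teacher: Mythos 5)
Your argument is essentially the route the paper takes, just unpacked: reduce to $f'(0)=0$ via the hair-trigger effect, invoke Theorem~\ref{t:omls} to see that non-igniting solutions can only approach radial non-increasing steady states, and use Lemma~\ref{l:gsnon} (which you reprove directly, arriving at the same conclusion as the paper's lemma) to see that no non-constant such steady state with range in $(0,1)$ exists for $N\le 2$, so the only admissible $\mu$ in alternative~(2) of Theorem~\ref{t:omls} is $\mu=0$, forcing $\Sigma_*=\varnothing$; the threshold $\lambda_*$ is then located by the standard monotonicity-plus-openness argument. Your intermediate-value-theorem step is a self-contained way to rule out oscillation, but it is doing work the paper already assigns to Theorem~\ref{t:omls}(2) and Proposition~\ref{holder} (which together show the $\omega$-limit set is $\{v_\mu:\mu\in I\}$, hence a singleton here), so the two proofs differ only in how much of that machinery is re-derived rather than cited.
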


\noindent The possibility of $\lambda_* = 0$ corresponds to the
hair-trigger effect and is always realized when $f'(0) > 0$
\cite{aronson78}. Similarly, hair-trigger effect is still observed in
the case $f'(0) = 0$ when $f(u) \geq c u^{p_F}$ for some $c > 0$ and
all sufficiently small $u$ (see \cite{gui92} and references
therein). At the same time, as was already mentioned in the
introduction, the statement of the theorem becomes non-trivial when
$f(u) = o \left( u^{p_F} \right)$ for $u \ll 1$, in the sense that
$\lambda_* > 0$ for some choices of families of initial data. Note
that here and in the remaining theorems we did not pursue the limit
behavior of the energy, since its analysis for monostable
nonlinearities becomes rather tricky and, at the same time, its
consequences may not be very informative.

The situation becomes considerably more delicate for $N \geq 3$ ,
since in this case many radial, symmetric-decreasing and decaying
solutions of \eqref{stationary} can exist, and their existence and
properties depend quite sensitively on the behavior of $f(u)$ near
zero and the dimension $N$ (for an extensive discussion in the case of
pure power nonlinearities, see \cite{QS2007}). Here our ability to
characterize sharp threshold behaviors relies on the assumption that
all the decaying solutions of \eqref{stationary} be ground states (in
the sense of Definition \ref{d:gs}). The fact that
$|\nabla v| \in L^2(\mathbb R^N)$ for a ground state $v$ gives rise to
a strong instability of $v$, which we also used to establish sharp
threshold results for bistable and ignition nonlinearities. At the
same time, it is known that in the case $N \geq 11$ and pure power
nonlinearities $f(u) = u^p$ with $p \geq p_{JL}$, where
$p_{JL} := 1 + 4/ \left( N - 4 - 2 \sqrt{N - 1} \right)$ is the
Joseph-Lundgren critical exponent, the radial, symmetric-decreasing
and decaying solutions become {\em stable} in a certain sense and form
a monotone increasing continuous family \cite{gui92}. This family of
solutions of \eqref{stationary} clearly produces a counterexample for
the expected sharp threshold behavior for monotone families of data
that do not lie in $L^2(\mathbb R^N)$.

We give two results in which sharp threshold behavior is established
for monostable nonlinearities for $N \geq 3$. We begin with the first
case, in which we assume that there are no solutions of \eqref{eq:vmu}
with $\mu \in (0, 1)$. This situation takes place, for example, when
$f(u) \geq c u^p$ for some $c > 0$ and $p \leq p_{sg}$ for all
$u \ll 1$, where $p_{sg} := N/(N-2)$ is the Serrin critical
exponent. In this situation, \eqref{stationary} is known to have no
positive solutions below $u = 1$ \cite{dancer02,QS2007}.

\begin{thm}[Threshold for Monostable Nonlinearities: High Dimensions,
  Simple] \hfill
  \label{t:mono30}
  Let $N \geq 3$, let \eqref{bistable} hold with $\theta_0 = 0$ and
  assume that \eqref{eq:vmu} has no solutions with $\mu \in (0, 1)$.
  Then, if $u_\lambda(x, t)$ are solutions of \eqref{main} with the
  initial data $\phi_\lambda$ satisfying (P1)--(P3), the conclusion of
  Theorem \ref{t:mono2} holds true.
\end{thm}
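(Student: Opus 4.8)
The plan is to locate the threshold as $\lambda_* := \inf \mathcal{I}$, where $\mathcal{I} := \{\lambda \in [0,\lambda^+] : u_\lambda(\cdot,t) \to 1 \text{ locally uniformly as } t \to \infty\}$ is the ignition set, and to show $\mathcal{I} = (\lambda_*,\lambda^+]$ with $\lambda_* \in [0,\lambda^+)$. First, $\lambda^+ \in \mathcal{I}$: by \eqref{eq:dEdt} the energy is non-increasing along the flow, so $E[u_{\lambda^+}(\cdot,t)] \le E[\phi_{\lambda^+}] < 0$ by (P3), and Theorem~\ref{t:igni}(ii) gives ignition; also $0 \notin \mathcal{I}$ since $u_0 \equiv 0$ by (P3). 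Next, $\mathcal{I}$ is closed under increasing $\lambda$: if $\lambda \in \mathcal{I}$ and $\lambda' > \lambda$, the comparison principle and (P2) give $u_{\lambda'} \ge u_\lambda$, hence $\liminf_{t\to\infty} u_{\lambda'}(x,t) \ge 1$ locally uniformly; since $u_{\lambda'}$ is globally bounded and every $v_\mu$ occurring in the second alternative of Theorem~\ref{t:omls} satisfies $v_\mu \le \mu < 1$, that alternative is impossible for $u_{\lambda'}$, so $u_{\lambda'} \to 1$ locally uniformly. Finally, $\mathcal{I}$ is relatively open: for $\lambda \in \mathcal{I}$, Theorem~\ref{t:igni}(i) gives $\lim_{t\to\infty} E[u_\lambda(\cdot,t)] = -\infty$, so $E[u_\lambda(\cdot,T)] < 0$ for some finite $T$; by continuity of $\lambda \mapsto u_\lambda(\cdot,T)$ into $H^1(\mathbb{R}^N)$ and of $E$ on the relevant bounded set (parabolic smoothing, the auxiliary estimates of Section~\ref{s:prelim}, and (P1)) one gets $E[u_{\lambda'}(\cdot,T)] < 0$, hence $\lim_{t\to\infty} E[u_{\lambda'}(\cdot,t)] < 0$, for all $\lambda'$ near $\lambda$, so $\lambda' \in \mathcal{I}$ by Theorem~\ref{t:igni}(ii). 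Since $\mathcal{I}$ is nonempty, upward closed, relatively open and avoids $0$, it must equal $(\lambda_*,\lambda^+]$ with $\lambda_* \in [0,\lambda^+)$, which is alternative~1 of the theorem.

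It remains to prove $u_\lambda \to 0$ uniformly for $\lambda \in [0,\lambda_*]$. Such a solution is not in $\mathcal{I}$, so by Theorem~\ref{t:omls} it obeys the second alternative there; since \eqref{eq:vmu} has no solution with $\mu \in (0,1)$, the interval $I=[a,b]$ in that alternative must be $\{0\}$ with $v_0 \equiv 0$, and because $u_\lambda(\cdot,t)$ is radial and non-increasing, so that $\sup_{x \in B_R(0)} u_\lambda(x,t) = u_\lambda(0,t)$ for every $R>0$, this reads $\liminf_{t\to\infty} \|u_\lambda(\cdot,t)\|_{L^\infty(\mathbb{R}^N)} = 0$. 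To upgrade this $\liminf$ to a limit I would pass to the $\omega$-limit set: $u_\lambda$ is globally bounded, so $\{u_\lambda(\cdot,t):t\ge 1\}$ is relatively compact in $C^2_{loc}(\mathbb{R}^N)$ and the $\omega$-limit set $\omega_\lambda$ in $C_{loc}(\mathbb{R}^N)$ is nonempty, compact, connected and invariant. Since $u_\lambda \notin \mathcal{I}$ forces $\lim_{t\to\infty} E[u_\lambda(\cdot,t)] \ge 0$ (otherwise Theorem~\ref{t:igni}(ii) would give ignition), the discussion following \eqref{eq:dEdt} shows every element of $\omega_\lambda$ is a bounded, radial, non-increasing solution of \eqref{stationary}; by uniqueness for the initial value problem \eqref{eq:vmu} together with the hypothesis, each such element is a constant equal to a zero of $f$, hence, as $f>0$ on $(0,1)$, lies in $\{0\}\cup[1,\infty)$. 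Since $0 \in \omega_\lambda$ by the previous sentence and $\omega_\lambda$ is connected, $\omega_\lambda = \{0\}$, i.e. $u_\lambda(\cdot,t) \to 0$ in $C_{loc}$; monotonicity in $|x|$ then upgrades this to $\|u_\lambda(\cdot,t)\|_{L^\infty} = u_\lambda(0,t) \to 0$, which is alternative~2 and completes the proof.

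I expect the delicate point to be the reduction of $\omega_\lambda$ to $\{0\}$: the hypothesis on \eqref{eq:vmu} is precisely what forces every radial non-increasing $\omega$-limit to be a constant equilibrium of \eqref{main}, and one then needs that $0$ is isolated among the zeros of $f$ (because $f>0$ on $(0,1)$) and that $\omega_\lambda$ is connected — one may additionally invoke Theorem~\ref{t:igni}(i) to see directly that $u=1$ cannot belong to $\omega_\lambda$. The remaining technical ingredient is the continuity in $\lambda$ of $E[u_\lambda(\cdot,T)]$ at a fixed $T>0$, used for the openness of $\mathcal{I}$; this rests on continuous dependence of the flow on the $L^2$ initial datum at positive times together with $L^2$ and $L^\infty$ bounds uniform in $\lambda$ near $\lambda_*$, of the kind collected among the preliminary results.
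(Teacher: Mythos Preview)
Your proof is correct and follows essentially the same route as the paper's: both identify the ignition set $\mathcal I$ (the paper's $\Sigma_1$) as an open half-interval $(\lambda_*,\lambda^+]$ via continuous dependence of $E[u_\lambda(\cdot,T)]$ on $\lambda$ together with comparison, and then use the non-existence hypothesis on \eqref{eq:vmu} to force the $\omega$-limit set in the second alternative of Theorem~\ref{t:omls} to collapse to $\{0\}$, yielding uniform extinction for all $\lambda\le\lambda_*$. The paper's proof is much terser (it simply says ``$\Sigma_*=\varnothing$'' by invoking the previous arguments), whereas you spell out the $\omega$-limit set reasoning explicitly; note that the full-limit upgrade you carry out by hand is already contained in the proof of Theorem~\ref{t:omls}, where it is shown that $\omega(\phi)$, parametrized by $\mu=v(0)$, equals the interval $I=[a,b]\subset[0,1)$.
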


\noindent Once again, the result of the theorem is non-trivial, for
example, if $f(u) \simeq c u^p$ with some $c > 0$ and
$p_F < p \leq p_{sg}$ for all $u \ll 1$.

\begin{rmk}
  \label{r:mono30}
  The assumptions of Theorem \ref{t:mono30} also hold, for example,
  for $f(u) = u^p - u^q$ with any $p_{sg} < p \leq p_S$ and $q > p$
  \cite[Theorem 3]{bianchi97}. 
\end{rmk}

\noindent On the other hand, by \cite[Theorem 2]{tang00} the set
$\Upsilon$ for the nonlinearity in Remark \ref{r:mono30} consists of a
single point for all $p_S < p < q$, despite the existence of a
continuous family of positive radial symmetric-decreasing decaying
solutions of \eqref{stationary}. We suspect that in this case, apart
from zero, the unique ground state may still be the only attractor of
the threshold solutions.

We now proceed to the second case. As we already noted, the situation
is quite complex to make detailed conclusions about the sharp
threshold behavior without any further assumptions on $f$ and $N$ in
the monostable case. Here, as in the case of ignition nonlinearities
for $N \geq 3$ we chose to give a rather general sufficient condition
in terms of the properties of positive decaying solutions of
\eqref{eq:vmu}, namely, that they consist only of ground states
(however, for an example of nonlinearities for which this is false,
see \cite{lin88}).  This assumption may be verified with the knowledge
of the asymptotic decay of solutions of \eqref{eq:vmu} at
infinity. Note that existence of ground states for \eqref{stationary}
is known in the case when $f(u) = o\left( u^{p_{S}} \right)$ for
$N \geq 3$ \cite{BL1983}.

\begin{thm}[Threshold Monostable Nonlinearities: High Dimensions,
  Complex] \ \hfill
  \label{t:mono31}
  Let $N \geq 3$, let \eqref{bistable} hold with $\theta_0 = 0$ and
  suppose that every non-constant radial symmetric-decreasing solution
  of \eqref{stationary} is a ground state in the sense of Definition
  \ref{d:gs}.  Assume that $u_\lambda(x, t)$ are solutions of
  \eqref{main} with the initial data $\phi_\lambda$ satisfying
  (P1)--(P3). Then, under (TD) there exists
  $\lambda_* \in [0, \lambda^+)$ such that:
  \begin{enumerate}
  \item $\displaystyle \lim_{t \to \infty} u_\lambda(\cdot, t) = 1$
    locally uniformly in $\mathbb R^N$ for all $\lambda > \lambda_*$.
  \item If $\lambda_* > 0$, then
    $\displaystyle \lim_{t \to \infty} u_\lambda(\cdot, t) = 0$
    uniformly in $\mathbb R^N$ for all $\lambda < \lambda_*$.
  \item For $\lambda = \lambda_*$, there are two alternatives: 
    \begin{enumerate}
    \item $\displaystyle \lim_{t \to \infty} u_\lambda(\cdot, t) = 0$
      uniformly in $\mathbb R^N$.
    \item
      $\displaystyle \lim_{t \to \infty} u_\lambda(\cdot, t) = v_* $
      uniformly in $\mathbb R^N$, where $v_*(x) = v_{\mu_*}(|x|)$ and
      $v_\mu$ satisfies \eqref{eq:vmu} with $\mu_* \in \Upsilon$.
    \end{enumerate}
  \end{enumerate}
\end{thm}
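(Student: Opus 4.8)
The plan is to follow the scheme used for Theorems~\ref{t:thrbist} and~\ref{t:thrign3}, which is simpler here because no claim is made about the limit energy and because the hair-trigger case $\lambda_* = 0$ is allowed. By the comparison principle and (P2), $\lambda \mapsto u_\lambda(x, t)$ is non-decreasing, and $u_\lambda(\cdot, t) < u_{\lambda'}(\cdot, t)$ for $0 \le \lambda < \lambda'$ and all $t > 0$ by the strong maximum principle. Set $\lambda_* := \inf \{ \lambda \in [0, \lambda^+] : u_\lambda(\cdot, t) \to 1 \text{ locally uniformly} \}$. This set is non-empty, since $E[u_{\lambda^+}(\cdot, t)] \le E[\phi_{\lambda^+}] < 0$ by \eqref{eq:dEdt} and (P3), so Theorem~\ref{t:igni}(ii) gives ignition of $u_{\lambda^+}$. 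It is an up-set by monotonicity of the family, and I would show it is open: by Theorem~\ref{t:igni}, ignition is equivalent to $\lim_{t \to \infty} E[u_\lambda(\cdot, t)] = -\infty$, hence to $E[u_\lambda(\cdot, t_0)] < 0$ for some $t_0 > 0$; since $\{\phi_\lambda\}$ is bounded in $L^2(\mathbb R^N) \cap L^\infty(\mathbb R^N)$ (being squeezed between $0$ and $\phi_{\lambda^+}$), the map $\lambda \mapsto E[u_\lambda(\cdot, t_0)]$ is continuous (parabolic smoothing makes $\phi \mapsto u(\cdot, t_0)$ continuous into $H^1$, and $V(s) = O(s^2)$ near $0$ makes $w \mapsto \int V(w)$ continuous on $L^2$-convergent sequences of uniformly bounded functions), and by the monotone dissipation of $E$ the set $\{ \lambda : E[u_\lambda(\cdot, t_0)] < 0 \}$ is an open subset of the ignition set. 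Applied near $\lambda^+$ this gives $\lambda_* < \lambda^+$; together with $u_0 \equiv 0$ not igniting, $\lambda_* \in [0, \lambda^+)$. For $\lambda > \lambda_*$ there is $\lambda'' \in (\lambda_*, \lambda)$ with $u_{\lambda''}$ igniting, so $u_\lambda \ge u_{\lambda''}$; applying Theorem~\ref{t:omls} to $u_\lambda$, the second alternative there would force every $\omega$-limit value to be some $v_\mu(|x|)$ with $\mu < 1$, yet $v_\mu \ge \lim u_{\lambda''} = 1$ pointwise, a contradiction, so $u_\lambda(\cdot, t) \to 1$ locally uniformly. This is conclusion~1.

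Next I would analyze $\lambda \in [0, \lambda_*]$, where $u_\lambda$ does not ignite. By Theorem~\ref{t:omls}, every $L^\infty_{loc}$ subsequential limit of $u_\lambda(\cdot, t)$ as $t \to \infty$ equals $v_\mu(|x|)$ for some $\mu$ in an interval $I_\lambda = [a_\lambda, b_\lambda] \subseteq [0, 1)$, with $v_\mu$ solving \eqref{eq:vmu}. Since $f > 0$ on $(0, 1)$, \eqref{eq:vmu} has no constant solution with value in $(0, 1)$, so for $\mu \in I_\lambda \cap (0, 1)$ the function $v_\mu$ is a non-constant radial symmetric-decreasing solution of \eqref{stationary}, hence a ground state by hypothesis, i.e.\ $\mu \in \Upsilon$. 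Thus $I_\lambda \setminus \{0\} \subseteq \Upsilon$, and since $\Upsilon$ is totally disconnected (TD), $I_\lambda$ contains no nondegenerate interval, so $I_\lambda = \{0\}$ or $I_\lambda = \{\mu_*\}$ with $\mu_* \in \Upsilon$. Correspondingly $u_\lambda(\cdot, t) \to 0$ or $u_\lambda(\cdot, t) \to v_{\mu_*}$ locally uniformly (the $\omega$-limit set being a single point, by precompactness of the trajectory in $L^\infty_{loc}$). The convergence is in fact uniform on $\mathbb R^N$: since $u_\lambda(\cdot, t)$ is radial and non-increasing in $|x|$, for $|x| \ge R$ one has $|u_\lambda(x, t) - v_{\mu_*}(|x|)| \le \max\{ u_\lambda(R, t),\, v_{\mu_*}(R)\}$; choosing $R$ large so that $v_{\mu_*}(R)$ is small (recall $v_{\mu_*}(r) \to 0$), and then $t$ large so that $u_\lambda(R, t)$ is close to $v_{\mu_*}(R)$, makes the exterior supremum small, and local uniform convergence on $B_R$ handles the rest.

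The crux is the claim that a ground-state limit can occur for at most one value of $\lambda$, and that value must be $\lambda_*$. Concretely: if $u_{\lambda_1}(\cdot, t) \to v_{\mu_1}$ uniformly with $\mu_1 \in \Upsilon$, then $u_{\lambda'}$ ignites for every $\lambda' > \lambda_1$. Suppose not, and pick such a $\lambda'$ with $u_{\lambda'}$ non-igniting; then $\lambda' \le \lambda_*$, and for every $\lambda \in [\lambda_1, \lambda']$ the solution $u_\lambda$ is non-igniting and satisfies $u_\lambda(0, t) \ge u_{\lambda_1}(0, t) \to \mu_1 > 0$, which rules out extinction, so by the previous paragraph $u_\lambda(\cdot, t) \to v_{\mu_\lambda}$ with $\mu_\lambda \in \Upsilon$ and $\lambda \mapsto \mu_\lambda$ non-decreasing (ordered limits of ordered solutions). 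I would then show that such a nondegenerate monotone family of ground states cannot exist, contradicting (TD) — this is exactly the step carried out in the proof of Theorem~\ref{t:thrbist}, and it rests on the strong instability of ground states coming from $|\nabla v| \in L^2$, equivalently on the negativity of the second variation of $E$ at $v$ in the scaling direction $x \cdot \nabla v$ (visible from the Pohozaev identity), which is incompatible with $v$ being attracting along an ordered one-parameter family. Granting the claim, $\Lambda_G := \{ \lambda : u_\lambda \to \text{a nonzero ground state} \} \subseteq \{\lambda_*\}$: hence for $\lambda < \lambda_*$ the solution $u_\lambda$, being non-igniting and not converging to a nonzero ground state, goes extinct, which is conclusion~2, and for $\lambda = \lambda_*$, which does not ignite, one of the two alternatives in conclusion~3 holds.

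The main obstacle is the instability step just invoked. One must preclude that a monotone family of radial symmetric-decreasing solutions lies on the ``stable set'' of a continuum of ground states, and this requires quantifying the instability of each ground state $v$ and propagating it through the family. The scaling direction $x \cdot \nabla v$ makes $v$ a saddle point of the radial energy, but it is square-integrable only for $N \ge 5$; in dimensions $N = 3, 4$ one instead works with truncations of $x \cdot \nabla v$ (whose Rayleigh quotients still detect a non-positive direction) together with the comparison principle, using that for $\sigma > 1$ the rescaled ground state $v^\sigma(x) = v(x/\sigma)$ is a stationary subsolution lying strictly above $v$, and for $\sigma < 1$ a stationary supersolution lying strictly below $v$, in order to show that a trajectory squeezed between two ordered ground-state-limit trajectories forces those ground states to coincide, after which (TD) rules out a continuum. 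This is the only point in the proof where the full strength of Definition~\ref{d:gs}, namely $|\nabla v| \in L^2(\mathbb R^N)$, is needed.
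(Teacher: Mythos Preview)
Your outline follows the paper's route closely --- openness of the ignition set via the energy criterion of Theorem~\ref{t:igni}, reduction of non-igniting limits to either zero or a single ground state via Theorem~\ref{t:omls} together with (TD), and sharpness by the instability argument already carried out in the proof of Theorem~\ref{t:thrbist}. The overall structure is correct and is essentially what the paper does.

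Your description of the instability step, however, misidentifies the mechanism and leaves a gap. The paper does \emph{not} use the scaling direction $x\cdot\nabla v$ or rescaled states $v(x/\sigma)$ (note incidentally that $v(x/\sigma)$ touches $v$ at the origin, so it is not strictly above $v$). Lemma~\ref{l:unst} instead uses the \emph{translation} direction $\partial_i v$: this lies in $H^1(\mathbb R^N)$ in every dimension $N\ge 1$ precisely because $|\nabla v|\in L^2(\mathbb R^N)$ (Definition~\ref{d:gs}), and being a sign-changing null eigenfunction of $-\Delta - f'(v)$ it forces the principal eigenvalue to be strictly negative, which yields a compactly supported positive eigenfunction $\phi_0^R$ on a large ball with $v\pm\eps\phi_0^R$ as super/subsolutions. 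No dimensional split $N\le 4$ versus $N\ge 5$ and no truncation is needed. Moreover, your appeal to (TD) to rule out ``a nondegenerate monotone family of ground states'' skips the real work: the map $\lambda\mapsto\mu_\lambda$ is monotone but not shown to be continuous, so its image need not contain an interval, and it could even be constant, in which case (TD) says nothing. The paper instead takes the two endpoints $\lambda_*^-<\lambda_*^+$, uses Corollary~\ref{c:nomonot} (no two distinct \emph{ordered} ground states, itself a consequence of Lemma~\ref{l:unst}) to force both limits to equal a single $v_*$, and then argues that $w:=u_{\lambda_*^+}-u_{\lambda_*^-}>0$ solves $w_t=\Delta w+f'(\tilde u)w$ with coefficient uniformly close to $f'(v_*)$ for large $t$, so the negative principal eigenvalue gives $w(\cdot,t)\ge\eps\phi_0^R$, contradicting $w\to 0$.
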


\noindent Assuming that $\lambda_* > 0$, i.e., that the hair-trigger
effect does not occur, the main point of the above theorem is that
under (TD) the threshold is sharp. Yet, we note that one can imagine
rather complex behaviors of the threshold solutions as $t \to \infty$,
if (TD) does not hold. For example, taking $f(u) = u^{p_S}$ for all
$u \leq \frac12$ and $f(u)/u^{p_S}$ decreasing to 0 on $[\frac12, 1]$,
it follows from \cite[Theorem 3]{bianchi97} that all solutions of
\eqref{eq:vmu} with $\mu \in (0, 1)$ are ground states and are given
by \eqref{eq:vEF} with $\lambda \in [2^{2/(N-2)}, \infty)$.  Hence,
our Theorem \ref{t:mono31} does not apply, while Theorem \ref{t:omls}
does.  Here it is not a priori clear whether one could rule out a
threshold solution which approaches the family in \eqref{eq:vEF} with
$\lambda = 1/g(t)$ for some function
$g: [0, \infty) \to (0, 2^{-2/(N-2)})$ that slowly oscillates (with
increasing period) between the two endpoints of its range, approaching
zero on a sequence of times going to infinity. Note, however, that the
more exotic behaviors discussed in \cite{polacik14} cannot occur in
our setting, since we consider $L^2$ initial data.

\section{Preliminaries}
\label{s:prelim}

We start with a basic existence result for \eqref{main} with initial
data from \eqref{initial}. Since we want to take advantage of the
variational structure associated with $\Phi_c$ in \eqref{weighted}, we
also provide an existence result for initial data that lie in the
exponentially weighted spaces. Throughout the rest of the paper,
\eqref{bistable} is always assumed to hold, unless stated otherwise.

\begin{prop}
  \label{p:exist}
  There exists a unique
  $u \in{C^2_1(\mathbb{R}^N\times(0,\infty))} \cap
  L^\infty(\mathbb{R}^N\times(0,\infty))$
  satisfying (\ref{main}) and (\ref{initial}) (using the notations
  from \cite{E1998}), with
  \begin{align}
    \label{eq:L2H2}
    u \in  C([0,\infty);
    L^2(\mathbb{R})) \cap C((0,\infty);H^2(\mathbb{R}^N)),
  \end{align}
  and $u_t \in C((0,\infty);H^1(\mathbb{R}^N))$. Furthermore, if there
  exists $c>0$ such that the initial condition
  $\phi\in{L^2_c(\mathbb{R}^N)} \cap{L^{\infty}(\mathbb{R}^N)}$, then
  the solution of (\ref{main}) and (\ref{initial}) satisfies
  \begin{align}
    \label{eq:L2cH2c}
    u \in C([0,\infty);
    L_c^2(\mathbb{R})) \cap C((0,\infty);H_c^2(\mathbb{R}^N)),
  \end{align}
  with $u_t \in C((0,\infty);H_c^1(\mathbb{R}^N))$.  In addition,
  small variations of the initial data in $L^2(\mathbb{R}^N)$ result
  in small changes of solution in $H^1 (\mathbb{R}^N)$ at any $t > 0$,
  and if $\phi$ satisfies (SD), then so does $u(\cdot, t)$ for all
  $t > 0$.
\end{prop}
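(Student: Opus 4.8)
The plan is to build the solution by the standard semigroup / fixed-point argument, then promote the regularity and propagate the structural properties (monotonicity, positivity, membership in the weighted spaces) by comparison and energy arguments. First I would set up the mild formulation: writing $S(t)$ for the heat semigroup on $L^2(\mathbb R^N)$, a solution is a fixed point of $u(t) = S(t)\phi + \int_0^t S(t-s) f(u(s))\,ds$. Since $\phi \in L^2 \cap L^\infty$ and $f \in C^1[0,\infty)$ is locally Lipschitz, one first truncates $f$ outside $[0, \|\phi\|_\infty + 1]$ (or better, outside $[0,1]$ after establishing the a priori bound $0 \le u \le \max(1,\|\phi\|_\infty)$ by comparison with the constant super- and sub-solutions, using $f(0)=0$, $f(u)\le 0$ for $u\ge 1$) so that the nonlinearity is globally Lipschitz; then the contraction mapping principle on $C([0,T];L^2)$ for small $T$ gives local existence and uniqueness, and the a priori $L^\infty$ bound gives global existence. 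Continuous dependence on $\phi$ in $L^2$ — and hence the stated stability in $H^1$ at positive times — follows from Gronwall applied to the difference of two mild solutions, combined with the $L^\infty \to L^\infty$ smoothing used to control the Lipschitz constant of $f$.

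Next I would upgrade regularity. The $L^\infty$ bound together with interior parabolic estimates ($L^p$ theory, then Schauder) gives $u \in C^{2,1}(\mathbb R^N \times (0,\infty))$, and bootstrapping using $f \in C^1$ gives the claimed classical regularity; the statements $u \in C((0,\infty);H^2)$ and $u_t \in C((0,\infty);H^1)$ come from differentiating the equation (or the Duhamel formula) once in $t$ and using analytic-semigroup smoothing $\|S(t)\|_{L^2 \to H^2} \lesssim t^{-1}$, together with $u_t = \Delta u + f(u)$. The claim $u \in C([0,\infty);L^2)$ at $t=0$ is built into the mild formulation. For the positivity and (SD) structure, I would use the strong maximum principle: $\phi \ge 0$ and $f(0)=0$ give $u > 0$ for $t > 0$; and to see that $u(\cdot,t)$ is radial symmetric-decreasing whenever $\phi$ is, I would invoke the moving-plane / reflection comparison argument of Gidas–Ni–Nirenberg type adapted to the parabolic setting — compare $u(x,t)$ with its reflection across any hyperplane through the origin, using that the difference solves a linear parabolic equation with bounded coefficients and has a sign at $t=0$, so it keeps that sign; radial symmetry plus monotonicity in every such half-space direction yields (SD), and the strong maximum principle upgrades ``non-increasing'' to ``strictly decreasing in $|x|$'' for $t>0$.

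For the weighted statement \eqref{eq:L2cH2c}, I would rerun the fixed-point argument in $C([0,T];L^2_c(\mathbb R^N))$, using that the conjugated operator $e^{cz/2}\Delta e^{-cz/2} = \Delta - \tfrac{c^2}{4} + (\text{drift})$ still generates an analytic semigroup with the same smoothing estimates on the weighted scale $L^2_c \hookrightarrow H^1_c \hookrightarrow H^2_c$; the $L^\infty$ bound already established controls $f(u)$ pointwise, so $e^{cz/2} f(u) \in L^2$ whenever $e^{cz/2} u \in L^2$ (since $|f(u)| \le C|u|$ near $u=0$ and $u$ is bounded), and the contraction closes. Global existence and the continuity-in-$t$ statements transfer verbatim. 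The main technical obstacle I expect is not any single estimate but the bookkeeping at $t = 0$: reconciling the merely-$L^2$ (or $L^2_c$) initial data with the instantaneous smoothing into $H^2$ (resp. $H^2_c$), i.e. proving the maps $t \mapsto u(\cdot,t)$ are continuous into the strong spaces for $t>0$ and into $L^2$ (resp. $L^2_c$) up to $t=0$, uniformly enough that the comparison/symmetrization arguments and the energy identities \eqref{eq:dEdt}, \eqref{eq:dPhicdt} used later are justified; this is where one must be careful that the truncation of $f$ does not interfere (it does not, once $0 \le u \le \max(1,\|\phi\|_\infty)$ is in force) and that the weighted heat semigroup estimates are applied with the correct powers of $t$.
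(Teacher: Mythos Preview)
Your proposal is correct and follows essentially the same route as the paper, which simply cites Lunardi's semigroup theory and \cite[Proposition~3.1]{mn:sima12} for the weighted-space version, notes that $\bar u \equiv \max\{1,\|\phi\|_{L^\infty}\}$ is a global supersolution (matching your a priori bound), and defers the preservation of (SD) to the parabolic reflection argument in \cite[Proposition~52.17]{QS2007}. One trivial slip: the conjugated Laplacian is $e^{cz/2}\Delta e^{-cz/2} = \Delta - c\,\partial_z + \tfrac{c^2}{4}$, with a $+$ sign on the zeroth-order term, though this does not affect your argument.
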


\begin{proof}
  The proof follows as in \cite[Chapter 7]{lunardi} and
  \cite[Proposition 3.1]{mn:sima12}, noting that the function
  $\bar u(x, t) = \max \{1, \| \phi \|_{L^\infty(\mathbb R^N)}\}$ is a
  global supersolution. The symmetric decreasing property of $u$
  follows, e.g., from \cite[Proposition 52.17]{QS2007}.
\end{proof}

\noindent Note that the regularity of solutions in Proposition
\ref{p:exist} guaranties that $E[u(\cdot, t)]$ (resp.
$\Phi_c[\tilde u(\cdot, t)]$) is finite, continuously differentiable
and satisfies \eqref{eq:dEdt} on solutions of \eqref{main}
(resp. \eqref{eq:dPhicdt} on solutions of \eqref{mainc}), for any
$t > 0$.

We next recall the classical regularity properties of bounded
solutions of \eqref{main}. Let $D_1 = Q_1 \times [t_1,t_2]$ be an
$(N+1)$-dimensional cylindrical domain in $(x,t)$, where
$Q_1 \subset \mathbb{R}^N$ is open and $t_1 \geq 0$. Let
$Q_2 \subset Q_1$ be open, and assume that there exists
$\varepsilon>0$ such that
\begin{equation}
\bigcup_{x \in Q_2} B_{\varepsilon}(x) \subset Q_1.
\end{equation}
Moreover, let $D_2 = Q_2 \times [t_1+\varepsilon,t_2]$. Then, if
$u(x,t)$ is a solution of \eqref{main}, by Schauder estimates (see,
e.g., \cite{Fr1964}), there exists $C>0$, depending on $\varepsilon$
but independent of $u$ and $D_1$, such that
\begin{equation}
  \label{parreg}
  \|u_t\|_{L^{\infty}(D_2)} + \sum_{1\leq i \leq N} \| \partial_{x_i}
  u \|_{L^{\infty}(D_2)} + \sum_{1\leq i,j \leq
    N} \| \partial_{x_i}\partial_{x_j} u \|_{L^{\infty}(D_2)} \leq C
  \|u\|_{L^{\infty}(D_1)}. 
\end{equation}
We will refer to this boundedness as ``standard parabolic
regularity''. We note that the estimate in \eqref{parreg} also holds
for all classical solutions of \eqref{stationary}, since they can be
trivially considered as time-independent solutions of \eqref{main}.

\begin{cor}
  \label{c:limsup}
  Let $u(x, t)$ be a solution of \eqref{main} satisfying
  \eqref{initial}. Then
  \begin{align}
    \label{eq:limsupu}
    \limsup_{t \to \infty} \| u(\cdot, t) \|_{L^\infty(\mathbb R^N)}
    \leq 1.    
  \end{align}
\end{cor}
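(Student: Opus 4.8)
The plan is to construct a spatially homogeneous supersolution that decays toward $1$ and use the comparison principle. First I would observe that, by \eqref{bistable}, we have $f(u) \le 0$ for all $u \ge 1$, and since $f \in C^1[0,\infty)$, there is a constant $L > 0$ such that $f(u) \le -L(u - 1)$ fails in general but we do have $f(u) \le 0$ on $[1,\infty)$; more usefully, set $M := \|\phi\|_{L^\infty(\mathbb R^N)}$ and recall from the proof of Proposition \ref{p:exist} that $\bar u \equiv \max\{1, M\}$ is a global supersolution, so $0 \le u(x,t) \le \max\{1,M\}$ for all $t > 0$. If $M \le 1$ there is nothing to prove, so assume $M > 1$. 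On the compact interval $[1, M]$, the $C^1$ function $f$ satisfies $f(u) \le -\beta(u-1)$ for no positive $\beta$ in general (as $f$ could vanish to high order at $1$), so instead I would simply use $f(u) \le 0$ on $[1,M]$ together with $f'$ being bounded there.

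The cleaner route: let $\eta(t)$ solve the ODE $\eta' = f(\eta)$, $\eta(0) = M$. Since $f(u) \le 0$ on $[1, \infty)$ and $f(1) = 0$, the solution $\eta(t)$ is non-increasing and stays in $[1, M]$, and $\eta(t) \to 1$ as $t \to \infty$ because $1$ is the only zero of $f$ in $[1,M]$ and $f < 0$ on $(1, M]$ (note $f(u) < 0$ for $u \in (1,\infty)$ by \eqref{bistable}). The function $\bar u(x,t) := \eta(t)$ is then a (spatially constant) solution of \eqref{main}, hence in particular a supersolution, with $\bar u(x,0) = M \ge \phi(x)$. By the comparison principle for \eqref{main} (which holds for bounded sub/supersolutions, as used implicitly in Proposition \ref{p:exist}), we conclude $u(x,t) \le \eta(t)$ for all $x \in \mathbb R^N$ and $t > 0$. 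Therefore $\|u(\cdot,t)\|_{L^\infty(\mathbb R^N)} \le \eta(t)$, and letting $t \to \infty$ gives
\begin{align*}
  \limsup_{t \to \infty} \|u(\cdot, t)\|_{L^\infty(\mathbb R^N)} \le \lim_{t \to \infty} \eta(t) = 1.
\end{align*}

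The only point requiring a little care is the claim $\eta(t) \to 1$: one must verify that the autonomous ODE trajectory starting at $M > 1$ actually reaches the neighborhood of the equilibrium $1$ rather than stalling at some $u_0 > 1$ with $f(u_0) = 0$ — but this is ruled out since \eqref{bistable} forces $f(u) < 0$ strictly on $(1,\infty)$, so the only rest point of $\eta' = f(\eta)$ in $[1, M]$ is $\eta = 1$, and a monotone bounded trajectory of an autonomous scalar ODE must converge to a rest point. Hence $\eta(t) \downarrow 1$. I expect no real obstacle here; the main (minor) subtlety is just invoking the comparison principle in $\mathbb R^N$ for merely bounded solutions, which is standard and already underlies the existence theory cited in Proposition \ref{p:exist}.
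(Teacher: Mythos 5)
There is a genuine gap. Your argument hinges on the claim that \eqref{bistable} forces $f(u) < 0$ \emph{strictly} for all $u \in (1,\infty)$, which you invoke twice (once in passing and once explicitly to rule out the ODE stalling). But \eqref{bistable} only requires $f(u) \leq 0$ on $[0,\theta_0] \cup (1,\infty)$; it does not forbid zeros of $f$ above $1$. If $f(u_0) = 0$ for some $u_0 \in (1, M)$, the trajectory of $\eta' = f(\eta)$, $\eta(0) = M$, monotonically decreases but stalls at (or above) the largest such zero $u_0 > 1$, and your supersolution $\eta(t)$ converges to $u_0 > 1$, not to $1$. The comparison principle then only yields $\limsup_{t\to\infty}\|u(\cdot,t)\|_{L^\infty} \leq u_0$, which is not the claimed bound.

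The paper's proof sidesteps this by using the fact that the initial datum lies in $L^2 \cap L^\infty$, so $u(\cdot,T)$ tends to zero uniformly as $|x|\to\infty$ for any $T>0$. The supersolution is $\bar u(x,t) := 1 + a\,[4\pi(t-T+b)]^{-N/2}\exp\{-|x|^2/[4(t-T+b)]\}$, i.e.\ the constant $1$ plus a multiple of the heat kernel. Since the Gaussian solves the heat equation exactly and $\bar u > 1$ everywhere, one only needs $f \leq 0$ on $(1,\infty)$ (no strictness) for $\bar u$ to be a supersolution. One can then choose $a$ and $b$ so that $\bar u(\cdot,T) \geq u(\cdot,T)$: pick $R$ with $u(x,T) \leq 1$ for $|x| > R$ (which uses the $L^2$ hypothesis), then take $a$ large enough to dominate $\|u(\cdot,T)\|_{L^\infty}$ on $B_R(0)$. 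Since the Gaussian decays to zero uniformly in $x$ as $t\to\infty$, the bound \eqref{eq:limsupu} follows by comparison. The crucial difference is that the paper's supersolution converges to $1$ by the \emph{diffusive} decay of the heat kernel, not by the reaction term, so no strict negativity of $f$ above $1$ is needed. To repair your argument you would either need to strengthen the hypothesis on $f$ (which is not available) or replace the spatially homogeneous supersolution with one, like the paper's, that exploits the spatial decay of the data.
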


\begin{proof}
  By Proposition \ref{p:exist} and standard parabolic regularity, for
  every $T > 0$ the solution $u(x, T)$ is bounded and converges
  uniformly to zero as $|x| \to \infty$. Therefore, if $a > 0$,
  $b > 0$ and
  \begin{align}
    \label{eq:gauss1}
    \bar u(x, t) := 1 + {a \over [4 \pi (t - T + b)]^{N/2}} \exp \left\{
    -{|x|^2 \over 4 (t - T + b)} \right\},
  \end{align}
  then $\bar u(x, t)$ is a supersolution for \eqref{main} for all
  $t \geq T$, and it is possible to choose $a$ and $b$ in such a way
  that $u(x, T) \leq \bar u(x, T)$ for all $x \in \mathbb R^N$. The
  result then follows by comparison principle.
\end{proof}

We now turn to a useful property of solutions of \eqref{main} whose
energy remains bounded for all time. Because of the gradient flow
structure of \eqref{main}, one should expect that such solutions
exhibit ``slowing down'' while approaching stationary solutions on
sequences of times going to infinity. More is true, however, namely,
that a solution with bounded energy also remains close to the limit
stationary solution on a sequence of growing temporal intervals. This
conclusion is a consequence of uniform H\"older continuity of
$u(x, t)$ in $t$ for each $x \in \mathbb R^N$ whenever
$\lim_{t \to \infty} E[u(\cdot, t)] \not= -\infty$ that we establish
below. The result is a basic generalization of the one in
\cite[Proposition 2.8]{mz:nodea13} obtained for $N = 1$.

\begin{prop}
  \label{holder}
  Let $u(x, t)$ be a solution of \eqref{main} satisfying
  \eqref{initial}. If $E[u(\cdot,t)]$ is bounded from below, then
  $u(x,\cdot) \in C^\alpha[T, \infty)$ with
  $\alpha = \frac{1}{2(N+1)}$, for each $x \in \mathbb{R}^N$ and
  $T > 0$. Moreover, the corresponding H\"{o}lder constant of
  $u(x, \cdot)$ converges to $0$ as $T \rightarrow\infty$ uniformly in
  $x$.
\end{prop}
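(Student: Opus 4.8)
The plan is to exploit the energy dissipation identity \eqref{eq:dEdt} to control the $L^2(\mathbb R^N)$ norm of $u_t$ on average in time, and then to convert this averaged time-regularity into pointwise-in-$x$ H\"older continuity in $t$ by interpolating against the uniform $L^\infty$ bounds on $u$ and its spatial derivatives coming from standard parabolic regularity. First I would fix $T > 0$ and note that, since $E[u(\cdot, t)]$ is non-increasing (by \eqref{eq:dEdt}) and bounded from below, integrating \eqref{eq:dEdt} from $T$ to $\infty$ gives $\int_T^\infty \int_{\mathbb R^N} u_t^2 \, dx \, dt = E[u(\cdot, T)] - \lim_{t\to\infty} E[u(\cdot,t)] < \infty$. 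Denote this tail quantity by $\mathcal E(T)$; then $\mathcal E(T) \to 0$ as $T \to \infty$. In particular, for any $t_2 > t_1 \geq T$ we have the Cauchy--Schwarz bound $\| u(\cdot, t_2) - u(\cdot, t_1) \|_{L^2(\mathbb R^N)} \leq \int_{t_1}^{t_2} \| u_t(\cdot, s) \|_{L^2(\mathbb R^N)} \, ds \leq (t_2 - t_1)^{1/2} \, \mathcal E(T)^{1/2}$, which is the averaged time-regularity in the $L^2$ sense.

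Next I would upgrade this $L^2$-in-$x$ time-continuity estimate to a pointwise one. The difference $w := u(\cdot, t_2) - u(\cdot, t_1)$ is, by Corollary \ref{c:limsup} and standard parabolic regularity \eqref{parreg}, uniformly bounded in $L^\infty(\mathbb R^N)$ and has uniformly bounded gradient: there is $M > 0$, independent of $t_1, t_2 \geq T$ (for $T$ bounded away from $0$), with $\| w \|_{L^\infty(\mathbb R^N)} + \| \nabla w \|_{L^\infty(\mathbb R^N)} \leq M$. A standard interpolation argument then bounds $\| w \|_{L^\infty}$ by a power of $\| w \|_{L^2}$: if $|w(x_0)| = a$ for some $x_0$, then $|w| \geq a/2$ on the ball of radius $a/(2M)$ about $x_0$, so $\| w \|_{L^2}^2 \gtrsim a^2 \cdot (a/M)^N = a^{N+2}/M^N$, whence $\| w \|_{L^\infty(\mathbb R^N)} \leq C_N \, M^{N/(N+2)} \, \| w \|_{L^2(\mathbb R^N)}^{2/(N+2)}$. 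Combining with the previous step,
\begin{align*}
  | u(x, t_2) - u(x, t_1) | \leq C_N \, M^{N/(N+2)} \left( (t_2 - t_1)^{1/2} \mathcal E(T)^{1/2} \right)^{2/(N+2)} = C \, \mathcal E(T)^{1/(N+2)} \, |t_2 - t_1|^{1/(2(N+2))},
\end{align*}
uniformly in $x \in \mathbb R^N$ and $t_1, t_2 \geq T$. Hmm — this gives the exponent $\frac{1}{2(N+2)}$ rather than the claimed $\frac{1}{2(N+1)}$; the stated exponent presumably comes from a slightly sharper interpolation (e.g., using that $w$ also solves a parabolic equation and bounding a ball contribution more carefully, or using the $N+1$ spacetime variables), but the argument structure is identical and I would simply carry out whichever interpolation yields the paper's exponent. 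The H\"older constant is then $C \, \mathcal E(T)^{1/(2(N+1))}$ (with the corrected interpolation), which tends to $0$ as $T \to \infty$ uniformly in $x$ because $\mathcal E(T) \to 0$.

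The main obstacle is the uniformity of the constant $M$ in \eqref{parreg} as $t_1, t_2$ range over $[T, \infty)$: this requires applying the interior Schauder estimate on unit-length time slabs $[s, s+1]$ with $s \geq T/2$, using Corollary \ref{c:limsup} to get a uniform $L^\infty$ bound on the larger slab, and noting that the Schauder constant depends only on the slab geometry and not on $s$. Once the uniform-in-time gradient bound is in hand, the interpolation and the final estimate are routine. A minor subtlety worth stating is that $\nabla u(\cdot, s)$ need not be in $L^2(\mathbb R^N)$ a priori unless $E[u(\cdot, s)]$ is finite, but Proposition \ref{p:exist} guarantees $u(\cdot, s) \in H^2(\mathbb R^N)$ for $s > 0$, so $E[u(\cdot, s)]$ is indeed finite for every positive time; combined with monotonicity this makes the tail integral $\mathcal E(T)$ well-defined and finite for every $T > 0$.
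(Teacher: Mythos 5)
Your overall strategy is the same as the paper's: use the energy dissipation identity to control the tail integral $\mathcal E(T) := \int_T^\infty \int_{\mathbb R^N} u_t^2\,dx\,dt = E[u(\cdot,T)] - E_\infty$, then interpolate the difference $w := u(\cdot,t_2) - u(\cdot,t_1)$ against the uniform gradient bound $M$ from parabolic regularity. However, the discrepancy you flagged at the end is an arithmetic slip, not a sign that a sharper interpolation is needed: $\bigl((t_2-t_1)^{1/2}\mathcal E(T)^{1/2}\bigr)^{2/(N+2)} = (t_2-t_1)^{1/(N+2)}\mathcal E(T)^{1/(N+2)}$, not $(t_2-t_1)^{1/(2(N+2))}\mathcal E(T)^{1/(N+2)}$. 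Once corrected, your $L^\infty$--$L^2(\mathbb R^N)$ interpolation yields H\"older exponent $\tfrac{1}{N+2}$, which is \emph{larger} than the claimed $\tfrac{1}{2(N+1)}$ for every $N \geq 1$; you have therefore proved a stronger statement, and the proposition follows a fortiori.

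The paper's own proof differs from yours only in the Lebesgue exponent of the interpolation: instead of the global $L^2$ norm, it bounds the $L^1$ norm of $w$ over a unit ball. From the gradient bound one gets $\int_{B_1(x_0)} |w|\,dx \geq c_N M^{-N} a^{N+1}$ with $a := |w(x_0)|$, while Cauchy--Schwarz over the finite spacetime cylinder $B_1(x_0)\times[t_1,t_2]$ gives $\int_{B_1(x_0)} |w|\,dx \leq C_N\sqrt{t_2-t_1}\,\mathcal E(T)^{1/2}$; combining these yields $a^{N+1} \lesssim \sqrt{t_2-t_1}$ and hence $\alpha = \tfrac{1}{2(N+1)}$. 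Your version ($\|w\|_{L^2(\mathbb R^N)}^2 \gtrsim a^{N+2}/M^N$ against $\|w\|_{L^2(\mathbb R^N)} \lesssim \sqrt{t_2-t_1}\,\mathcal E(T)^{1/2}$, so $a^{N+2} \lesssim t_2-t_1$) uses the same inputs but packages them more efficiently. In either version the H\"older constant is a positive power of $\mathcal E(T)$, which tends to $0$ as $T\to\infty$ uniformly in $x$, so the second assertion also holds. Modulo the arithmetic fix, your proof is correct and in fact gives slightly more than the proposition claims; the ``sharper interpolation'' you speculated about is unnecessary because you already had it.
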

\begin{proof}
  By Proposition \ref{p:exist}, we have $E[u(\cdot, t)] < +\infty$ for
  all $t > 0$, and by \eqref{eq:dEdt} we have that $E[u(\cdot, t)]$ is
  a non-increasing function of $t$. We define
  $E_{\infty} := \lim_{t\rightarrow\infty}E[u(\cdot,t)]$ and observe
  that by our assumptions $E_\infty \in \mathbb R$.  Then, for any
  $x_0 \in \mathbb{R}^N$ and $t_2>t_1\geq T$ we have
  \begin{eqnarray}
    \int_{B_1(x_0)}|u(x,t_2)-u(x,t_1)| \, dx
    &\leq& \int_{t_1}^{t_2}\int_{B_1(x_0)}|u_t(x,t)| \, dx \,
           dt\nonumber\\ 
    &\leq& C_N \sqrt{t_2-t_1} \left( \int_{t_1}^{t_2}\int_{B_1(x_0)}
           u^2_t(x,t) \, dx \, dt \right)^{1/2}\nonumber\\
    &\leq& C_N \sqrt{t_2-t_1} \left( \int_{T}^{\infty}\int_{\mathbb{R}^N}
           u^2_t(x,t) \, dx \, dt \right)^{1/2}\nonumber\\
    &=& C_N \sqrt{(t_2-t_1)  (E[u(\cdot,T)]-E_{\infty})}. 
        \label{sqrtt1t2}
  \end{eqnarray}
  Here and in the rest of the proof $C_N > 0$ is a constant depending
  only on $N$ that changes from line to line.

  On the other hand, by standard parabolic regularity there exists
  $M > 0$ such that
  \begin{equation}
    \|\nabla u(\cdot,t) \|_{L^{\infty}(\mathbb{R}^N)} \leq M, \quad
    \| u(\cdot,t) \|_{L^{\infty}(\mathbb{R}^N)} \leq M  \qquad
    \forall t\geq T.
  \end{equation}
  Without loss of generality we can further assume that
  $u(x_0,t_2)-u(x_0,t_1)\in (0, M]$.  Then, for every
  $x \in B_r(x_0)$, where
  \begin{equation}
    r := \frac{u(x_0,t_2)-u(x_0,t_1)}{2M} \in (0,1),
  \end{equation}
  we have
  \begin{equation}
    u(x,t_2) \geq u(x_0,t_2)-M |x-x_0|
    \geq u(x_0,t_1)+M |x-x_0| \geq u(x,t_1).
  \end{equation}
  This implies that
   \begin{eqnarray}
     \int_{B_1(x_0)}|u(x,t_2)-u(x,t_1)|dx 
     & \geq & \int_{B_r(x_0)}
              (u(x,t_2)-u(x,t_1)) dx
              \nonumber \\ 
     &\geq& \int_{B_r(x_0)}(u(x_0,t_2)-u(x_0,t_1)
            -2M|x-x_0|)dx \nonumber \\
     & = & |u(x_0,t_2)-u(x_0,t_1)| \int_{B_r(x_0)} \left( 1 
           - r^{-1} |x-x_0| \right) dx \nonumber \\
     &=& C_N M^{-N} |u(x_0,t_2)-u(x_0,t_1)|^{N+1}.
   \end{eqnarray}
   Combining this with \eqref{sqrtt1t2} yields
  \begin{equation}
    \label{holdt2t1}
    |u(x_0,t_2)-u(x_0,t_1)|
    \leq C_N \Big( M^{2N} (E[u(\cdot,T)] - E_{\infty})
    \Big)^{\frac{1}{2(N+1)}} (t_2-t_1)^{\frac{1}{2(N+1)}},
  \end{equation}
  i.e., we have $u(x,\cdot) \in C^\alpha[T, \infty)$ for
  $\alpha = {1 \over 2(N+1)}$. Moreover, the limit of the H\"{o}lder
  constant in \eqref{holdt2t1} is
  \begin{equation}
    \lim_{T\rightarrow\infty}
    C_N \left(M^{2N} (E[u(\cdot,T)] -
      E_{\infty})\right)^{\frac{1}{2(N+1)}} =0, 
  \end{equation}
  which completes the proof.
\end{proof}

We will need a technical result about the ground states for
\eqref{stationary}, namely, that all these solutions exhibit a strong
instability with respect to the dynamics governed by \eqref{main}. In
the case $f'(0) < 0$ such a result for all positive solutions of
\eqref{stationary} that decay at infinity is well known (see, e.g.,
\cite[Theorem IV.I]{berestycki81} or \cite[Theorem
5.4]{bates02}). Here we provide a generalization for nonlinearities
that might exhibit a degeneracy near $u = 0$ (for closely related
results, see \cite{cabre04,capella_phd,shi06}).  The key assumption
for the lemma below to hold is that the solution of \eqref{stationary}
has square-integrable first derivatives.

\begin{lemma}
  \label{l:unst}
  Let $f \in C^1[0, \infty)$ and let $v$ be a ground state in the
  sense of Definition \ref{d:gs}. Then there exists $\phi_0^R \geq 0$
  with $\text{supp} \, (\phi_0^R) = B_R(0)$ for some $R > 0$ such that
  $\overline v^\eps(x, t) := v(x) - \eps \phi_0^R(x)$ is a
  supersolution, and
  $\underline v^\eps(x, t) = v(x) + \eps \phi_0^R(x)$ is a
  subsolution, respectively, for \eqref{main}, for all $\eps > 0$
  sufficiently small. 
\end{lemma}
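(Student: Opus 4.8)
The plan is to construct the compactly supported perturbation $\phi_0^R$ as (a cutoff of) the positive principal eigenfunction of the linearized operator around $v$, exploiting the fact that $v$ is a ground state with $|\nabla v| \in L^2(\mathbb R^N)$. The starting observation is that $v_r := \partial_r v$ (equivalently, any directional derivative $\partial_{x_i} v$) solves the linearized equation $\Delta w + f'(v) w = 0$ away from the origin, and by hypothesis $\partial_{x_i} v \in L^2(\mathbb R^N)$. Since $v$ is radial and decreasing, $v_r < 0$ for $r > 0$, so $-v_r$ is a positive solution of the linearized equation on $\mathbb R^N \setminus \{0\}$ that is square-integrable near infinity. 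The existence of such a positive $L^2$ (super)solution forces the principal eigenvalue $\lambda_1(L_R)$ of $L_R := -\Delta - f'(v)$ on $B_R(0)$ with Dirichlet boundary conditions to be negative for $R$ large enough: indeed, by the standard variational characterization and a cutoff argument applied to $-v_r$ (multiply the equation by $\zeta^2(-v_r)$ for a cutoff $\zeta$ supported in $B_R$, integrate by parts, and use $\nabla(-v_r) \in L^2$ plus $-v_r \in L^2$ near infinity to send the cutoff error to zero), one gets $\int_{B_R}(|\nabla(\zeta(-v_r))|^2 - f'(v)\zeta^2 v_r^2)\,dx < 0$ for suitable $\zeta$, hence $\lambda_1(L_R) < 0$.

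Fix such an $R$, let $\lambda_1 := \lambda_1(L_R) < 0$ and let $\psi > 0$ be the corresponding Dirichlet principal eigenfunction on $B_R(0)$, normalized in $L^\infty$; set $\phi_0^R := \psi$ extended by zero, so $\mathrm{supp}\,\phi_0^R = \overline{B_R(0)}$ and $\phi_0^R \geq 0$. For $\underline v^\eps = v + \eps \phi_0^R$ we compute, in $B_R(0)$,
\begin{align}
  \partial_t \underline v^\eps - \Delta \underline v^\eps - f(\underline v^\eps)
  &= -\eps \Delta \psi - \big( f(v + \eps\psi) - f(v) \big) + \big(-\Delta v - f(v)\big) \nonumber \\
  &= -\eps \Delta \psi - \eps f'(v)\psi - \eps\, o(1)
   = \eps\big( \lambda_1 \psi - o(1) \big) \quad \text{as } \eps \to 0,
\end{align}
where the $o(1)$ is uniform on $B_R(0)$ by $C^1$-continuity of $f$ and boundedness of $v, \psi$. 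Since $\lambda_1 < 0$ and $\psi$ is bounded below on any compact subset while vanishing linearly at $\partial B_R$, a slightly more careful estimate using the Hopf lemma (so that $\psi \geq c\,\mathrm{dist}(x,\partial B_R)$ and $|\Delta\psi|$ stays controlled) shows the right-hand side is $\leq 0$ on all of $B_R(0)$ for all $\eps$ small; outside $B_R(0)$ we have $\underline v^\eps = v$, an exact solution, and along $\partial B_R$ the one-sided normal derivatives make $\underline v^\eps$ a valid (viscosity/weak) subsolution. The computation for $\overline v^\eps = v - \eps\phi_0^R$ is identical with signs reversed, giving a supersolution.

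The main obstacle I expect is the behavior near $\partial B_R(0)$: the term $\lambda_1\psi$ degenerates to zero there at the same rate as the error term potentially does, so one cannot conclude the sub/supersolution inequality by a crude sup-norm bound alone. The fix is to either (i) use Hopf's lemma to get a linear lower bound on $\psi$ near the boundary together with a uniform bound on $\Delta\psi = -(\lambda_1 + f'(v))\psi$, absorbing the $o(1)$ since it too is proportional to $\psi$ to leading order (as $f(v+\eps\psi)-f(v)-\eps f'(v)\psi = O(\eps^2\psi^2)$ by $C^1$-smoothness, in fact $o(\eps\psi)$ uniformly), or (ii) slightly shrink the support by taking $\phi_0^R$ to be $\psi$ on $B_{R'}$ with $R' < R$ suitably cut off, which makes the construction cleaner at the cost of the perturbation no longer being an eigenfunction; the eigenvalue being strictly negative on $B_{R'}$ for $R'$ close to $R$ provides the needed strict inequality with room to spare. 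Either way the conclusion is that $\overline v^\eps$ and $\underline v^\eps$ are a super- and subsolution of \eqref{main} for all sufficiently small $\eps > 0$.
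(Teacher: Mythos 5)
Your plan correctly identifies the overall structure, build $\phi_0^R$ as the principal Dirichlet eigenfunction of the linearized operator $L=-\Delta-f'(v)$ on a large ball, then use a Taylor expansion of $f$ to check the sub/supersolution inequalities; and your final verification step is essentially the paper's as well (both rely on the fact that the remainder $f(v+\eps\phi_0^R)-f(v)-\eps f'(v)\phi_0^R$ is itself $o(1)\cdot\eps\phi_0^R$ uniformly, so the degeneracy at $\partial B_R$ poses no problem). However, the step establishing $\lambda_1(L_R)<0$ has a genuine gap, and it is exactly the point where you and the paper diverge.

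You assert that the radial derivative $v_r=\partial_r v$ and the Cartesian derivatives $v_i=\partial_{x_i}v$ are ``equivalent'' and both solve $\Delta w + f'(v)w=0$. Only the Cartesian derivatives do. For $N\ge 2$ the radial derivative satisfies
\[
\Delta(-v_r) + f'(v)(-v_r) = \frac{N-1}{|x|^2}\,(-v_r),
\]
with a nonzero right-hand side. This matters for your cutoff argument: if one feeds the equation as you state it ($\Delta w+f'(v)w=0$) into the identity obtained by multiplying by $\zeta^2(-v_r)$ and integrating by parts, the result is
\[
\int\Bigl(|\nabla(\zeta(-v_r))|^2 - f'(v)\,\zeta^2 v_r^2\Bigr)\,dx \;=\; \int v_r^2\,|\nabla\zeta|^2\,dx \;\ge\; 0,
\]
i.e.\ the \emph{wrong} sign; sending the cutoff error to zero only shows the Dirichlet Rayleigh quotient tends to $0^+$, not that it becomes negative. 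If instead one uses the correct radial equation, the extra favorable term $-\int\frac{N-1}{|x|^2}(\zeta v_r)^2\,dx$ appears on the right, and the cutoff argument \emph{does} yield strict negativity for $R$ large --- but only when $N\ge 2$. For $N=1$ there is no such term and the route through $v_r$ gives nothing; yet the lemma must hold for all $N\ge 1$.

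The paper's proof avoids this entirely. It works with $v_i=\partial_{x_i}v$, which genuinely solves the linearized equation but changes sign. Evaluating the (shifted) Rayleigh quotient at $v_i$ gives the value $f'(0)\le 0$; then, invoking the existence and positivity/uniqueness theorems for Schr\"odinger ground states (Theorems 11.5 and 11.8 in Lieb--Loss), the fact that $v_i$ changes sign forces the infimum to lie strictly below $f'(0)$, hence below $0$, hence there is a positive $H^1(\mathbb R^N)$ minimizer $\phi_0$; truncating $\phi_0$ yields the compactly supported test function with negative Rayleigh quotient. This argument works uniformly in $N\ge 1$. Your idea of using $-v_r$ directly as a test function (with the correct radial equation) is in fact a clean alternative for $N\ge 2$ that bypasses the whole-space minimization theorem, but as written it contains the sign error above, and you would still need the paper's device to handle $N=1$.
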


\begin{proof}
  Consider the Schr\"odinger-type operator:
  \begin{equation}
    \mathfrak{L}=- \Delta + \mathcal{V}(x),
    \qquad \mathcal{V}(x) :=-f'(v(x)),
  \end{equation}
  and the associated Rayleigh quotient (for technical background, see,
  e.g., \cite[Chapter 11]{lieb-loss}):
  \begin{equation}
    \label{eq:rayl}
    \mathfrak{R}(\phi) :=\frac{\int_{\mathbb{R}^N} \left( |\nabla \phi|^2 +
        \mathcal{V}(x)   \phi^2 \right) dx}{\int_{\mathbb{R}^N} \phi^2 dx}.
  \end{equation}
  To study the minimization problem for $\mathfrak{R}$, we also
  consider
  \begin{equation}
    \tilde{\mathfrak{L}}=-\Delta + \tilde{\mathcal{V}}(x),
    \qquad \tilde{\mathcal{V}}(x) :=\mathcal{V}(x)+f'(0)=-(f'(v(x))-f'(0)),
  \end{equation}
  with the associated Rayleigh quotient
  \begin{equation}
    \label{eq:raylt}
    \tilde{\mathfrak{R}}(\phi) = \mathfrak{R}(\phi) + f'(0).
  \end{equation}
  Since $\tilde{\mathcal{V}}(x) \in L^\infty(\mathbb R)$ and vanishes
  at infinity, by \cite[Theorem 11.5]{lieb-loss} there exists a
  function $\phi_0 \in H^1(\mathbb{R}^N)$ such that $\phi_0 \neq 0$
  and $\phi_0$ minimizes $\tilde{\mathfrak{R}}$, provided
  \begin{equation}
    \mathfrak{E}_0 := \inf \{\tilde{\mathfrak{R}}(\phi): \;
    \phi \in H^1(\mathbb{R}^N), \;\;
    \phi \neq 0 \} < 0.
  \end{equation}
  Moreover, by \cite[Theorem 11.8]{lieb-loss}, if there exists a
  minimizer $\phi_0 \in H^1(\mathbb{R}^N)$, $\phi_0 \neq 0$, then
  $\phi_0$ can be chosen to be a strictly positive function, and
  $\phi_0$ is unique up to a constant factor. 

  Now, differentiating \eqref{stationary} with respect to $x_i$,
  $i = 1, \ldots, N$, by boundedness of $f'$ on the range of $v$ the
  function $v_i := \partial v / \partial x_i$ satisfies
  \begin{align}
    \label{eq:vi}
    \Delta v_i + f'(v) v_i = 0 \qquad \text{in} \ \mathcal
    D'(\mathbb R^N).
  \end{align}
  Hence, by elliptic regularity we also have
  $v_i \in H^2(\mathbb R^N)$ \cite{gilbarg}. Thus, each $v_i$ is an
  admissible test function in \eqref{eq:rayl}, and by \eqref{eq:vi} we
  have
  \begin{equation}
    \tilde{\mathfrak{R}}(v_i)= f'(0).
  \end{equation}

  Existence of a ground state $v$ implies that $f'(0) \leq 0$
  (otherwise there is hair-trigger effect \cite{aronson78}). Moreover,
  since $v_i$ changes sign, we know that $v_i$ is not a minimizer of
  $\tilde{\mathfrak{R}}$, so
  $\mathfrak{E}_0 < \tilde{\mathfrak{R}}(v_i) \leq 0$, and there
  exists a positive function $\phi_0 \in H^1(\mathbb{R}^N)$ that
  minimizes $\tilde{\mathfrak{R}}$, with
  \begin{equation}
    \min \{\tilde{\mathfrak{R}}(\phi): \;
    \phi \in H^1(\mathbb{R}^N), \;\;
    \phi \neq 0 \} = \tilde{\mathfrak{R}}(\phi_0) < f'(0).
  \end{equation}
  Note that $\phi_0$ also minimizes $\mathfrak{R}$, with
  \begin{equation}
    \min \{\mathfrak{R}(\phi): \;
    \phi \in H^1(\mathbb{R}^N), \;\;
    \phi \neq 0 \} = \mathfrak{R}(\phi_0) =: \nu_0 < 0.
  \end{equation}

  Approximating $\phi_0$ by a function with compact support and using
  it as a test function, we can then see that
  \begin{equation}
    \label{minHR}
    \min \{\mathfrak{R}(\phi): \;
    \phi \in H^1(\mathbb R^N), \ \text{supp} \, (\phi) \subseteq B_R(0), \;\;
    \phi \neq 0 \}=: \nu^R_0<0
  \end{equation} 
  as well for a sufficiently large $R > 0$. In this case, there exists
  a minimizer $\phi^R_0$ of the problem in \eqref{minHR} whose
  restriction to $B_R(0)$ is positive and belongs to
  $H^1_0(B_R(0)) \cap C^2(\bar{B}_R(0))$. Furthermore, we have
  \begin{equation}
    \label{phiR0}
    \mathfrak{L}(\phi^R_0)=\nu^R_0\phi^R_0 \qquad \text{in} \ B_R(0).
  \end{equation}

  Finally, for $\eps > 0$ define
  $\underline{w}(x, t) := \eps \phi_0^R(x)$. Then, using the fact that
  $f(v + \underline{w}) - f(v) = f'(\tilde v) \underline{w}$ for some
  $v \leq \tilde v \leq v + \underline{w}$, we have for all
  $x \in B_R(0)$ and all $\eps > 0$ sufficiently small
  \begin{eqnarray}
    \label{wunderR}
    \underline{w}_t-\Delta \underline{w}-f'(\tilde{v})\underline{w}
    &=&-\Delta \underline{w}-f'(v)\underline{w}
        +(f'(v)-f'(\tilde{v}))\underline{w}\nonumber\\
    &=&\nu^R_0\underline{w}+(f'(v)-f'(\tilde{v}))
        \underline{w}\nonumber\\
    &\leq&\frac{\nu^R_0}{2}\underline{w}\nonumber\\
    &\leq&0.
  \end{eqnarray}
  It is then easy to see that $\underline v^\eps = v + \underline w$
  is a subsolution for \eqref{main}, since $v$ is a solution of
  \eqref{stationary}. 

  The case of $\overline v^\eps$ is treated analogously.
\end{proof}

We note that as a corollary to this result, we have that for
nonlinearities of one sign near the origin there are no {\em ordered}
collections of ground states. Once again, this result is well known in
the case when $f'(0) < 0$ (see, e.g., \cite[Lemma 3.2]{busca02}). More
generally, we have the following statement (for a related result, see
\cite[Theorem 6.1.4]{capella_phd}).

\begin{cor}
  \label{c:nomonot}
  Let $f \in C^1[0, \infty)$, and assume that there exists
  $\alpha > 0$ such that $f$ does not change sign on $(0,
  \alpha)$.
  Let $v_1$ and $v_2$ be two ground states in the sense of Definition
  \ref{d:gs} such that $v_1 \leq v_2$. Then $v_1 = v_2$.
\end{cor}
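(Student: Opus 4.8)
The plan is to argue by contradiction, deriving from an ordered pair of \emph{distinct} ground states an inconsistency with the order‑preserving character of the semiflow generated by \eqref{main}, using Lemma \ref{l:unst} twice. Suppose $v_1 \le v_2$ are ground states with $v_1 \ne v_2$. Then $w := v_2 - v_1 \ge 0$ solves the linear equation $-\Delta w = c(x)\, w$ with $c(x) := \int_0^1 f'(v_1 + s w)\, ds \in L^\infty(\mathbb R^N)$, so by the strong maximum principle $w > 0$ everywhere, i.e. $v_1 < v_2$ in all of $\mathbb R^N$. Applying Lemma \ref{l:unst} to $v_1$, there are $R > 0$ and $\phi_0^R \ge 0$, supported in a ball $B_R$ and positive in its interior, such that $\underline v^\eps := v_1 + \eps \phi_0^R$ is a subsolution of \eqref{main}, strict on the interior of $B_R$, for all small $\eps > 0$. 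Since $v_2 - v_1$ is bounded below by a positive constant on the compact set $\overline{B_R}$, I fix $\eps > 0$ small enough that $\underline v^\eps \le v_2$, and let $u(\cdot,t)$ solve \eqref{main} with $u(\cdot,0) = \underline v^\eps$ (bounded initial data give a globally well‑posed problem obeying the comparison principle, the hypothesis $\phi \in L^2$ of Proposition \ref{p:exist} being needed only for the variational structure). Then $u(\cdot,t)$ is nondecreasing in $t$ and $\underline v^\eps \le u(\cdot,t) \le v_2$ (the upper bound because $v_2$ is a stationary supersolution), and $u(\cdot,t) > \underline v^\eps$ everywhere for every $t>0$ by the strong maximum principle applied to $u - \underline v^\eps$. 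Standard parabolic regularity then gives that $u(\cdot,t)$ converges monotonically, in $C^2_{loc}$, to a solution $v_*$ of \eqref{stationary} with $v_1 < \underline v^\eps < v_* \le v_2$, the inequality $v_* > v_1$ holding everywhere.

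The crux is to verify that $v_*$ is again a ground state in the sense of Definition \ref{d:gs}. That $v_* > 0$ and $v_*(x) \to 0$ as $|x| \to \infty$ is immediate from $0 < v_1 \le v_* \le v_2$. For $V(v_*) \in L^1(\mathbb R^N)$ I would use the hypothesis that $f$ does not change sign on $(0,\alpha)$: this makes $V$ monotone on $[0,\alpha)$, so once $|x|$ is large enough that $v_2(x) < \alpha$ one has $|V(v_*(x))| \le |V(v_2(x))|$, and $V(v_2) \in L^1(\mathbb R^N)$. For $|\nabla v_*| \in L^2(\mathbb R^N)$ I would write $\nabla v_* = \nabla v_1 + \nabla w_*$ with $w_* := v_* - v_1 \ge 0$ solving a linear elliptic equation whose bounded coefficients tend to $f'(0) \le 0$ at infinity, and bound $\nabla w_*$ in $L^2$ from the decay of $w_*$: this is immediate when $f'(0) < 0$ (then $w_*$ and its gradient decay exponentially), and needs more care when $f'(0) = 0$ (one may instead multiply $\Delta v_* + f(v_*) = 0$ by $v_*$, integrate over $B_\rho$, and use that the boundary term has a favorable sign since $v_*$ is radially nonincreasing, together with the pointwise decay $v_* \le v_2 \le C|x|^{2-N}$). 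This verification is the main technical obstacle of the proof, and it is here that the one‑sign hypothesis on $f$ near the origin is essential.

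With $v_*$ a ground state, apply Lemma \ref{l:unst} once more, now to $v_*$: there are $R' > 0$ and $\psi_0^{R'} \ge 0$, supported in a ball $B_{R'}$ and positive in its interior, such that $\overline v^\delta := v_* - \delta \psi_0^{R'}$ is a supersolution of \eqref{main} for all small $\delta > 0$. Since $v_* - \underline v^\eps$ is bounded below by a positive constant on $\overline{B_{R'}}$, I fix $\delta > 0$ small enough that $\underline v^\eps \le \overline v^\delta$; note $\overline v^\delta < v_*$ on the interior of $B_{R'}$. Let $\hat u(\cdot,t)$ solve \eqref{main} with $\hat u(\cdot,0) = \overline v^\delta$; being started from a supersolution, $\hat u(\cdot,t)$ is nonincreasing in $t$, so $\hat u(\cdot,t) \le \overline v^\delta$ for all $t$, and hence its pointwise monotone limit $\hat v := \lim_{t\to\infty} \hat u(\cdot,t)$ satisfies $\hat v \le \overline v^\delta$, in particular $\hat v < v_*$ on the interior of $B_{R'}$. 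On the other hand $\hat u(\cdot,0) = \overline v^\delta \ge \underline v^\eps = u(\cdot,0)$, so the comparison principle for \eqref{main} gives $\hat u(\cdot,t) \ge u(\cdot,t)$ for all $t$, whence $\hat v \ge \lim_{t\to\infty} u(\cdot,t) = v_*$ everywhere. This contradicts $\hat v < v_*$ on the interior of $B_{R'}$, and the contradiction forces $v_1 = v_2$.

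In summary, the one genuinely delicate point is propagating the ground‑state property — above all the $L^2$ bound on the gradient — from $v_1,v_2$ to the intermediate stationary solution $v_*$; the rest is a routine combination of the comparison principle for \eqref{main} with two applications of Lemma \ref{l:unst}, exactly in the spirit of "as a corollary to this result'' preceding the statement.
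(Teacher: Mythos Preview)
Your overall strategy coincides with the paper's: contradict by flowing up from the subsolution $v_1+\eps\phi_0^R$ to an intermediate stationary solution $v_*$ with $v_1<v_*\le v_2$, show $v_*$ is again a ground state, then block with the supersolution $v_*-\delta\psi_0^{R'}$ from a second application of Lemma~\ref{l:unst}. (Your final step runs the flow downward from the supersolution; the paper simply uses it as a static barrier above $u(\cdot,t)$; the contradiction is the same.) Your argument for $V(v_*)\in L^1$ via $|V(v_*)|\le|V(v_2)|$ is exactly the paper's.

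The gap is in your verification that $|\nabla v_*|\in L^2$ when $f'(0)=0$. You invoke that $v_*$ is radially nonincreasing and that $v_2\le C|x|^{2-N}$, but neither is available: the corollary is stated for general ground states, not radial ones, and even if $v_1$ were radial the perturbed datum $v_1+\eps\phi_0^R$ need not be radially monotone; the pointwise decay bound on $v_2$ is not a consequence of Definition~\ref{d:gs} and is vacuous for $N\le 2$. The multiply-by-$v_*$-and-integrate argument does not close without these ingredients.

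The paper instead exploits energy dissipation. The subtlety is that $u(\cdot,t)$ itself may fail to lie in $H^1(\mathbb R^N)$, since $v_1$ need not be in $L^2$; but $w(\cdot,t):=u(\cdot,t)-v_1$ starts at $\eps\phi_0^R\in H^1\cap L^\infty$ and remains there, with its own energy $\widetilde E[w]:=\int\bigl(\tfrac12|\nabla w|^2+V(v_1+w)-V(v_1)-V'(v_1)w\bigr)\,dx$ nonincreasing along the flow. One checks $E[v_1+w]=E[v_1]+\widetilde E[w]$ (this uses $f(v_1)\in L^2$, which comes from $\nabla v_1\in H^1$), so $E[u(\cdot,t)]$ is finite and nonincreasing. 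Splitting $E[u(\cdot,t)]\ge\int_{B_R}\bigl(\tfrac12|\nabla u|^2+V(u)\bigr)+\int_{\mathbb R^N\setminus B_R}V(u)$ and bounding the tail below by $\min\{0,\int_{|x|>R_0}V(v_2)\,dx\}$ via the monotonicity of $V$ on $(0,\alpha)$, one passes $t\to\infty$ and then $R\to\infty$ to get a uniform bound on $\int|\nabla v_*|^2$. So it is energy monotonicity, not any pointwise decay of $v_2$, that propagates the $L^2$-gradient bound to $v_*$.
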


\begin{proof}
  By strong maximum principle, either $v_1 = v_2$ or $v_1 < v_2$ in
  all of $\mathbb R^N$. We argue by contradiction and assume the
  latter. Let $\underline v_1^\eps$ be the corresponding subsolution
  from Lemma \ref{l:unst} obtained from $v_1$, and choose $\eps > 0$
  so small that $\underline{v}_1^\eps < v_2$. Denote by
  $\underline u_1^\eps$ the classical solution of \eqref{main} with
  $\underline{v}_1^\eps$ as initial datum. Then by comparison principle
  we have $\underline u_1^\eps(x, t) < v_2(x)$ for all
  $x \in \mathbb R^N$ and $t \geq 0$. Existence of such a solution is
  standard. Furthermore, we claim that
  $\underline u_1^\eps(\cdot, t) - v_1 \in H^1(\mathbb R^N)$ for each
  $t \geq 0$, and \eqref{eq:dEdt} holds for $\underline u_1^\eps$.
  Indeed, let $w^\eps := \underline u_1^\eps - v_1$. Then
  $w^\eps(\cdot, 0) \in H^1(\mathbb R^N) \cap L^\infty(\mathbb R^N)$,
  and $w^\eps(x, t)$ solves
  \begin{align}
    \label{eq:wvxt}
    w^\eps_t = \Delta w^\eps + f(v_1 + w^\eps) - f(v_1).
  \end{align}
  Therefore, $w^\eps(x, t)$ satisfies the first half of the
  conclusions of Proposition \ref{p:exist} (cf. \cite[Chapter
  7]{lunardi}). In particular, \eqref{eq:wvxt} is an $L^2$ gradient
  flow generated by the energy
  \begin{align}
    \label{eq:tE}
    \widetilde E[w] := \int_{\mathbb R^N} \left( \frac12 |\nabla w|^2
    + V(v_1 + w) - V(v_1) - V'(v_1) w \right) dx,
  \end{align}
  easily seen to be well defined for all
  $w \in H^1(\mathbb R^N) \cap L^\infty(\mathbb R^N)$, and we have 
  \begin{align}
    \label{eq:dEdtw}
    {d \widetilde E[w^\eps(\cdot, t)] \over dt} = - \int_{\mathbb R^N}
    |w^\eps_t|^2 dx.
  \end{align}
  
  We claim that
  \begin{align}
    \label{eq:EEt}
    E[v_1 + w^\eps(\cdot, t)] = E[v_1] + \widetilde E[w^\eps(\cdot,
    t)],
  \end{align}
  for each $t \geq 0$, and thus by \eqref{eq:dEdtw} equation
  \eqref{eq:dEdt} holds for $\underline u_1^\eps$. Caution is needed
  here, since with our general assumptions on $f$ we have very little
  information about the decay of the ground states as
  $|x| \to \infty$. In particular, it is not a priori clear if
  $E[v_1 + w]$ is well defined for all
  $w \in H^1(\mathbb R^N) \cap L^\infty(\mathbb R^N)$. Hence we need
  to use some minimal regularity (at infinity) possessed by the ground
  states in the sense of Definition \ref{d:gs}.

  As was established in the proof of Lemma \ref{l:unst}, we have
  $\nabla v \in H^2(\mathbb R^N; \mathbb R^N)$ for every ground state
  $v$. Hence, by \eqref{stationary} we also have
  $f(v) \in L^2(\mathbb R^N)$.  Then, using Taylor formula it is easy
  to see that the integral of $V(v_1 + w)$ makes sense. Thus, we can
  write
  \begin{align}
    \label{eq:tEE}
    E[v_1 + w] - E[v_1] = \int_{\mathbb R^N} \left( \frac12 |\nabla
    w|^2 + \nabla v_1 \cdot \nabla w + V(v_1 + w) - V(v_1) \right) dx.
  \end{align}
  Again, using the fact that
  $\nabla v_1 \in H^2(\mathbb R^N; \mathbb R^N)$, we can integrate the
  second term in the right-hand side of \eqref{eq:tEE} by parts and
  use \eqref{stationary} to arrive at \eqref{eq:EEt}.

  Now, since $\underline v^\eps_1$ is a subsolution of \eqref{main}
  and by construction is a strict subsolution in $B_R(0)$, the
  function $\underline u_1^\eps(x, t)$ is strictly monotonically
  increasing in $t$ for each $x \in \mathbb R^N$. In particular,
  $v_1 < \underline u_1^\eps(\cdot, t)$ for each $t > 0$.  Since
  $\underline u_1^\eps(x, t)$ is bounded above for each
  $x \in \mathbb R^N$, by standard parabolic regularity
  $\underline u_1^\eps(\cdot, t)$ converges to a solution $v_3$ of
  \eqref{stationary} strongly in $C^1(\mathbb R^N)$ as $t \to \infty$.
  Again, by comparison principle $v_1 < v_3 < v_2$.

  We now show that $v_3$ is also a ground state. Indeed, by the
  decrease of energy we have for any $R > 0$
  \begin{align}
    \label{eq:E0BR}
    +\infty > E[\underline u_1^\eps(\cdot, 0)] \geq \int_{B_R(0)} \left(
    \frac12 |\nabla \underline u_1^\eps(x, t)|^2 + V(\underline
    u_1^\eps(x, t)) \right) dx + \int_{\mathbb R^N \backslash B_R(0)} V(\underline
    u_1^\eps(x, t)) \, dx. 
  \end{align}
  In view of the fact that $\underline u_1^\eps(x, t) < v_2(x)$ for
  all $x \in \mathbb R^N$, for every $R_0 > 0$ large enough we have
  $V(\underline u_1^\eps(x, t)) < \alpha$ for all $t \geq 0$ and all
  $|x| > R_0$. Recall that by our assumptions the function $V(u)$ is
  monotone for all $u \in (0, \alpha)$. Therefore, the last term in
  \eqref{eq:E0BR} can be bounded from below as follows:
  \begin{align}
    \label{eq:Vbelow}
    \int_{\mathbb R^N \backslash B_R(0)} V(\underline u_1^\eps(x, t)) \,
    dx
    \geq \min \left\{ 0, \  \int_{\mathbb R^N \backslash B_{R_0}(0)}
    V(v_2) \, dx \right\}, \qquad 
    \forall R > R_0.
  \end{align}
  Then, passing to the limit as $t \to \infty$ in \eqref{eq:E0BR}, we
  obtain
  \begin{align}
    \label{eq:ERVtlim}
    \int_{B_R(0)} \left(
    \frac12 |\nabla v_3|^2 + V(v_3) \right) dx \leq E[\underline
    u_1^\eps(\cdot, 0)] + \int_{\mathbb R^N \backslash B_{R_0}(0)}
    |V(v_2)| \, dx, 
  \end{align}
  for all $R > R_0$.  Furthermore, since
  $|V(v_3(x))| \leq |V(v_2(x))|$ for all $|x| > R_0$, passing to the
  limit as $R \to \infty$ in \eqref{eq:ERVtlim} and using Lebesgue
  monotone convergence theorem in the first term and Lebesgue
  dominated convergence theorem in the second term, we get
  $|\nabla v_3| \in L^2(\mathbb R^N)$ and
  $V(v_3) \in L^1(\mathbb R^N)$, so that $v_3$ is also a ground state.


  Finally, by Lemma \ref{l:unst} there exists $\delta > 0$
  sufficiently small and a supersolution $\overline v_3^\delta(x, t)$
  such that $\underline v_1^\eps < \overline v_3^\delta < v_3$.
  Therefore, by comparison principle we have
  $\underline u_1^\eps(x, t) < \overline v_3^\delta(x)$ for every
  $x \in \mathbb R^N$ and $t > 0$. But this contradicts the fact that
  $\underline u_1^\eps(\cdot, t) \to v_3$ uniformly as $t \to \infty$.
\end{proof}

We also establish strict positivity of the energy of ground states,
using a kind of Hamiltonian identity for \eqref{stationary} (see a
related discussion in \cite{cabre05}).

\begin{lemma}
  \label{l:vpos}
  Let $f \in C^1[0, \infty)$ and let $v$ be a ground state in the
  sense of Definition \ref{d:gs}. Then
  \begin{align}
    \label{eq:Evz}
    E[v] = {1 \over N} \int_{\mathbb R^N} |\nabla v|^2 \, dx > 0.
  \end{align}
\end{lemma}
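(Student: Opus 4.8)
The plan is to establish a Pohozaev-type identity for $v$ by using $x \cdot \nabla v$ as a multiplier. Since $v \in C^2(\mathbb R^N)$ solves \eqref{stationary}, multiplying by $x\cdot\nabla v$ and integrating over the ball $B_R := B_R(0)$ gives $\int_{B_R} (\Delta v)(x\cdot\nabla v)\,dx + \int_{B_R} f(v)(x\cdot\nabla v)\,dx = 0$. First I would rewrite the reaction term as $f(v)(x\cdot\nabla v) = x\cdot\nabla(-V(v))$ and integrate by parts, using $x\cdot\nu = R$ and $\operatorname{div} x = N$ on $\partial B_R$, to get $-R\int_{\partial B_R} V(v)\,dS + N\int_{B_R} V(v)\,dx$. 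For the diffusion term I would use the Rellich identity $(x\cdot\nabla v)\Delta v = \operatorname{div}\big[(x\cdot\nabla v)\nabla v\big] - |\nabla v|^2 - \tfrac12\, x\cdot\nabla|\nabla v|^2$ and integrate by parts once more, using that on $\partial B_R$ one has $x\cdot\nabla v = R\,\partial_\nu v$, to obtain $R\int_{\partial B_R}(\partial_\nu v)^2\,dS - \tfrac R2\int_{\partial B_R}|\nabla v|^2\,dS + \big(\tfrac N2 - 1\big)\int_{B_R}|\nabla v|^2\,dx$. Adding the two contributions yields an exact identity relating the bulk integrals of $|\nabla v|^2$ and $V(v)$ over $B_R$ to surface integrals over $\partial B_R$.

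The crux is to let $R\to\infty$ and kill the boundary terms along a suitable sequence. Since Definition \ref{d:gs} gives $|\nabla v|\in L^2(\mathbb R^N)$ and $V(v)\in L^1(\mathbb R^N)$, polar coordinates show that $R\mapsto \int_{\partial B_R}\big(|\nabla v|^2 + |V(v)|\big)\,dS$ is integrable on $(0,\infty)$; hence $\liminf_{R\to\infty} R\int_{\partial B_R}\big(|\nabla v|^2 + |V(v)|\big)\,dS = 0$, for otherwise this quantity would be bounded below by a positive constant for all large $R$, forcing $\int^\infty \tfrac{dR}{R}<\infty$, a contradiction. Picking $R_n\to\infty$ realizing this $\liminf$ and using $0\le(\partial_\nu v)^2\le|\nabla v|^2$, all three surface integrals (multiplied by $R_n$) tend to $0$. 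Meanwhile, by monotone convergence $\int_{B_{R_n}}|\nabla v|^2 \to \int_{\mathbb R^N}|\nabla v|^2$ and by dominated convergence $\int_{B_{R_n}}V(v)\to\int_{\mathbb R^N}V(v)$, both limits being finite by Definition \ref{d:gs}. Passing to the limit gives the Pohozaev identity $\big(\tfrac N2 - 1\big)\int_{\mathbb R^N}|\nabla v|^2\,dx + N\int_{\mathbb R^N}V(v)\,dx = 0$, valid in every dimension $N\ge 1$.

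It then remains to do the algebra: $E[v] = \tfrac12\int_{\mathbb R^N}|\nabla v|^2\,dx + \int_{\mathbb R^N}V(v)\,dx$ is a well-defined finite number, and substituting $\int V(v) = -\tfrac{N-2}{2N}\int|\nabla v|^2$ from the Pohozaev identity gives $E[v] = \big(\tfrac12 - \tfrac{N-2}{2N}\big)\int|\nabla v|^2 = \tfrac1N\int_{\mathbb R^N}|\nabla v|^2\,dx$. Strict positivity is immediate, since $v>0$ and $v(x)\to 0$ as $|x|\to\infty$ force $v$ to be non-constant, whence $\int_{\mathbb R^N}|\nabla v|^2\,dx>0$.

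The main obstacle — indeed essentially the only non-routine point — is the justification that the boundary terms vanish along a sequence $R_n\to\infty$; this is exactly where the integrability conditions built into Definition \ref{d:gs} are used, rather than pointwise decay rates for $v$ and $\nabla v$, which are not available under the present weak hypotheses on $f$. Everything else (the two integrations by parts and the concluding arithmetic) is routine.
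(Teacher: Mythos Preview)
Your proof is correct, but it takes a different route from the paper's. You use the classical Pohozaev multiplier $x\cdot\nabla v$ and integrate over balls $B_R$, handling the boundary terms by extracting a sequence $R_n\to\infty$ along which $R_n\int_{\partial B_{R_n}}(|\nabla v|^2+|V(v)|)\,dS\to 0$ from the $L^1$-integrability in Definition~\ref{d:gs}. The paper instead multiplies \eqref{stationary} by a single partial derivative $v_i=\partial_{x_i}v$, cuts off with $\chi_R$, and integrates over a half-space $\{x_i<\xi\}$; letting $R\to\infty$ yields the slice identity $\int_{\{x_i=\xi\}}v_i^2\,d\mathcal H^{N-1}=\int_{\{x_i=\xi\}}\big(\tfrac12|\nabla v|^2+V(v)\big)\,d\mathcal H^{N-1}$ for a.e.\ $\xi$, which upon integrating in $\xi$ gives $E[v]=\int_{\mathbb R^N}v_i^2\,dx$ for each $i$, and summing over $i$ produces \eqref{eq:Evz}. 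Both arguments lean on exactly the same integrability hypotheses to dispose of the terms at infinity. Your approach is arguably the more standard one and gets to the answer in one step; the paper's approach yields the additional byproduct (recorded in the remark following the lemma) that $\|n\cdot\nabla v\|_{L^2(\mathbb R^N)}$ is independent of the direction $n\in\mathbb S^{N-1}$, a fact consistent with radial symmetry that is not visible from the Pohozaev identity alone.
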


\begin{proof}
  Let $v_i := \partial v / \partial x_i$, $i = 1, \ldots, N$, and for
  $R > 0$ let $\chi_R \in C^\infty_c(\mathbb R^N)$ be a cutoff
  function such that $0 \leq \chi_R \leq 1$,
  $\| \nabla \chi_R \|_{L^\infty(\mathbb R^N)} \leq C$ for some
  $C > 0$ independent of $R$, $\chi_R(x) = 1$ for all $|x| < R$ and
  $\chi_R(x) = 0$ for all $|x| > R + 1$. We multiply
  \eqref{stationary} by $\chi_R v_i$ and integrate over all
  $x \in \mathbb R^N$ such that $x_i < \xi$, for a fixed
  $\xi \in \mathbb R$. With the help of the fact that
  $\nabla v \in H^2(\mathbb R^N; \mathbb R^N)$ demonstrated in the
  proof of Lemma \ref{l:unst}, this yields, after a number of
  integrations by parts,
  \begin{align}
    \label{eq:vxi}
    0 & = \int_{ \{ x_i = \xi \} } \chi_R \left( v_i^2 - V(v) \right)
        d \mathcal H^{N-1}(x)  \notag \\
      & \qquad  - \int_{ \{ x_i < \xi \} } \chi_R \nabla v \cdot \nabla v_i
        \, dx - \int_{ \{ x_i < \xi \} } v_i \nabla \chi_R \cdot
        \nabla v \, dx +  \int_{ \{ x_i < \xi \} } V(v) {\partial
        \chi_R \over \partial x_i} dx \notag \\
      & =  \int_{ \{ x_i = \xi \} } \chi_R \left( v_i^2 - \frac12
        |\nabla v|^2 - V(v) \right)
        d \mathcal H^{N-1}(x)  \notag \\
      & \qquad  - \int_{ \{ x_i < \xi \} } v_i \nabla \chi_R \cdot
        \nabla v \, dx +  \int_{ \{ x_i < \xi \} } \left( \frac12
        |\nabla v|^2 + V(v) \right)  {\partial
        \chi_R \over \partial x_i} dx.
  \end{align}
  Since $\nabla \chi_R$ is uniformly bounded and supported on $\mathbb
  R^N \backslash B_R(0)$, by integrability of $|\nabla v|^2$ and
  $V(v)$ the last line in \eqref{eq:vxi} goes to zero when $R \to
  \infty$. Therefore, by Fubini's theorem and Lebesgue dominated
  convergence theorem we have for a.e. $\xi \in \mathbb R$
  \begin{align}
    \label{eq:Exi}
    \int_{ \{ x_i = \xi \} } v_i^2 
    d \mathcal H^{N-1}(x)  = \int_{ \{ x_i = \xi \} } \left( \frac12
    |\nabla v|^2 + V(v) \right)
    d \mathcal H^{N-1}(x).
  \end{align}
  Finally, integrating \eqref{eq:Exi} over all $\xi \in \mathbb R$, we
  get, again, by Fubini's theorem,
  \begin{align}
    \label{eq:Exi2}
    E[v] = \int_{\mathbb R^N} v_i^2 
    \, dx.  
  \end{align}
  In view of the fact that this identity holds for each
  $i = 1, \ldots, N$, summing up over all $i$ yields the statement.
\end{proof}

\begin{rmk}
  We note that by the argument in the proof of Lemma \ref{l:vpos}, for
  every ground state $v$ the function
  $\varphi(n) := \| n \cdot \nabla v \|_{L^2(\mathbb R^N)}$ is
  independent of $n$, for every $n \in \mathbb S^{N-1}$. This is
  consistent with radial symmetry of solutions of \eqref{stationary}
  known for many specific choices of $f$.
\end{rmk}

We will need the following simple non-existence result.

\begin{lemma}
  \label{l:gsnon}
  Let $f \in C^1[0, \infty)$ and suppose that there exist
  $0 \leq \alpha < \beta$ such that $f(u) \geq 0$ for all
  $u \in (\alpha, \beta)$. Then \eqref{stationary} has no non-constant
  radial symmetric-decreasing solutions with range in
  $(\alpha, \beta)$ whenever $N \leq 2$.
\end{lemma}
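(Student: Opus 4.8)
The plan is to argue directly with the radial ODE, exploiting the monotonicity of $r^{N-1}v'(r)$ that is forced by the sign condition on $f$.

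First I would record the structure of a putative solution. Suppose $v$ is a non-constant radial symmetric-decreasing solution of \eqref{stationary} with range contained in $(\alpha,\beta)$. Then $v(x) = v_\mu(r)$ with $r = |x|$ and $\mu = v(0)$ satisfies \eqref{eq:vmu}; in particular $v'(0) = 0$, and since $v$ is non-constant it is strictly decreasing, so $v(r) \in (\alpha,\beta)$ and hence $f(v(r)) \ge 0$ for every $r \ge 0$. Put $w(r) := r^{N-1} v'(r)$. Then $w(0) = 0$ and $w'(r) = r^{N-1}\bigl(v'' + \tfrac{N-1}{r}v'\bigr) = -r^{N-1} f(v(r)) \le 0$, so $w$ is non-increasing.

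The core of the proof is then a short alternative. If $f(v(r)) = 0$ for all $r \ge 0$, then $w \equiv 0$, so $v' \equiv 0$ and $v$ is constant — a contradiction. Otherwise $f(v(r_0)) > 0$ for some $r_0$, and by continuity of $f\circ v$ there are $0 \le r_1 < r_2$ and $m > 0$ with $f(v(r)) \ge m$ on $[r_1, r_2]$; then for every $r \ge r_2$,
\[
w(r) \le w(r_2) = -\int_0^{r_2} s^{N-1} f(v(s))\,ds \le -\int_{r_1}^{r_2} m\, s^{N-1}\,ds = -c < 0, \qquad c := \tfrac{m}{N}\,(r_2^{N} - r_1^{N}).
\]
Hence $v'(r) = w(r)\, r^{1-N} \le -c\, r^{1-N}$ for $r \ge r_2$, and integrating, $v(r) \le v(r_2) - c\int_{r_2}^r s^{1-N}\,ds$. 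For $N \le 2$ the integral $\int_{r_2}^{\infty} s^{1-N}\,ds$ diverges, so $v(r) \to -\infty$ as $r \to \infty$, contradicting $v(r) > \alpha \ge 0$. This excludes such solutions.

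I do not expect a genuine obstacle here: the only inputs beyond the elementary integration above are the reduction to the radial ODE \eqref{eq:vmu} (with $v'(0)=0$) and the fact, recalled in the paper after \eqref{eq:vmu}, that a solution of \eqref{eq:vmu} is either constant or strictly decreasing. It is worth emphasizing that the dimensional threshold $N \le 2$ enters precisely through the non-integrability of $r^{1-N}$ at infinity, which is why the same argument yields nothing for $N \ge 3$.
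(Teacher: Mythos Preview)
Your argument is correct. The paper's proof follows the same underlying idea---use the sign condition $f\ge 0$ on the range of $v$ to force the radial profile below $\alpha$---but organizes it differently: it treats $N=1$ by direct concavity of $v_\mu(r)$ and $N=2$ by the change of variables $s=\ln r$, which again yields concavity. Your use of $w(r)=r^{N-1}v'(r)$ unifies the two cases in a single computation and makes transparent exactly where $N\le 2$ enters (the divergence of $\int^\infty r^{1-N}\,dr$); the paper's case split is slightly terser but hides this dimensional dichotomy behind the logarithmic substitution. Both routes are elementary and equally short.
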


\begin{proof}
  The proof is elementary via the ordinary differential equation in
  \eqref{eq:vmu}. Let $v_\mu$ be a solution of \eqref{eq:vmu}
  satisfying $\alpha < v_\mu < \beta$. If $N = 1$, then $v_\mu(r)$ is
  concave for all $r > 0$. Since it is also strictly decreasing, we
  will necessarily have $v_\mu(r_0) = \alpha$ for some $r_0 > 0$,
  contradicting our assumption that $v_\mu(r)$ solves the equation for
  all $r > 0$ with $\alpha < v_\mu(r) < \beta$.

  If, on the other hand, $N = 2$, then with $s = \ln r$ as a new
  variable the solution of \eqref{eq:vmu} obeys (with a slight abuse
  of notation, we still denote the solution as $v_\mu(s)$)
  \begin{align}
    \label{eq:vmus}
    v_\mu''(s)+ e^{2 s} f(v_\mu(s)) = 0, \qquad v_\mu'(s) \leq 0, \quad
    \alpha < v_\mu(s) < \beta, \quad -\infty < s < +\infty.
  \end{align}
  Once again, $v_\mu(s)$ is concave and strictly decreasing, which is
  a contradiction.
\end{proof}

To conclude this section, we state the Poincar\'e type inequality
characterizing the exponentially weighted Sobolev spaces, which is a
straightforward generalization of \cite[Lemma 2.2]{mn:cms08} to the
whole space.

\begin{lemma}
  \label{l:poinc}
  Let $c > 0$ and let $u \in H^1_c(\mathbb R^N)$. Then for every open
  set $\Omega \subseteq \mathbb R^{N-1}$ there holds
  \begin{align}
    \label{eq:poinc}
    \int_R^\infty \int_\Omega e^{cz} u_z^2 \, dy \, dz 
    \geq {c^2 \over 4} 
    \int_R^\infty \int_\Omega  e^{cz} u^2 \, dy \, dz,
  \end{align}
  for every $R \in [-\infty, +\infty)$.
\end{lemma}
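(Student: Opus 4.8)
The plan is to reduce \eqref{eq:poinc} to a one-dimensional weighted Hardy-type inequality in the variable $z$ and then integrate over the transverse variable $y$. By the very definition of $H^1_c(\mathbb R^N)$ as the completion of $C^\infty_c(\mathbb R^N)$ with respect to the norm \eqref{eq:H1cnorm}, it suffices to establish \eqref{eq:poinc} for $u \in C^\infty_c(\mathbb R^N)$. Indeed, if $u_n \in C^\infty_c(\mathbb R^N)$ converges to $u$ in $H^1_c(\mathbb R^N)$, then $u_n \to u$ in $L^2_c(\mathbb R^N)$ and $\partial_z u_n \to \partial_z u$ in $L^2_c(\mathbb R^N)$; since restriction of an $L^2_c(\mathbb R^N)$ function to the set $\{z > R\} \times \Omega$ is continuous in the $L^2_c$ norm, both sides of \eqref{eq:poinc} pass to the limit, and the inequality for $u$ follows.

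For $u \in C^\infty_c(\mathbb R^N)$ fixed, fix $y \in \Omega$ and set $w(y, z) := e^{cz/2} u(y, z)$, which is a smooth, compactly supported function of $z$. A direct computation gives $e^{cz} u_z^2 = \bigl(w_z - \tfrac{c}{2} w\bigr)^2 = w_z^2 - \tfrac{c}{2}(w^2)_z + \tfrac{c^2}{4} w^2$ and $e^{cz} u^2 = w^2$, so that
\[
  \int_R^\infty e^{cz} u_z^2 \, dz - {c^2 \over 4} \int_R^\infty e^{cz} u^2 \, dz
  = \int_R^\infty w_z^2 \, dz - {c \over 2}\int_R^\infty (w^2)_z \, dz
  = \int_R^\infty w_z^2 \, dz + {c \over 2}\, w^2(y, R) \ \geq \ 0,
\]
where we used that $w(y, z) \to 0$ as $z \to +\infty$, together with $w(y,z) \to 0$ as $z \to -\infty$ in the case $R = -\infty$ (so that the boundary term is simply absent), both a consequence of compact support. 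Both terms on the right-hand side are nonnegative, which proves the one-dimensional inequality for each $y$.

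Integrating this inequality over $y \in \Omega$ and invoking Fubini's theorem yields \eqref{eq:poinc} for $u \in C^\infty_c(\mathbb R^N)$, and hence, by the density argument above, for all $u \in H^1_c(\mathbb R^N)$. There is no genuine obstacle here; the only points requiring a word of care are the approximation step, which is immediate because $H^1_c(\mathbb R^N)$ is defined by completion, and the vanishing of the boundary term $w^2(y,R)$ when $R = -\infty$, which again follows from compact support of the approximants. This is precisely the argument of \cite[Lemma 2.2]{mn:cms08}, carried out with the additional transverse variable $y \in \mathbb R^{N-1}$ handled trivially by Fubini's theorem.
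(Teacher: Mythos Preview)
Your proof is correct and follows precisely the approach the paper indicates: the paper does not give its own proof but simply notes that the lemma is a straightforward generalization of \cite[Lemma~2.2]{mn:cms08} to the whole space, and your substitution $w = e^{cz/2}u$ together with the density argument is exactly that standard computation carried over with an extra Fubini in the transverse variable~$y$.
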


\section{Propagation: Proof of Theorems \ref{t:radfront}, \ref{t:igni}
  and \ref{t:omls}}
\label{s:prop}

We begin with the proof of Theorem \ref{t:igni}, which uses an
adaptation of the arguments from \cite{M2004,mn:cms08,mz:nodea13} to
the problem in $\mathbb R^N$. The key notion used in the proof is that
of a wave-like solution. 

\begin{defn}
  \label{d:wl}
  We call the solution $u(x, t)$ of \eqref{main} and \eqref{initial}
  {\em wave-like}, if $u(\cdot, T) \in H^1_c(\mathbb R^N)$ and
  $\Phi_c[u(\cdot,T)]<0$ for some $c>0$ and $T \geq 0$.
\end{defn}

\noindent We want to show that for radial symmetric-decreasing solutions of
\eqref{main} the wave-like property implies propagation whenever
$f'(0) \leq 0$.

Our first lemma connects the wave-like property of solutions with the
sign of their energy.

\begin{lemma}
  \label{l:wl}
  Let $u(x, t)$ be the solution of \eqref{main} satisfying
  \eqref{initial}. Suppose that $\phi \in L^2_{c_0}(\mathbb R^N)$ for
  some $c_0 > 0$ and suppose that there exists $T \geq 0$ such that
  $E[u(\cdot,T)]<0$. Then $u(x, t)$ is wave-like.
\end{lemma}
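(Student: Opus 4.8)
The plan is to exploit the elementary fact that the exponentially weighted energy $\Phi_c$ of \eqref{weighted} degenerates to the ordinary energy $E$ of \eqref{eq:E} as $c \to 0^+$, and to observe that the hypothesis $\phi \in L^2_{c_0}(\mathbb R^N)$ supplies precisely the weighted integrability needed to justify this passage to the limit on the profile $w := u(\cdot, T)$. First I would dispose of the case $T = 0$: then $E[\phi] < 0$ forces $\phi \in H^1(\mathbb R^N)$, and since $t \mapsto E[u(\cdot, t)]$ is finite, continuously differentiable, and nonincreasing on $(0,\infty)$ by \eqref{eq:dEdt}, with $E[u(\cdot,t)] \to E[\phi]$ as $t \to 0^+$ (standard for the $L^2$ gradient flow), we have $E[u(\cdot, t_0)] < 0$ for some $t_0 > 0$; relabeling, we may assume $T > 0$ and set $w := u(\cdot, T)$, so that $0 \le w \le M$ with $M := \max\{1, \|\phi\|_{L^\infty(\mathbb R^N)}\}$ by the comparison principle and $w \in H^2(\mathbb R^N) \cap L^\infty(\mathbb R^N)$ by Proposition \ref{p:exist}.

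Next I would promote the weighted integrability of $\phi$ to $w$. For every $c \in (0, c_0]$ one has $e^{cz} \le 1 + e^{c_0 z}$ for all $z \in \mathbb R$, hence $\|\phi\|_{L^2_c(\mathbb R^N)}^2 \le \|\phi\|_{L^2(\mathbb R^N)}^2 + \|\phi\|_{L^2_{c_0}(\mathbb R^N)}^2 < \infty$, so that $\phi \in L^2_c(\mathbb R^N) \cap L^\infty(\mathbb R^N)$ for all such $c$. The second half of Proposition \ref{p:exist} then yields $w = u(\cdot, T) \in H^2_c(\mathbb R^N) \subseteq H^1_c(\mathbb R^N)$ for every $c \in (0, c_0]$; in particular the profile is already admissible for the wave-like test in Definition \ref{d:wl}, and it only remains to choose $c$ making $\Phi_c[w]$ negative.

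To that end I would record the pointwise bound on the energy density. Setting $C_1 := \max_{s \in [0, M]} |f'(s)|$ and using $f(0) = 0$, the mean value theorem gives $|f(s)| \le C_1 s$ and hence $|V(s)| \le \tfrac12 C_1 s^2$ for $s \in [0, M]$; therefore, with $g := \tfrac12 |\nabla w|^2 + V(w)$, we have $|g| \le \tfrac12 |\nabla w|^2 + \tfrac12 C_1 w^2$ pointwise, so that $(1 + e^{c_0 z})|g| \in L^1(\mathbb R^N)$ because $w \in H^1(\mathbb R^N) \cap H^1_{c_0}(\mathbb R^N)$. Consequently $g \in L^1(\mathbb R^N)$ with $\int_{\mathbb R^N} g\,dx = E[w]$, and for each $c \in (0,c_0]$ the integral $\Phi_c[w] = \int_{\mathbb R^N} e^{cz} g\,dx$ converges absolutely. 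Since $e^{cz} g(x) \to g(x)$ pointwise as $c \to 0^+$ while $|e^{cz} g| \le (1 + e^{c_0 z})|g| \in L^1(\mathbb R^N)$ uniformly for $c \in (0,c_0]$, dominated convergence gives $\Phi_c[w] \to E[w] < 0$; hence $\Phi_c[w] < 0$ for some $c \in (0, c_0]$, and together with $w \in H^1_c(\mathbb R^N)$ this makes $u(x,t)$ wave-like.

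The argument is essentially routine, and there is no deep obstacle: the one point that actually uses the structure of the problem is that the weighted localization of the initial data is not destroyed by the flow — Proposition \ref{p:exist} turns $\phi \in L^2_{c_0}(\mathbb R^N)$ into $w \in H^1_{c_0}(\mathbb R^N)$ — and this, combined with the quadratic bound $|V(s)| \le \tfrac12 C_1 s^2$ near $s=0$ (which holds because $f(0)=0$ and $f \in C^1$), produces the $c_0$-weighted dominating function needed for the limit $c \to 0^+$. The only mild technicality is the case $T=0$, handled above via continuity of the energy along the gradient flow.
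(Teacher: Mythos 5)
Your argument is correct and takes essentially the same route as the paper: both proofs deduce $\Phi_c[u(\cdot,T)]<0$ for some small $c\in(0,c_0)$ by showing that $\Phi_c[u(\cdot,T)]\to E[u(\cdot,T)]<0$ as $c\to 0^+$, using the $H^1\cap H^1_{c_0}$ regularity of $u(\cdot,T)$ supplied by Proposition~\ref{p:exist} to control the weighted tails. You package the limit as a dominated-convergence statement (with dominating function $(1+e^{c_0 z})(\tfrac12|\nabla w|^2+\tfrac12 C_1 w^2)$), whereas the paper carries it out by an explicit $\eps$--$R$ truncation of the energy density into an exterior piece made small uniformly in $c\in(0,c_0)$ and an interior piece made negative by decreasing $c$; these are two implementations of the same idea.
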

\begin{proof}
  This lemma is a multidimensional extension of \cite[Lemma
  3.2]{mz:nodea13} to the general nonlinearities in \eqref{bistable}.
  By Proposition \ref{p:exist} we have
  $u(\cdot, T) \in H^1(\mathbb R^N) \cap H^1_{c_0}(\mathbb R^N) \cap
  L^\infty(\mathbb R^N)$,
  so, in view of \eqref{bistable}, for every $\eps > 0$ there exists
  $R > 0$ such that
  \begin{align}
    \label{Repso1}
    & \int_{ \{ |x| > R \cap \{z > 0 \}\}  } e^{c_0z} \left( \frac12 |\nabla
      u(x, T)|^2 + V^+(u(x, T)) \right) dx < {\eps \over 4}, \\
    & \int_{ \{ |x| > R \cap \{z < 0 \}\}  } \left( \frac12 |\nabla
      u(x, T)|^2 + V^+(u(x, T)) \right) dx < {\eps \over 4}, \\
  \end{align}
  where $V^+(u) := \max \{ V(u), 0 \}$. Hence
  \begin{align}
    \label{Repso2}
    \int_{ \{ |x| > R\}  } e^{cz} \left( \frac12 |\nabla
    u(x, T)|^2 + V^+(u(x, T)) \right) dx < {\eps \over 2}, 
  \end{align}
  for every $c \in (0, c_0)$. 

  Possibly increasing the value of $R$, we also have
  \begin{align}
    \int_{ \{ |x| < R\} } \left( \frac12 |\nabla
    u(x, T)|^2 + V(u(x, T)) \right) dx < -\eps, 
  \end{align}
  provided that $\eps$ is small enough. Therefore, it is possible to
  choose $c \in (0, c_0)$ sufficiently small such that 
  \begin{align}
    \int_{ \{ |x| < R\} } e^{cz} \left( \frac12 |\nabla
    u(x, T)|^2 + V(u(x, T)) \right) dx < -{\eps \over 2}. 
  \end{align}
  Combining this with \eqref{Repso2} yields $\Phi_c[u(\cdot, T)] < 0$,
  proving the claim.
\end{proof}

\begin{rmk}
  \label{r:wl}
  If $u(x, t)$ is a solution of \eqref{main} satisfying
  \eqref{initial} that is wave-like, then we also have
  $\Phi_c[u(\cdot, t)] < 0$ for all $t \geq T$.
\end{rmk}

\begin{proof}
  Since $u(\cdot, T) \in H^1_c(\mathbb R^N)$ and
  $\Phi_c[u(\cdot, T)] < 0$, by Proposition \ref{p:exist} we have that
  \eqref{eq:dPhicdt} holds. Hence, if $\tilde u$ is defined in
  \eqref{eq:utilde}, we have
  $\Phi_c[u(\cdot, t)] = e^{c^2 t} \Phi_c[\tilde u(\cdot, t)] \leq
  e^{c^2 t} \Phi_c[\tilde u(\cdot, T)] = e^{c^2 (t - T)}
  \Phi_c[u(\cdot, T)] < 0$.
\end{proof}

We next show that the level sets of radial symmetric-decreasing
wave-like solutions propagate with positive speed. For such solutions,
the leading and the trailing edges defined in \eqref{eq:Rdp} and
\eqref{eq:Rdm} coincide:
$R_\delta^+(t) = R_\delta^-(t) =: R_\delta(t)$. We use the convention
that $R_\delta(t) := 0$ if $u(x, t) < \delta$ for all
$x \in \mathbb R^N$.

\begin{lemma}
  \label{spread}
  Let $f'(0) \leq 0$ and assume that \eqref{negpotential} holds. Let
  $u(x, t)$ be the solution of \eqref{main} satisfying \eqref{initial}
  and (SD), and suppose $u(\cdot, T) \in H^1_c(\mathbb R^N)$ and
  $\Phi_c[u(\cdot,T)]<0$ for some $c>0$ and $T \geq 0$.  Then for
  every $\delta \in (0, 1)$ and every $c' \in (0, c)$ there is
  $R_0 \in \mathbb R$ such that
  \begin{align}
    \label{eq:Rd}
    R_\delta(t) > c' t + R_0,
  \end{align}
  for all $t \geq 0$.
\end{lemma}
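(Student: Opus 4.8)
The plan is to argue by contradiction. If no such $R_0$ existed, then, since $R_\delta(t)\ge 0$, negating the conclusion would yield a sequence $t_n\to\infty$ with $R_\delta(t_n)-c't_n\to-\infty$. I would pass to the frame moving with speed $c$, setting $\tilde u(y,z,t):=u(y,z+ct,t)$. Taking $u(\cdot,T)$ as initial datum at time $T$, Proposition~\ref{p:exist} gives $\tilde u(\cdot,t)\in H^1_c(\mathbb R^N)$ for $t>T$, and by \eqref{eq:dPhicdt} the function $t\mapsto\Phi_c[\tilde u(\cdot,t)]$ is non-increasing, so $\Phi_c[\tilde u(\cdot,t)]\le\Phi_c[\tilde u(\cdot,T)]=e^{-c^2T}\Phi_c[u(\cdot,T)]=:-\gamma<0$ for all $t\ge T$. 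Since $u(\cdot,t_n)$ satisfies (SD), the open set $\{\tilde u(\cdot,t_n)>\delta\}$ is the ball $B_{R_\delta(t_n)}\big((0,-ct_n)\big)$; for $n$ large (so that $R_\delta(t_n)<c't_n$) it lies in $\{z\le Z_n\}$ with $Z_n:=(c'-c)t_n\to-\infty$, and in particular $\tilde u(\cdot,t_n)\le\delta$ on the half-space $\{z>Z_n\}$.

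Next I would reduce to small $\delta$. It suffices to treat $\delta$ so small that $V(u)\ge-\tfrac{c^2}{8}u^2$ on $[0,\delta]$; since $f'(0)\le 0$ gives $V(u)=-\tfrac12 f'(0)u^2+o(u^2)$, this holds for all $\delta\le\delta_0$ with some $\delta_0>\theta_0$ (using $V\ge 0$ on $[0,\theta^*]$ in the bistable case and $V\equiv 0$ on $[0,\theta_0]$ in the ignition case). Granting the assertion for such $\delta_0$ with $c'$ replaced by an auxiliary $c''\in(c',c)$, the solution exceeds $\delta_0>\theta_0$ on balls of radius $c''t+O(1)\to\infty$, hence ignites and converges to $1$ locally uniformly by the standard mechanism for wide super-threshold data (here \eqref{negpotential} is used); comparison with the radial travelling wave of Proposition~\ref{p:tw} then yields $R_\delta(t)>c't+R_0$ for every $\delta\in(0,1)$, just as in the lower bound of Remark~\ref{r:lb}.

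For $\delta\le\delta_0$ I would split $\Phi_c[\tilde u(\cdot,t_n)]=I_n^++I_n^-$, the integrals over $\{z>Z_n\}$ and $\{z\le Z_n\}$. On $\{z>Z_n\}$ one has $\tilde u(\cdot,t_n)\le\delta$, hence $V(\tilde u)\ge-\tfrac{c^2}{8}\tilde u^2$, and Lemma~\ref{l:poinc} applied on that half-space with $\Omega=\mathbb R^{N-1}$ gives $\int_{\{z>Z_n\}}e^{cz}\tilde u_z^2\,dx\ge\tfrac{c^2}{4}\int_{\{z>Z_n\}}e^{cz}\tilde u^2\,dx$, so that
$$I_n^+\ \ge\ \int_{\{z>Z_n\}}e^{cz}\Big(\tfrac12\tilde u_z^2-\tfrac{c^2}{8}\tilde u^2\Big)\,dx\ \ge\ 0.$$
On $\{z\le Z_n\}$ one has $e^{cz}\le e^{cZ_n}=e^{c(c'-c)t_n}$; the ball $B_{R_\delta(t_n)}((0,-ct_n))=\{\tilde u(\cdot,t_n)>\delta\}$ lies in this region, where the integrand is bounded below by $-\Lambda$ with $\Lambda:=\sup_{[0,M]}|V|$, $M:=\max\{1,\|\phi\|_{L^\infty(\mathbb R^N)}\}$, while on the complement $\tilde u\le\delta$ and the integrand is $\ge-\tfrac{c^2}{8}\tilde u^2$; hence
$$I_n^-\ \ge\ -e^{c(c'-c)t_n}\Big(\Lambda\,C_N\,R_\delta(t_n)^N+\tfrac{c^2}{8}\,\|u(\cdot,t_n)\|_{L^2(\mathbb R^N)}^2\Big).$$
It then remains to bound $R_\delta(t)$ and $\|u(\cdot,t)\|_{L^2(\mathbb R^N)}$ polynomially in $t$. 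Since $u(\cdot,T)\in H^1_c(\mathbb R^N)$ is radially non-increasing, a computation in polar coordinates gives $u(x,T)\le Ce^{-\lambda_0|x|}$ for $|x|\gg 1$ with, say, $\lambda_0=c/4$; choosing a KPP nonlinearity $\tilde f\ge f$ on $[0,2M]$ and a monotone travelling front $W_\sigma$ of it connecting $2M$ to $0$ at a speed $\sigma$ large enough that its spatial decay rate is $<\lambda_0$, one checks that $\bar u(x,t):=W_\sigma\big(|x|-\sigma(t-T)-\rho\big)$ is a radial supersolution of \eqref{main} with $\bar u(\cdot,T)\ge u(\cdot,T)$ for suitable $\rho$; consequently $u(x,t)\le W_\sigma(|x|-\sigma(t-T)-\rho)$, so $R_\delta(t)=O(t)$ and $\|u(\cdot,t)\|_{L^2(\mathbb R^N)}^2=O(t^N)$. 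Therefore $I_n^-\to 0$, so $\Phi_c[\tilde u(\cdot,t_n)]\ge I_n^-\to 0$, contradicting $\Phi_c[\tilde u(\cdot,t_n)]\le-\gamma<0$.

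The step I expect to be the main obstacle is the control of $I_n^-$. Without the (SD) structure, the region where $\tilde u$ fails to be small — hence where $V$ may be negative — could carry infinite $e^{cz}$-weighted measure; it is essential that (SD) confines this region to a ball whose radius grows only linearly in $t_n$ (which is where the exponential spatial decay forced by $u(\cdot,T)\in H^1_c$ and the comparison with the fast travelling supersolution come in) and that $c>c'$, so that $e^{c(c'-c)t_n}$ decays faster than any power of $t_n$. The other delicate point is passing from small $\delta$ to all $\delta\in(0,1)$, handled by the ignition/propagation bootstrap indicated above.
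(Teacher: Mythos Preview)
Your argument shares the core ingredients with the paper---the monotone decrease of $\Phi_c$ in the moving frame, the Poincar\'e inequality of Lemma~\ref{l:poinc} on regions where the solution lies below a threshold (your $\delta_0$ is exactly the paper's $\theta_c:=\inf\{u>0:V(u)+\tfrac{c^2}{8}u^2<0\}$), and a bootstrap to all $\delta\in(0,1)$---but the execution differs. The paper proceeds \emph{directly}, not by contradiction: it splits $\mathbb R^N$ into the half-infinite cylinder $\{|y|<R_{\theta_c}(t)\}\cap\{z<R_{\theta_c}(t)\}$ and its two complements (where $u<\theta_c$ by (SD)), applies Lemma~\ref{l:poinc} on the latter two, and bounds the cylinder contribution crudely by $c^{-1}C_N V(1)\,R_{\theta_c}^{N-1}(t)\,e^{cR_{\theta_c}(t)}$. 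This yields the self-closing inequality $R_{\theta_c}(t)+\tfrac{N-1}{c}\ln R_{\theta_c}(t)\ge ct+\text{const}$, forcing $R_{\theta_c}(t)>c't$ for large $t$ with no a~priori bound on $R_\delta$ or $\|u(\cdot,t)\|_{L^2}$. The cylinder geometry is what buys this: the ``bad'' region carries weight $\sim R_{\theta_c}^{N-1}e^{cR_{\theta_c}}$, which feeds back into the inequality, whereas your half-space split produces a term $e^{cZ_n}\cdot O(t_n^N)$ that you must then kill with the extra KPP supersolution.

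One point does need repair. Your bootstrap to all $\delta\in(0,1)$ appeals to Remark~\ref{r:lb}, but that remark (through Theorem~\ref{t:radfront}) is itself proved \emph{using} Lemma~\ref{spread}, so the citation is circular; and Proposition~\ref{p:tw} gives only a one-dimensional wave, not a radial one, so there is no direct ``comparison with the radial travelling wave'' in $N\ge 2$. The paper's bootstrap avoids this by working in the frame moving at speed $c'$: since $R_{\theta_c}(t)-c't\to\infty$ (apply the first step with an auxiliary $c''\in(c',c)$), one has $\hat u(\cdot,t):=u(\cdot+(0,c't),t)>\theta_c$ on any fixed ball $B_R(0)$ for large $t$; the solution of \eqref{main} on $B_R(0)$ with boundary and initial value $\theta_c$ is then a monotone-in-$t$ subsolution converging to a stationary profile that tends to $1$ as $R\to\infty$, hence $\hat u\to 1$ locally uniformly, i.e.\ $R_\delta(t)>c't+R_0$ for every $\delta\in(0,1)$. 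Replacing your circular reference by this moving-frame subsolution argument makes your proof complete.
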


\begin{proof}
  Generalizing the definition in 
  \eqref{eq:thstar}, let
  \begin{align}
    \label{eq:thetastar2}
    \theta^* := \inf \left\{ u > 0 \ : \ V(u) < 0
    \right\}, 
  \end{align}
  and observe that by our assumptions $\theta^* \in [\theta_0,
  1)$. Next, define
  \begin{align}
    \label{eq:thc}
    \theta_c := \inf \left\{ u > 0 \ : \ V(u) + {c^2 u^2 \over 8} < 0
    \right\}. 
  \end{align}
  We claim that $\theta_c \in (\theta^*, 1)$. In particular, we have
  $\theta_c > \theta_0$. Indeed, clearly $\theta_c > \theta^*$ if
  $\theta^* > 0$. At the same time, since $f'(0) \leq 0$, we have
  $\theta_c > 0$.  Furthermore, by Lemma \ref{l:poinc} and Remark
  \ref{r:wl} there holds
  \begin{align}
    \label{phicc}
    0 > \Phi_c[u(\cdot, t)] \geq \int_{\mathbb R^N} e^{cz} \left( {c^2
    u^2(x, t) \over 8} + V(u(x, t)) \right) dx,
  \end{align}
  for all $t \geq T$. Therefore, passing to the limit $t \to \infty$
  in \eqref{phicc} and using Corollary \ref{c:limsup}, in view of
  \eqref{bistable} we conclude that $\theta_c < 1$.

  Now, by \eqref{eq:dPhicdt} we have for any $t > T$
  \begin{eqnarray}
    e^{-c (R_{\theta_c} (t) - ct)} \Phi_{c} [\tilde u(\cdot, T)]
    \geq e^{-c (R_{\theta_c}(t) - ct)} \Phi_{c} [\tilde u(\cdot,
    t)]  = e^{-c R_{\theta_c}(t)} \Phi_c[u(\cdot,t)],
  \end{eqnarray}
  where $\tilde u$ is defined in \eqref{eq:utilde}. Again, using Lemma
  \ref{l:poinc}, (SD) and noting that
  $\min_{u \geq 0} V(u) = V(1) < 0$ by \eqref{negpotential}, we obtain
  \begin{align}
    \Phi_c[u(\cdot, t)] 
    & \geq  \int_{ \{ |y| < R_{\theta_c(t)} \} \cap \{ z <
      R_{\theta_c(t)} \} } 
      e^{cz}V(u(x, t))\, dx \notag \\ 
    & \quad + \int_{ \{ |y| < R_{\theta_c(t)} \} \cap \{ z >
      R_{\theta_c(t)} \} } 
      e^{cz}\left( {c^2 u^2(x, t) \over 8} + V(u(x, t))
      \right) \, dx \notag \\
    & \quad + \int_{ \{ |y| > R_{\theta_c(t)} \} } 
      e^{cz}\left( {c^2 u^2(x, t) \over 8} + V(u(x, t))
      \right) \, dx\nonumber\\
    &\geq
      {C_N V(1) \over c} R_{\theta_c}^{N-1} (t) e^{c R_{\theta_c}(t)},
  \end{align}
  for some $C_N > 0$ depending only on $N$. Thus, for every $t>T$ we
  have
  \begin{equation}
    e^{-c (R_{\theta_c}(t) - ct)} \Phi_{c} [\tilde u(\cdot, T)]
    \geq  {C_N V(1) \over c} R^{N-1}_{\theta_c}(t).
  \end{equation}
  Dividing this inequality by a negative quantity
  $\Phi_{c} [\tilde u(\cdot, T)]$ and taking the logarithm of both
  sides, we obtain
  \begin{equation}\label{moveleading}
    R_{\theta_c}(t) + \frac {N-1}{c} \ln R_{\theta_c}(t) \geq
    ct + \frac{1}{c} \ln \frac{c \Phi_c [\tilde u(\cdot, T)]}{C_N V(1)}.
  \end{equation}
  As $t \rightarrow \infty$, the right-hand side of
  (\ref{moveleading}) goes to positive infinity, which implies that
  $\displaystyle \lim_{t\rightarrow\infty} R_{\theta_c}(t) =\infty$.
  Then $R_{\theta_c}(t)$ dominates in the left-hand side of
  (\ref{moveleading}) and, therefore, for any $c' \in (0, c)$ we have
  $R_{\theta_c}(t) > c' t$ for any sufficiently large $t$. This proves
  the desired result for all $\delta \in (0, \theta_c]$.

  To complete the proof, we need to show that \eqref{eq:Rd} also holds
  for all $\delta \in (\theta_c, 1)$. We note that by (SD) we have
  $\hat u(x, t) > \theta_c$ for all $x \in B_R(0)$ and $t \geq T'$,
  with some $T' > 0$ sufficiently large depending on $R > 0$, where
  $\hat u(y, z, t) := u(y, z + c't, t)$. At the same time, since
  $\theta_c > \theta_0$, we have that
  $\underline{u} : B_R(0) \times [T', \infty) \to [0, 1)$ solving
  \begin{align}
    & \underline{u}_t = \Delta \underline{u} + c' \underline{u}_z +
      f(\underline{u}), \quad (x, t) \in  B_R(0) \times
      [T', \infty), \label{uund} \\
    & \underline{u}(x, T') = \theta_c, \quad x \in B_R(0), \\
    & \underline{u}(x, t) = \theta_c, \quad   (x, t) \in  \partial
      B_R(0) \times  [T', \infty),  \label{uundbc} 
  \end{align}
  is a monotonically increasing in $t$ subsolution for $\hat u(x, t)$
  in $B_R(0) \times [T', \infty)$. In particular, by standard
  parabolic regularity we have
  $\underline{u}(x, t) \to \underline{u}_R^\infty(x)$ from below as
  $t \to \infty$ uniformly in $x \in B_R(0)$, where
  $\underline{u}_R^\infty(x) < 1$ is a stationary solution of
  \eqref{uund} and \eqref{uundbc}. By standard elliptic regularity
  \cite{gilbarg}, the latter, in turn, constitute an increasing family
  of solutions of \eqref{uund} that converge locally uniformly to a
  limit solution $\underline{u}^\infty(x) > \theta_c$ in all of
  $\mathbb R^N$ as $R \to \infty$. At the same time, from the fact
  that $f(u) > 0$ for all $u \in [\theta_c, 1)$ we conclude that
  $\underline{u}^\infty(x) = 1$ (use the solution of $u_t = f(u)$ with
  $u(x, 0) = \theta_c$ as a subsolution).  Hence, by comparison
  principle, we have $\lim_{t \to \infty} u(y, z + c't, t) = 1$,
  yielding the claim in view of (SD).
\end{proof}

\begin{cor}
  \label{c:wto1}
  Let $f'(0) \leq 0$ and assume that \eqref{negpotential} holds. Let
  $u(x, t)$ be a wave-like solution satisfying (SD). Then
  $\displaystyle \lim_{t\rightarrow\infty}u(\cdot, t)=1$ locally
  uniformly in $\mathbb{R}^N$.
\end{cor}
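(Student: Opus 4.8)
The plan is to deduce the corollary directly from Lemma \ref{spread} together with Corollary \ref{c:limsup}. Since $u(x,t)$ is wave-like and satisfies (SD), the hypotheses of Lemma \ref{spread} are in force: by Proposition \ref{p:exist} the solution $u(\cdot,t)$ is strictly decreasing in $|x|$ for $t>0$, so the leading and trailing edges coincide and $R_\delta(t)=R_\delta^+(t)=R_\delta^-(t)$ is well defined, and by assumption $u(\cdot,T)\in H^1_c(\mathbb R^N)$ with $\Phi_c[u(\cdot,T)]<0$ for some $c>0$, $T\ge 0$. Fix an arbitrary radius $R>0$; the goal is to show $\sup_{|x|\le R}|u(x,t)-1|\to 0$ as $t\to\infty$.

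First I would fix $\delta\in(0,1)$ and apply Lemma \ref{spread} with some choice of $c'\in(0,c)$, obtaining $R_0\in\mathbb R$ such that $R_\delta(t)>c't+R_0$ for all $t\ge 0$. For $t$ sufficiently large the right-hand side exceeds $R$, hence $R_\delta(t)>R$. Since $R_\delta(t)=R_\delta^-(t)=\inf\{|x|:u(x,t)<\delta\}$ and $u(\cdot,t)$ is radially non-increasing, it follows that $u(x,t)\ge\delta$ for every $|x|\le R$ once $t$ is large enough. Therefore $\liminf_{t\to\infty}\inf_{|x|\le R}u(x,t)\ge\delta$, and letting $\delta\uparrow 1$ gives $\liminf_{t\to\infty}\inf_{|x|\le R}u(x,t)\ge 1$.

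On the other hand, Corollary \ref{c:limsup} gives $\limsup_{t\to\infty}\|u(\cdot,t)\|_{L^\infty(\mathbb R^N)}\le 1$, so in particular $\limsup_{t\to\infty}\sup_{|x|\le R}u(x,t)\le 1$. Combining the two bounds yields $\lim_{t\to\infty}\sup_{|x|\le R}|u(x,t)-1|=0$, and since $R>0$ was arbitrary, $u(\cdot,t)\to 1$ locally uniformly in $\mathbb R^N$.

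I do not expect a serious obstacle: the substantive content — that the level sets of a wave-like symmetric-decreasing solution invade all of $\mathbb R^N$ with positive speed — is precisely Lemma \ref{spread}. The only points needing a little care are verifying that the wave-like hypothesis plus (SD) place us exactly in the setting of that lemma (so the estimate $R_\delta(t)>c't+R_0$ holds for \emph{every} $\delta\in(0,1)$), and the elementary observation that a lower bound on $R_\delta^-(t)$ translates, via radial monotonicity, into the pointwise bound $u(x,t)\ge\delta$ on $B_R(0)$. The upper bound $u\le 1$ in the limit is then supplied for free by Corollary \ref{c:limsup}.
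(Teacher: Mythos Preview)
Your argument is correct and is exactly the intended one: the paper states Corollary~\ref{c:wto1} without proof because it follows immediately from Lemma~\ref{spread} (applied for each $\delta\in(0,1)$) together with Corollary~\ref{c:limsup}, just as you spell out.
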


Finally, using a truncation argument similar to the one in \cite[Lemma
3.4]{mz:nodea13}, we can construct a wave-like subsolution for a
solution of \eqref{main} whose energy becomes negative at some
$t$. Then, applying Corollary \ref{c:wto1} to that subsolution and
using comparison principle, we arrive at the following result.

\begin{lemma}\label{test}
  Let $f'(0) \leq 0$ and assume that \eqref{negpotential} holds. Let
  $u(x, t)$ be the solution of \eqref{main} satisfying \eqref{initial}
  and (SD), and suppose that there exists $T \geq 0$ such that
  $E[u(\cdot,T)]<0$. Then
  $\displaystyle \lim_{t\rightarrow\infty}u(\cdot, t)=1$ locally
  uniformly in $\mathbb{R}^N$.
\end{lemma}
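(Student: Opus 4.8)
The plan is to deduce Lemma~\ref{test} from the wave-like machinery of Lemma~\ref{l:wl} and Corollary~\ref{c:wto1} by bounding $u$ from below by a solution issued from a \emph{compactly supported} symmetric-decreasing initial datum of negative energy. The reason this helps is that compact support automatically places such a datum into $L^2_c(\mathbb R^N)$ for every $c>0$, so Lemma~\ref{l:wl} applies to it, whereas no such decay is assumed on $\phi$. Concretely, for $\eta>0$ I would take $w_\eta:=(u(\cdot,T)-\eta)_+$. Since $u(\cdot,T)\in H^1(\mathbb R^N)\cap L^\infty(\mathbb R^N)$ satisfies (SD) and tends to $0$ uniformly as $|x|\to\infty$ (Proposition~\ref{p:exist} and standard parabolic regularity), the function $w_\eta$ is radial and non-increasing in $|x|$, compactly supported, bounded, belongs to $H^1(\mathbb R^N)$, and $0\le w_\eta\le u(\cdot,T)$. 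Using $\nabla w_\eta=\nabla u(\cdot,T)\,\mathbf 1_{\{u(\cdot,T)>\eta\}}$ a.e.\ and $V(0)=0$,
\begin{align*}
  E[w_\eta]=\int_{\{u(\cdot,T)>\eta\}}\Big(\tfrac12|\nabla u(x,T)|^2+V\big(u(x,T)-\eta\big)\Big)\,dx .
\end{align*}

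The crucial claim is that $E[w_\eta]\to E[u(\cdot,T)]$ as $\eta\to0^+$; since $E[u(\cdot,T)]<0$ by hypothesis, this yields $E[w_\eta]<0$ for all small enough $\eta$. Once such an $\eta$ is fixed, I would let $v(x,t)$ be the solution of \eqref{main} with initial datum $w_\eta$, which lies in $L^2_{c_0}(\mathbb R^N)\cap L^\infty(\mathbb R^N)$ for every $c_0>0$: by Lemma~\ref{l:wl} (with $T=0$) the solution $v$ is wave-like, and, being symmetric-decreasing, it satisfies $v(\cdot,t)\to1$ locally uniformly as $t\to\infty$ by Corollary~\ref{c:wto1}. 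Since $w_\eta\le u(\cdot,T)$, the comparison principle gives $u(\cdot,t+T)\ge v(\cdot,t)$ for all $t\ge0$, hence $\liminf_{t\to\infty}u(\cdot,t)\ge1$ locally uniformly; combined with Corollary~\ref{c:limsup} this gives $u(\cdot,t)\to1$ locally uniformly, as claimed.

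To prove the energy convergence I would write $u=u(\cdot,T)$ and split $V(u-\eta)=V(u)+\big(V(u-\eta)-V(u)\big)$. For the $\tfrac12|\nabla u|^2$ and $V(u)$ contributions, note that $\{u>\eta\}\uparrow\{u>0\}$, that $|\nabla u|^2\in L^1(\mathbb R^N)$ and $V(u)\in L^1(\mathbb R^N)$ (both because $E[u(\cdot,T)]$ is finite, Proposition~\ref{p:exist}), and that $\nabla u=0$ a.e.\ on $\{u=0\}$ with $V(0)=0$; monotone convergence for the gradient term and dominated convergence (by $|V(u)|\in L^1$) for the potential term then give $\int_{\{u>\eta\}}\big(\tfrac12|\nabla u|^2+V(u)\big)\,dx\to E[u(\cdot,T)]$. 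For the remainder, fix $\delta_0>0$ and set $L_0:=\sup_{[0,\delta_0]}|f'|$, so that $|f(s)|\le L_0 s$ for $0\le s\le\delta_0$ (using $f(0)=0$), and $C_1:=\sup_{[0,\|u\|_\infty]}|f|$; since $V'=-f$, the mean value theorem gives $|V(u-\eta)-V(u)|\le\eta\sup_{[0,u]}|f|$ on $\{u>\eta\}$, whence
\begin{align*}
  \int_{\{u>\eta\}}\big|V(u-\eta)-V(u)\big|\,dx\le L_0\int_{\{u>\eta\}}\eta\,u\,dx+C_1\,\eta\,\big|\{u>\delta_0\}\big| .
\end{align*}
The last term tends to $0$ since $|\{u>\delta_0\}|<\infty$ ($u\in L^2$), and the first tends to $0$ by dominated convergence, because the integrand $\eta\,u\,\mathbf 1_{\{u>\eta\}}$ is dominated by $u^2\in L^1(\mathbb R^N)$ (on $\{u>\eta\}$ one has $\eta u<u^2$) and converges to $0$ pointwise. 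This establishes $E[w_\eta]\to E[u(\cdot,T)]$.

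The step I expect to be the main obstacle is precisely this last estimate, i.e.\ controlling $V(u(\cdot,T)-\eta)-V(u(\cdot,T))$ on the set $\{u(\cdot,T)>\eta\}$, whose measure depends on $\eta$ and need not remain bounded as $\eta\to0$, so one must produce an $\eta$-independent integrable majorant. What makes it go through is the combination of the Lipschitz bound $|f(s)|\lesssim s$ near $s=0$ (forced by $f\in C^1$ with $f(0)=0$), the finiteness of $|\{u(\cdot,T)>\delta_0\}|$ (from $u(\cdot,T)\in L^2$), and the elementary bound $\eta u<u^2$ on $\{u>\eta\}$, which together reduce the whole matter to a single application of dominated convergence. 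The remaining ingredients — that $w_\eta$ is symmetric-decreasing and compactly supported, and the comparison step — are routine.
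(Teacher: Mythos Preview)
Your proposal is correct and follows essentially the same strategy the paper indicates: construct a compactly supported symmetric-decreasing datum below $u(\cdot,T)$ with negative energy (the paper only alludes to ``a truncation argument similar to the one in \cite[Lemma 3.4]{mz:nodea13}'', while you spell it out via $w_\eta=(u(\cdot,T)-\eta)_+$ and an explicit dominated-convergence estimate), then apply Lemma~\ref{l:wl}, Corollary~\ref{c:wto1}, and comparison. One cosmetic point: your appeal to parabolic regularity for the uniform decay of $u(\cdot,T)$ needs $T>0$, but this is harmless since by \eqref{eq:dEdt} you may always replace $T$ by any $T'>T$; alternatively, for $T=0$ the decay already follows from $\phi\in L^2$ together with (SD).
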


Lemma \ref{test} essentially constitutes the statement of part (ii) of
Theorem \ref{t:igni}. The proof of part (i) then comes from the
following lemma that estimates the energy dissipation rate for radial
symmetric-decreasing solutions that propagate.

\begin{lemma}
  \label{unboundedenergy}
  Let $f'(0) \leq 0$ and assume that \eqref{negpotential} holds. Let
  $u(x, t)$ be the solution of \eqref{main} satisfying \eqref{initial}
  and (SD), and suppose that $u(\cdot, t_n) \to 1$ locally uniformly
  in $\mathbb{R}^N$ for some sequence of $t_n \to \infty$. Then
  $\displaystyle\lim_{t\rightarrow\infty}E[u(\cdot,t)]= -\infty$.
\end{lemma}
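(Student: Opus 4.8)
\emph{Strategy.} I argue by contradiction. By Proposition~\ref{p:exist} the energy $E[u(\cdot,t)]$ is finite for every $t>0$, and by \eqref{eq:dEdt} it is non‑increasing; hence $E_\infty:=\lim_{t\to\infty}E[u(\cdot,t)]$ exists in $[-\infty,+\infty)$, and it suffices to exclude $E_\infty>-\infty$. Under that assumption, integrating \eqref{eq:dEdt} gives
\begin{equation*}
  \Theta(T):=\int_T^\infty\!\!\int_{\mathbb R^N}u_t^2(x,t)\,dx\,dt=E[u(\cdot,T)]-E_\infty\ \xrightarrow[\,T\to\infty\,]{}\ 0 .
\end{equation*}
I will contradict this by showing that the hypothesis forces $u$ to propagate, so that the energy it dissipates over suitable, lengthening time windows cannot go to zero.

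\emph{A wave‑like subsolution.} Following the truncation idea of \cite[Lemma~3.4]{mz:nodea13}, fix $\eta\in(0,\tfrac14)$ small. Since $u(\cdot,t_n)\to1$ locally uniformly, for each large $R$ there is $t_n>1$ (so that standard parabolic regularity yields $\|\nabla u(\cdot,t_n)\|_{L^\infty(\mathbb R^N)}\le M$) with $u(\cdot,t_n)\ge1-\eta$ on $B_R(0)$. Let $\chi_R$ be a radial non‑increasing cutoff equal to $1$ on $B_R(0)$ and $0$ outside $B_{R+L}(0)$, with $L$ a fixed constant chosen so that $(1-\eta)\|\nabla\chi_R\|_{L^\infty}\le M$, and set $\psi:=\min\{u(\cdot,t_n),(1-\eta)\chi_R\}$. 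Then $\psi$ obeys (SD), is supported in $B_{R+L}(0)$ (hence $\psi\in H^1_c(\mathbb R^N)\cap L^\infty(\mathbb R^N)$ for every $c>0$), $0\le\psi\le u(\cdot,t_n)$, $\psi\equiv1-\eta$ on $B_R(0)$, and $\|\nabla\psi\|_{L^\infty}\le M$. Taking $c:=1/R$ and splitting $\Phi_c[\psi]$ over $B_R(0)$ and the annulus $B_{R+L}(0)\setminus B_R(0)$, using $e^{cz}\ge e^{-1}$ on $B_R(0)$, $e^{cz}\le e^{2}$ on $B_{R+L}(0)$ for $R\ge L$, and $V(1-\eta)<\tfrac12 V(1)<0$ for $\eta$ small, one obtains
\begin{equation*}
  \Phi_c[\psi]\ \le\ \tfrac12 e^{-1}V(1)\,|B_R(0)|+C_1 R^{N-1},
\end{equation*}
with $C_1$ independent of $R$; since $V(1)<0$ this is negative once $R$ is large, so $\psi$ is wave‑like (Definition~\ref{d:wl}). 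Let $w$ solve \eqref{main} with $w(\cdot,t_n)=\psi$; by comparison $0\le w\le u$ for $t\ge t_n$. As $w$ obeys (SD) and $f'(0)\le0$, \eqref{negpotential} hold, Lemma~\ref{spread} applies: there is $c'>0$ such that for every $\delta\in(0,1)$ the radius $R_\delta^w(t)$ of the ball $\{w(\cdot,t)>\delta\}$ satisfies $R_\delta^w(t)\ge c'(t-t_n)-C_\delta$. By Corollary~\ref{c:wto1} then $w(\cdot,t)\to1$ locally uniformly, hence so does $u$ (since $u\ge w$ and $\limsup_t\|u(\cdot,t)\|_{L^\infty}\le1$ by Corollary~\ref{c:limsup}); moreover, writing $R_\delta(t):=R_\delta^+(t)$ from \eqref{eq:Rdp} for $u$ (which by (SD) is the radius of $\{u(\cdot,t)>\delta\}$), we record $R_\delta(t)\ge R_\delta^w(t)\ge c'(t-t_n)-C_\delta$.

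\emph{The contradiction.} For a parameter $\sigma=\sigma(t)>0$ to be chosen, put
\begin{equation*}
  A_t:=\{x:\ u(x,t)\le1-2\eta\ \text{ and }\ u(x,t+\sigma)\ge1-\eta\}=B_{R_{1-\eta}(t+\sigma)}(0)\setminus B_{R_{1-2\eta}(t)}(0),
\end{equation*}
the last equality by (SD). Here I use that the ``bulk'' level set propagates at finite speed, i.e.\ $R_{1-2\eta}(t)\le C_0 t$ for all large $t$ and some constant $C_0$ — a standard fact for bistable and ignition nonlinearities via a moving supersolution built from the one‑dimensional traveling wave (Proposition~\ref{p:tw}) matched to $u(\cdot,T_0)$ at a time $T_0$ at which $u$ is already close to $1$ near the origin and small at infinity; in the monostable case, if instead $R_{1-2\eta}(t)$ grew super‑linearly then so would $R_{1-\eta}(t)$, and the estimate below only improves. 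Choosing $\sigma(t):=\tfrac{2C_0}{c'}\,t$, the bound from the previous step gives, for $t$ large, $R_{1-\eta}(t+\sigma(t))\ge c'(t+\sigma(t)-t_n)-C_{1-\eta}\ge(c'+2C_0)t-C_{1-\eta}>C_0 t\ge R_{1-2\eta}(t)$, so that
\begin{equation*}
  |A_t|\ \ge\ \omega_N\bigl((c'+2C_0)t-C_{1-\eta}\bigr)^N-\omega_N(C_0 t)^N\ \ge\ \gamma\,t^N
\end{equation*}
for $t$ large, with $\gamma=\gamma(c',C_0,N)>0$. For $x\in A_t$ one has $u(x,t+\sigma(t))-u(x,t)\ge\eta$, so by Cauchy–Schwarz $\int_t^{t+\sigma(t)}u_t^2(x,s)\,ds\ge\eta^2/\sigma(t)$; integrating over $A_t$ and using Fubini,
\begin{equation*}
  \Theta(t)\ \ge\ \int_{A_t}\!\int_t^{t+\sigma(t)}u_t^2(x,s)\,ds\,dx\ \ge\ \frac{\eta^2}{\sigma(t)}\,|A_t|\ \ge\ \frac{\eta^2\gamma c'}{2C_0}\,t^{N-1}\ \xrightarrow[\,t\to\infty\,]{}\ \infty,
\end{equation*}
contradicting $\Theta(t)\to0$. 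Hence $E_\infty=-\infty$, as claimed.

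\emph{Main obstacle.} The one genuinely nontrivial ingredient is the finite linear bound $R_{1-2\eta}(t)\le C_0 t$ used above: unlike the lower bound of Remark~\ref{r:lb}, it is not automatic for arbitrary $L^2$ data (Theorem~\ref{t:radfront} assumes $\phi\in L^2_c$ with $c>c^\dag$) and must be argued separately for the region where $u$ is close to $1$. A variant that sidesteps it is to bound $E[u(\cdot,t)]$ directly: parabolic regularity gives $\nabla u(\cdot,t)\to0$ uniformly on $B_{(c^\dag-\epsilon)t}(0)$, whence $\int_{B_{(c^\dag-\epsilon)t}(0)}\bigl(\tfrac12|\nabla u|^2+V(u)\bigr)dx\le(V(1)+o(1))\,|B_{(c^\dag-\epsilon)t}(0)|\to-\infty$ of order $-t^N$, provided one shows that the energy carried by the transition region and the decaying tail (where $V$ may change sign) is of lower order; controlling the width of the transition region is the same technical core.
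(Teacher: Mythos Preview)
Your overall plan is the same as the paper's: argue by contradiction, use the wave‑like subsolution machinery (Lemma~\ref{spread}) to get a linear lower bound on the level‑set radii of $u$, and then confront the resulting growth with the finiteness of the total dissipation $\Theta(T)$. The construction of the subsolution and the passage to a linear lower bound on $R_\delta(t)$ are fine.

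The genuine gap is precisely the one you flag yourself: the linear \emph{upper} bound $R_{1-2\eta}(t)\le C_0 t$. You invoke it to make the annulus $A_t$ nondegenerate with the fixed choice $\sigma(t)=\tfrac{2C_0}{c'}t$, but you do not establish it. In the monostable case your fallback (``if $R_{1-2\eta}$ grows super‑linearly the estimate only improves'') is not an argument as written: with $\sigma(t)$ pinned to a linear scale you cannot guarantee $R_{1-\eta}(t+\sigma)>R_{1-2\eta}(t)$ at all, so $A_t$ could be empty. Your alternate variant (directly estimating $E[u(\cdot,t)]$) runs into the same obstruction, as you acknowledge.

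The paper avoids this completely by decoupling the two times. It fixes $t_1$ once and for all (after choosing $\alpha$ small so that $\int_{t_1}^\infty\!\int u_t^2\le\alpha^2$), reads off the \emph{finite number} $r_0:=R_{\theta_c/2}(t_1)$, and only then lets the window length $T$ grow, with $t_2:=t_1+T$ and annulus $\{cT/4<|x|<cT/2\}$. The inner threshold is then automatic: by (SD), $u(\cdot,t_1)<\theta_c/2$ on $\{|x|>r_0\}$, and one simply takes $T$ so large that $cT/4>r_0$. No upper bound on the speed of any level set is needed. Comparing $u(\cdot,t_2)\ge\theta_c$ on $\{|x|\le cT/2\}$ (from the subsolution lower bound) with $u(\cdot,t_1)<\theta_c/2$ on the annulus yields
\[
\int_{t_1}^{t_2}\!\!\int_{\{|x|<cT/2\}}|u_t|\ \ge\ \tfrac{\theta_c}{2}\,\bigl|\{cT/4<|x|<cT/2\}\bigr|\ \sim\ T^{N},
\]
while Cauchy--Schwarz and $\int_{t_1}^\infty\!\int u_t^2\le\alpha^2$ force the left side to be $\lesssim \alpha\,T^{(N+1)/2}$; for $T$ large this is a contradiction.

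If you prefer to keep your two‑moving‑times setup, the clean fix is to let $\sigma$ be adaptive: set $\rho(t):=R_{1-2\eta}(t)$ (finite for each $t$, and $\rho(t)\to\infty$ since $u\to1$ locally uniformly) and choose $\sigma(t):=2\rho(t)/c'$. Then $R_{1-\eta}(t+\sigma)\ge c'(t+\sigma)-C\ge 2\rho(t)-C$, so $|A_t|\gtrsim\rho(t)^N$ and $|A_t|/\sigma(t)\gtrsim\rho(t)^{N-1}$, which is bounded below by a positive constant for $N=1$ and tends to infinity for $N\ge2$; either way $\Theta(t)\not\to0$. This is correct but less transparent than the paper's ``freeze $t_1$'' trick.
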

\begin{proof}
  We argue by contradiction. Suppose that
  $\displaystyle \lim_{t\rightarrow\infty}u(\cdot, t)=1$ locally
  uniformly in $\mathbb{R}^N$ and $E[u(\cdot,t)]$ is bounded below.
  Fix $\eps \in (0, 1 - \theta^*)$, where $\theta^*$ is defined in
  \eqref{eq:thetastar2}, and $R > 0$, and consider
  \begin{align}
    \label{eq:udel}
    \phi_{\eps,R}(x) = 
    \begin{cases}
      1 - \eps, & |x| < R, \\
      (1 - \eps) (R + 1 - |x|), & R \leq |x| \leq R + 1, \\
      0, & |x| > R + 1.
    \end{cases}
  \end{align}
  It is easy to see that there exists $R = R_\eps$ such that
  $E[\phi_{\eps,R_\eps}] < 0$.  Now let $u_\eps(x, t)$ be the solution
  of \eqref{main} and \eqref{initial} with
  $\phi = \phi_{\eps, R_\eps}$. By Lemma \ref{l:wl}, $u_\eps(x, t)$ is
  wave-like.  Therefore, by Lemma \ref{spread} we have
  $R_{\theta_c}^\eps(t) > c' t + R_0$ for all $c' \in (0, c)$ and all
  $t \geq 0$, with some $R_0 \in \mathbb R$ independent of $t$, where
  $\theta_c$ is defined in \eqref{eq:thc} and $R_\delta^\eps(t)$ is
  the leading edge of $u_\eps(x, t)$.

  Since $u(\cdot, t_n) \to 1$ as $t \to \infty$ locally uniformly in
  $\mathbb R^N$, there exists $T_\eps \geq 0$ such that
  $u(x, T_\eps) \geq u_\eps(x, 0)$ for all $x \in \mathbb R^N$.
  Therefore, $u_\eps(x, t - T_\eps)$ is a subsolution for $u(x, t)$
  for all $t \geq T_\eps$ and, consequently, by comparison principle
  we have $R_{\theta_c}(t) > c t / 2$ for all $t \geq t_0$, for some
  $t_0 \geq T_\eps$.  This implies that $u(x,t)\geq\theta_c$ for all
  $t \geq t_0$ and $x \in \mathbb R^N$ such that $|x| \leq c t /2$.

  By \eqref{eq:dEdt}, for any $\alpha > 0$ there exists
  $t_\alpha \geq 0$ such that
  \begin{equation}
    \int^{\infty}_{t_\alpha}\int_{\mathbb{R}^N}u^2_t(x,t) \, dx
    \, dt \leq \alpha^2.
  \end{equation}
  Let us take
  \begin{align}
    \label{eq:al}
    \alpha = 2^{-(3N+4)/2} c^{N / 2} \theta_c \left| B_1(0) \right|^{1/2} .  
  \end{align}
  We also take $t_1 \geq 0$ sufficiently large such that
  $t_1 \geq \max\{t_\alpha,t_0\}$ and $r_0=R_{\theta_c / 2}(t_1)>1$.
  In addition, we take $T$ sufficiently large such that
  $T>\max\{ t_0, 1 \}$ and $r_0 < cT/4$. Finally, we take $t_2=t_1+T$
  and $r=cT/2$.  Since $t_2>t_1 \geq t_\alpha$, by Cauchy-Schwarz
  inequality we have
  \begin{eqnarray}\label{eq:utcontr}
    \int^{t_2}_{t_1}\int_{ \{ |x|<r \} }| u_t(x,t)|\, dx \, dt
    &\leq& \sqrt{r^N |B_1(0)| (t_2-t_1)} \left( \int^{t_2}_{t_1}
           \int_{|x|<r}u^2_t(x,t) \, dx \, dt \right)^{1/2}\nonumber\\
    &\leq& \sqrt{r^N |B_1(0)| (t_2-t_1)} \left( \int^{\infty}_{t_\alpha}
           \int_{\mathbb{R}^N}u^2_t(x,t) \, dx \, dt \right)^{1/2}\nonumber\\
    &\leq& \alpha \sqrt{r^N |B_1(0)| (t_2-t_1)}\nonumber\\
    &=&\frac{\theta_c |B_1(0)|}{4} \left( \frac{c}{4} \right)^N T^{(N+1)/2}.
  \end{eqnarray}

  On the other hand, we also have
  \begin{eqnarray}
    \int^{t_2}_{t_1}\int_{ \{ |x|<r \} } | u_t(x,t)| \, dx \, dt 
    & \geq & \int^{t_2}_{t_1} \int_{ \{ \frac{cT}{4} < |x| <
             \frac{cT}{2} \} }  | u_t(x,t)| \, dx 
             \, dt  \nonumber\\
    & \geq & \int_{ \{ \frac{cT}{4} < |x| <
             \frac{cT}{2} \} } 
             (u(x,t_2) - u(x,t_1)) \, dx.
  \end{eqnarray}
  Since $t_2>T>t_0$, we have $u(x,t_2)\geq \theta_c$ for
  $|x| \leq cT/2$, and by the definition of $r_0$ and $T$ we have
  $u(x,t_1)<\theta_c/2$ for $cT/4 < |x| < cT/2$.  So we have
  \begin{equation}
    \int^{t_2}_{t_1}\int_{ \{ |x|<r \} } |u_t(x, t)| \, dx \, dt \geq
    \frac{\theta_c |B_1(0)|}{2} \left( \frac{c}{4} \right)^N T^N,
  \end{equation}
  which contradicts \eqref{eq:utcontr}, because $T>1$ and $N \geq 1$.
\end{proof}

\begin{proof}[Proof of Theorem \ref{t:igni}] 
  We just need to verify that the assumptions of Lemma \ref{test} and
  Lemma \ref{unboundedenergy} are satisfied. If $\theta_0 = 0$, then
  by \eqref{bistable} we must have $f(u) > 0$ for all $u \in (0,1)$,
  and so \eqref{negpotential} is clearly satisfied. On the other hand,
  if $\theta_0 > 0$, then by \eqref{bistable} we must have
  $f'(0) \leq 0$.
\end{proof}

\begin{proof}[Proof of Theorem \ref{t:omls}]
  By Theorem \ref{t:igni}, either
  $\lim_{t \to \infty} E[u(\cdot, t)] = -\infty$ or
  $\lim_{t \to \infty} E[u(\cdot, t)] \geq 0$. Indeed, if
  $\lim_{t \to \infty} E[u(\cdot, t)] \in (-\infty, 0)$, then by
  Theorem \ref{t:igni}(ii) we have $u(\cdot, t) \to 1$ locally
  uniformly in $\mathbb R^N$. However, this contradicts Theorem
  \ref{t:igni}(i), since in this case one would have
  $\lim_{t \to \infty} E[u(\cdot, t)] = -\infty$. 

  If $\lim_{t \to \infty} E[u(\cdot, t)] = -\infty$, then there exists
  $T > 0$ such that $E[u(\cdot, T)] < 0$. Hence $u(\cdot, t) \to 1$
  locally uniformly in $\mathbb R^N$ by Theorem \ref{t:igni}(ii). This
  establishes the first alternative. In the second alternative, we
  have $E[u(\cdot, t)] \geq 0$ for all $t \geq 0$.  Therefore, by
  \eqref{eq:dEdt} there exists a sequence of $t_n \in [n, n+1)$ such
  that $u_t (\cdot, t_n) \to 0$ in $L^2(\mathbb R^N)$. In turn, by
  standard parabolic regularity one can extract a subsequence
  $t_{n_k}$ from this sequence such that $u(\cdot, t_{n_k}) \to v$ in
  $C^1_{loc}(\mathbb R^N)$ as $k \to \infty$. Following the usual
  argument for gradient flows, from \eqref{eq:mainweak} we then
  conclude that $v$ solves \eqref{stationary} distributionally and,
  hence, classically \cite{gilbarg}. Furthermore, $v$ is radial
  symmetric-decreasing, and, taking into account Corollary
  \ref{c:limsup}, we have $v(x) = v_\mu(|x|)$ for some
  $\mu \in [0, 1]$ and all $x \in \mathbb R^N$, where $v_\mu$ solves
  \eqref{eq:vmu}. Note that all radial symmetric-decreasing solutions
  of \eqref{stationary} can be parametrized by $\mu = v(0)$. In
  particular, by continuous dependence of the solutions of
  \eqref{eq:vmu} on $\mu$ in the $C^1_{loc}(\mathbb R)$ topology the
  set of all $\mu$'s for which the solution of \eqref{eq:vmu} exists
  is closed.

  By Theorem \ref{t:igni}(i), $\mu = 1$ is impossible when
  $E[u(\cdot, t)] \geq 0$ for all $t \geq 0$. Hence the set of all
  $\mu$'s corresponding to the limits of $u(\cdot, t_{n_k})$ is
  contained in $[0, 1)$. Denoting by $\omega(\phi) \subseteq [0, 1)$
  the set of all limits of $u(\cdot, t_{n_k})$ in
  $C^1_{loc}(\mathbb R^N)$ parametrized by $\mu = v(0)$, which
  coincides with the $\omega$-limit set of $u(x, t)$ (cf. Proposition
  \ref{holder} and standard parabolic regularity), by the usual
  properties of $\omega$-limit sets we have that
  $\omega(\phi) = [a, b]$ for some $0 \leq a \leq b < 1$, i.e., that
  $\omega(\phi) \subseteq [0, 1)$ is closed and connected. Thus
  \begin{align}
    \label{eq:liminfsup}
    \lim_{n \to \infty}    \inf_{\mu \in \omega(\phi)} \sup_{x \in
    B_R(0)} \big| u(x, t_n) - v_\mu(|x|) \big| \to 0, 
  \end{align}
  for any $R > 0$. In view of Proposition \ref{holder}, this completes
  the proof of the statement in the second alternative.
\end{proof}

\begin{proof}[Proof of Theorem \ref{t:radfront}]
  Since for $N = 1$ the result was established in \cite[Proposition
  2.4]{mz:nodea13}, in the rest of the proof we assume that
  $N \geq 2$. By rotational symmetry, the upper bound on
  $R_\delta^+(t)$ follows exactly as in \cite[Proposition
  5.2]{mn:cms08}, noting that
  $u(\cdot, t) \in H^2(\mathbb R^N) \cap H^2_{c_0}(\mathbb R^N)$ for
  each $t > 0$.  To prove the matching lower bound, for each
  $c \in (0, c^\dag)$ we construct a test function
  $u_c \in H^1_c(\mathbb R^N)$ which is radial symmetric-decreasing
  and satisfies $\Phi_c[u_c] < 0$. Then the result follows by Lemma
  \ref{spread}.

  Let $\bar u_c(z)$ be a one-dimensional minimizer from
  \cite[Proposition 2.3]{mz:nodea13}, which, e.g., by simple phase
  plane arguments is non-increasing in $z$. For $R > 0$, we define
  \begin{align}
    \label{eq:uc}
    u_c^R(x) := \bar u_c(|x| - R), \qquad x \in \mathbb R^N.
  \end{align}
  In particular, $\text{supp} \, (u_c^R) = \overline B_R(0)$, and
  $u_c^R$ satisfies (SD). We also note that by the definition of
  $\bar u_c$ and boundedness of $\bar u_c$ and $\bar u_c'$ there
  exists $K > 0$ such that for all $R > 0$ sufficiently large we have
  \begin{align}
    \label{uczneg}
    m_{c,R} := \int_0^\infty e^{cz} \left( \frac12 |\bar u_c'(z -
    R)|^2 + V(\bar u_c(z - R)) \right) dz < -K e^{cR}.
  \end{align}
  Let us now evaluate $\Phi_c[u_c]$. Passing to the spherical
  coordinates, we obtain
  \begin{align}
    \label{eq:Phicuc1}
    \Phi_c[u_c] 
    & = \int_{\mathbb R^N} e^{cz} \left( \frac12 |\nabla
      u_c|^2 + V(u_c) \right) dx \notag \\
    & = |\mathbb S^{N-2}| \int_0^\infty \int_0^\pi e^{c r \cos \theta}
      \left( \frac12 | \bar u'_c(r - R) |^2 + V( \bar u_c(r - R) )
      \right) r^{N-1} \sin^{N-2} \theta \, d \theta \, dr \notag \\
    & = |\mathbb S^{N-2}| R^{N-1} m_{c,R} \int_0^\pi e^{-c R (1 - \cos
      \theta)} \sin^{N-2} \theta
      \, d \theta +  |\mathbb S^{N-2}| \int_0^\infty \int_0^\pi e^{c r
      \cos \theta} \notag \\
    & \qquad \times 
      \left( \frac12 | \bar u'_c(r - R) |^2 + V( \bar u_c(r - R) )
      \right) (r^{N-1} - R^{N-1}) \sin^{N-2} \theta \, d \theta \, dr
      \notag \\ 
    & + |\mathbb S^{N-2}| R^{N-1} \int_0^\infty \int_0^\pi e^{c
      r } \left( e^{-c r (1 - \cos \theta )} -  e^{-c R (1 - \cos
      \theta )}  \right) \notag \\ 
    & \qquad \times 
      \left( \frac12 | \bar u'_c(r - R) |^2 + V( \bar u_c(r - R) )
      \right)  \sin^{N-2} \theta \, d \theta \, dr.
  \end{align}
  We proceed to estimate, using the fact that
  $e^{-c r (1 - \cos \theta )} \simeq e^{-c r \theta^2 / 2}$ for
  $\theta \ll 1$ (the details are left to the reader):
  \begin{align}
    e^{-c R} R^{-{N-1 \over 2}} \Phi_c[u_c]  
    & \leq -C K + C' R^{-1}, 
  \end{align}
  for some $C, C' > 0$ independent of $R$. Therefore, choosing $R$
  sufficiently large yields the claim.
\end{proof}

\section{Bistable nonlinearities: Proof of Theorem \ref{t:thrbist}} 
\label{s:bistab}

We now proceed to the sharp threshold results. For bistable
nonlinearities satisfying \eqref{negpotential}, we establish existence
of a sharp threshold in Theorem \ref{t:thrbist} under (TD). We define
\begin{align}
  \label{eq:Sigma0}
  \Sigma_0 & := \{\lambda \in [0, \lambda^+]
             :\;u_{\lambda}(\cdot,t)\rightarrow0
             \; \text{as} \;t\rightarrow\infty\;\text{uniformly
             in}\; \mathbb{R}^N\}, \\ 
  \label{eq:Sigma1}
  \Sigma_1 & := \{\lambda \in [0, \lambda^+]
             :\;u_{\lambda}(\cdot,t)\rightarrow1 
             \; \text{as} \;t\rightarrow\infty\;\text{locally
             uniformly in}\; \mathbb{R}^N\}, \\
  \label{eq:Sigma*}
  \Sigma_* & := [0, \lambda^+] \backslash (\Sigma_0 \cup \Sigma_1).  
\end{align}
Our goal is to prove that the threshold set $\Sigma_*$ is a single
point and to characterize the threshold solution.

\begin{proof}[Proof of Theorem \ref{t:thrbist}]
  By Theorem \ref{t:omls}, for every $\lambda \in [0, \lambda^+]$ we
  have either $E[u_\lambda (\cdot, T)] < 0$ for some $T > 0$, or
  $E[u_\lambda (\cdot, t)] \geq 0$ for all $t > 0$. In the first case,
  we have $u_\lambda (\cdot, t) \to 1$ locally uniformly as
  $t \to \infty$ and, therefore, $\lambda \in \Sigma_1$. In the second
  case, we have $u_\lambda (\cdot, t) \not\to 1$ locally uniformly, so
  $\lambda \not\in \Sigma_1$.

  Consider the case of $\lambda \in \Sigma_1$. Note that by (P3), the
  set $\Sigma_1$ is non-empty. As was mentioned in the preceding
  paragraph, we have $E[u_\lambda (\cdot, T)] < 0$ for some $T > 0$.
  Then by continuous dependence of the solution on the initial data we
  also have $E[u_{\lambda'} (\cdot, T)] < 0$ for all
  $\lambda' \in (0, \lambda^+]$ in a sufficiently small neighborhood
  of $\lambda$. Furthermore, by (P2) and comparison principle, for
  every $0 < \lambda_1 < \lambda_2 < \lambda^+$ we have
  $u_{\lambda_1} (\cdot, t) < u_{\lambda_2} (\cdot, t) $ for all
  $t > 0$. Therefore, if $\lambda_1 \in \Sigma_1$, then so is
  $\lambda_2$. This means that there exists
  $\lambda_*^+ \in (0, \lambda^+)$ such that
  $\Sigma_1 = (\lambda_*^+, \lambda^+]$.

  At the same time, we know that $\lambda \in \Sigma_0$ if and only if
  there exists $T > 0$ such that $u_\lambda(0,T) < \theta_0$ (use the
  solution of $u_t = f(u)$ with $u(x, 0) = u_0 \in (0, \theta_0)$ as a
  supersolution). Again, by continuous dependence of solutions on the
  initial data, if $u_\lambda(0,T) < \theta_0$ for some
  $\lambda \in [0, \lambda^+)$, then $u_{\lambda'}(0,T) < \theta_0$ as
  well for all $\lambda' \in [0, \lambda^+)$ sufficiently close to
  $\lambda$, and by comparison principle we have
  $u_{\lambda_1} (\cdot, t) \to 0$ uniformly for all
  $0 < \lambda_1 < \lambda_2 < \lambda^+$, whenever
  $u_{\lambda_2} (\cdot, t) \to 0$, as $t \to \infty$. Thus, again, by
  (P3) there exists $\lambda_*^- \in (0, \lambda^+)$ such that
  $\Sigma_0 = [0, \lambda_*^-)$.

  Now, let
  $\lambda \in \Sigma_* = [\lambda_*^-, \lambda_*^+] \not=
  \varnothing$.
  We claim that under our assumptions $u_\lambda (\cdot, t) \to v$
  uniformly as $t \to \infty$, where $v$ is a ground state. Indeed,
  for a sequence of $t_n \to \infty$ as in the proof of Theorem
  \ref{t:omls}, let $u(\cdot, t_{n_k}) \to v$ in
  $C^1_{loc}(\mathbb R^N)$ for some $n_k \to \infty$, with
  $v(x) = v_\mu(|x|)$ and $v_\mu$ solving \eqref{eq:vmu} with
  $\mu = v(0)$. Note that since $\lambda \not\in \Sigma_0$, by
  comparison principle we have $u_\lambda(0, t_{n_k}) > \theta_0$ for
  all $k \in \mathbb N$ and, hence, $\mu \geq \theta_0$.  Furthermore,
  for every $k$ there is a unique $R_k > 0$ such that
  $u_\lambda(x, t_{n_k}) = \theta_0/2$ for $|x| = R_k$.  Also, there
  exists $R_0 \geq 0$ such that $\theta_0 < v_\mu(R_0) < \theta^*$,
  where $\theta^*$ is defined via \eqref{eq:thstar}, with the
  convention that $R_0 = 0$ if this inequality has no solution. Then,
  by (SD) there exists $k_0 \in \mathbb N$ such that
  $u_\lambda(x, t_{n_k}) < \theta^*$ for all $|x| > R_0$ and all
  $k \geq k_0$. In turn, this means that
  $V(u_\lambda(x, t_{n_k})) \geq 0$ and $R_k > R_0$ for all
  $k \geq k_0$ and $|x| > R_0$.

  By \eqref{eq:dEdt}, for any $T \in (0, 1)$ and $k \geq k_0$ we have
  for any $R \geq R_0$
  \begin{multline}
    \label{eq:Elam*bi}
    +\infty > E[u_\lambda (\cdot, T)] \geq E[u_\lambda (\cdot,
    t_{n_k})] \geq \int_{B_R(0)} \left( \frac12 | \nabla u_\lambda (x,
      t_{n_k})|^2 +
      V(u_\lambda (x, t_{n_k})) \right) dx \\
    + \int_{\mathbb R^N \backslash B_R(0)} V( u_\lambda (x, t_{n_k}))
    dx. \qquad
  \end{multline}
  On the other hand, since $V( u_\lambda (x, t_{n_k})) \geq V_0 > 0$
  for all $R_0 < |x| < R_k $ and $k \geq k_0$, where
  $V_0 := \min \{ V(\theta_0/2), V(v_\mu(R_0))\}$, from
  \eqref{eq:Elam*bi} with $R = R_0$ we get for all $k \geq k_0$:
  \begin{align}
    \label{eq:Elam**bi}
    +\infty > E[u_\lambda (\cdot, T)] \geq E[u_\lambda (\cdot,
    t_{n_k})] \geq \int_{B_{R_0}(0)} V(u_\lambda (x, t_{n_k})) \, dx +
    C_N (R_k^N - R_0^N) V_0,
  \end{align}
  for some $C_N > 0$ depending only on $N$. Thus, if
  $v \geq \theta_0$, we would have $R_k \to \infty$ as $k \to \infty$,
  contradicting \eqref{eq:Elam**bi}.

  We just demonstrated that $v(x) \to 0$ as $|x| \to \infty$. Then,
  passing to the limit in \eqref{eq:Elam*bi} as $k \to \infty$, we get
  \begin{align}
    \label{eq:Elam***bi}
    +\infty >  E[u_\lambda (\cdot, T)] \geq
    \int_{B_R(0)} \left( \frac12 | \nabla v|^2 + V(v) \right) dx, 
  \end{align}
  for all $R > R_0$ and $t > 0$. Then, sending $R \to \infty$, by
  Lebesgue dominated convergence theorem and monotonicity of
  $E[u_\lambda (\cdot, t)]$ we obtain that
  $ E[u (\cdot, t)] \geq E[v]$.  On the other hand, since
  $V(v(x)) > 0$ for all $|x| > R_0$, this implies that
  $V(v) \in L^1(\mathbb R^N)$ and $|\nabla v| \in L^2(\mathbb
  R^N)$. Thus, $v$ is a ground state.

  By the arguments above, every limit of $u_\lambda(\cdot, t_{n_k})$
  is a ground state independently of the choice of $n_k$. Therefore,
  from Theorem \ref{t:omls} we get $I \subseteq \Upsilon$. On the
  other hand, by (TD) this means that $a = b \in (0, 1)$. In
  particular, the limit $v$ is independent of $n_k$. Thus,
  $u_\lambda(\cdot, t_n) \to v$ as $n \to \infty$, and by Proposition
  \ref{holder} we have $u_\lambda(\cdot, t) \to v$ uniformly as
  $t \to \infty$. By standard parabolic regularity, this convergence
  is also in $C^1(\mathbb R^N)$.

  Finally, we claim that $\Sigma_*$ consists of only a single point,
  i.e., that $\lambda_*^- = \lambda_*^+ =: \lambda_*$. We argue by
  contradiction. Suppose, to the contrary, that
  $\lambda_*^- < \lambda_*^+$. Since $E[u_\lambda (\cdot, t)] \geq 0$
  for all $t > 0$ and all $\lambda \in \Sigma_*$, there exists a
  sequence $n_k \in \mathbb N$, $n_k \to \infty$ as $k \to \infty$,
  and two sequences, $t^-_{n_k} \in [n_k, n_k + 1)$ and
  $t^+_{n_k} \in [n_k, n_k + 1)$, such that
  $u_{\lambda_*^-} (\cdot, t^-_{n_k}) \to v^-$ (resp.
  $u_{\lambda_*^+} (\cdot, t^+_{n_k}) \to v^+$) in
  $C^1_{loc}(\mathbb R^N)$ as $k \to \infty$, where
  $v^-(x) = v_{\mu^-}(|x|)$ (resp. $v^+(x) = v_{\mu^+}(|x|)$) and
  $v_{\mu^-}$ (resp. $v_{\mu^+}$) solve \eqref{eq:vmu} for some
  $\mu^- \in [\theta_0, 1)$ (resp.  $\mu^+ \in [\theta_0, 1)$; cf. the
  arguments in the proof of Theorem \ref{t:omls}). Furthermore, by
  comparison principle we have $\mu^- \leq \mu^+$. Then, by
  Proposition \ref{holder} and standard parabolic regularity we also
  have $u_{\lambda_*^-} (\cdot, t_{n_k}) \to v^-$ (resp.
  $u_{\lambda_*^+} (\cdot, t_{n_k}) \to v^+$) in $C^1_{loc}(\mathbb R^N)$
  for $t_{n_k} = n_k + 2$, as $k \to \infty$.  Therefore, since
  $v^- \leq v^+$ and both $v^-$ and $v^+$ are ground states, by
  Corollary \ref{c:nomonot} we have $v^- = v^+ =: v_*$.

  Let us show that this gives rise to a contradiction. Let
  $w(x, t) := u_{\lambda_*^+} (x, t) - u_{\lambda_*^-} (x, t) > 0$ for
  all $x \in \mathbb R^N$ and $t > 0$. By \eqref{main}, $w(x, t)$
  solves
  \begin{align}
    \label{eq:w*}
    w_t = \Delta w + f'(\tilde u) w,
  \end{align}
  for some
  $u_{\lambda_*^-} (x, t) < \tilde u(x, t) < u_{\lambda_*^+} (x,
  t)$.
  On the other hand, since both $u_{\lambda_*^-} (x, t)$ and
  $u_{\lambda_*^+} (x, t)$ converge uniformly to $v_*$, we can use the
  arguments leading to \eqref{wunderR} in the proof of Lemma
  \ref{l:unst} to show that $w(x, t) > \eps \phi_0^R(x)$, where
  $\phi_0^R$ is as in \eqref{phiR0}, for all $x \in \mathbb R^N$ and
  $t > 0$ sufficiently large, contradicting the fact that
  $\| w (\cdot, t) \|_{L^\infty(\mathbb R^N)} \to 0$ as
  $t \to \infty$.

  It remains to establish the asymptotic behavior of the energy as
  $t \to \infty$. If $\lambda \in \Sigma_1$, then the statement is
  contained in Theorem \ref{t:omls}. On the other hand, if
  $\lambda \in \Sigma_0$, then we know that
  $u_\lambda(\cdot, t) \to 0$ in $L^2(\mathbb R^N)$ (use the solution
  of the heat equation as a supersolution for large $t$). Hence by
  \eqref{eq:L2H2} we also have $u_\lambda(\cdot, t + 1) \to 0$ in
  $H^1(\mathbb R^N)$, which implies that
  $E[u_\lambda(\cdot, t)] \to 0$ as $t \to \infty$ in this
  case. Finally, if $\lambda \in \Sigma_*$, then
  $u_\lambda(\cdot, t) \to v_*$ for some ground state $v_*$ and
  $E[u_\lambda(\cdot, t)] \geq E[v_*] > 0$ by \eqref{eq:Elam***bi} and
  Lemma \ref{l:vpos}.
\end{proof}

\section{Ignition nonlinearities: Proof of Theorems \ref{t:thrign2}
  and \ref{t:thrign3}}
\label{s:igni}

\begin{proof}[Proof of Theorem \ref{t:thrign2}]
  As in the proof of Theorem \ref{t:thrbist}, we can define the sets
  $\Sigma_0$, $\Sigma_1$ and $\Sigma_*$, and by the same argument we
  have $\Sigma_1 = (\lambda_*^+, \lambda^+]$ for some
  $\lambda_*^+\in (0, \lambda^+)$. Similarly, every solution in
  $\Sigma_0$ satisfies the linear heat equation for all sufficiently
  large $t$ and, therefore, we have $\Sigma_0 = [0, \lambda_*^+)$ for
  some $\lambda_*^-\in (0, \lambda^+)$.  Thus
  $\Sigma_* = [\lambda_*^-, \lambda_*^+] \not= \varnothing$. 

  Let now $\lambda \in \Sigma_*$, and notice that since
  $\lambda \not\in \Sigma_0$, by comparison principle we have
  $u_\lambda(0, t_{n_k}) > \theta_0$ for all $k$.  Then by the same
  arguments as in the proof of Theorem \ref{t:thrbist}, there exists a
  sequence of $t_n \in [n, n+1)$ and a sequence of $n_k \to \infty$
  such that $u_\lambda(\cdot, t_{n_k}) \to v$ as $k \to \infty$ for
  some $v(x) = v_\mu(|x|)$, where $v_\mu$ solves \eqref{eq:vmu} with
  $\mu = v(0) \geq \theta_0$. By Lemma \ref{l:gsnon}, if $N \leq 2$
  then $\mu = \theta_0$. Hence, in view of the uniqueness of the limit
  $v$ independently of $n_k$, we have
  $u_\lambda(\cdot, t) \to \theta_0$ locally uniformly as
  $t \to \infty$. The bound on the energy is contained in Theorem
  \ref{t:omls}.

  The proof is completed by showing that $\lambda_*^- = \lambda_*^+$,
  which can be done as in the proof of \cite[Theorem
  9]{mz:nodea13}. The latter relies on \cite[Lemma 4]{zlatos06}, which
  is valid in $\mathbb R^N$ for all $N \geq 1$.
\end{proof}

\begin{proof}[Proof of Theorem \ref{t:thrign3}]
  The proof proceeds as that of Theorem \ref{t:thrign2} up to the
  point when $u_\lambda(\cdot, t_{n_k}) \to v$ for
  $\lambda \in \Sigma_*$. However, in contrast to lower dimensions,
  for $N \geq 3$ there exist many solutions of \eqref{eq:vmu},
  including a continuous family of non-constant solutions with
  $v^\infty := v_\mu(\infty) \in [0, \theta_0)$
  \cite{BL1983,berestycki81}. In particular, there is at least one
  ground state \cite{BL1983}.

  We claim that $v$ is a ground state. Indeed, a priori we have
  $v^\infty \in [0, \theta_0]$. Consider first the case
  $v^\infty \in (0, \theta_0)$. Since
  $u_\lambda(\cdot, t_{n_k}) \not\to 0$, we have
  $u_\lambda(0, t_{n_k}) > \theta_0$ for all $k$. Therefore, there
  exists a unique $R_k > 0$ such that
  $u_\lambda(x, t_{n_k}) = v^\infty$ for $|x| = R_k$. Also, there
  exists a unique $R_0 > 0$ which solves
  $v^\infty < v_\mu(R_0) < \theta_0$.  Since
  $u_\lambda (\cdot, t_{n_k}) \to v$ in $C^1(B_{R_0}(0))$, by (SD) we
  also have $u_\lambda (\cdot, t_{n_k}) < \theta_0$ for all
  $|x| > R_0$ and all $k \geq k_0$, with some $k_0 \in \mathbb N$
  large enough, and, hence, for all $k \geq k_0$ we have $R_k > R_0$
  and $V(u_\lambda (x, t_{n_k})) = 0$ for all $|x| > R_0$.

  By \eqref{eq:dEdt}, for any $T \in (0, 1)$, $R > R_0$ and
  $k \geq k_0$ we have
    \begin{multline}
    \label{eq:Elam*igni}
    +\infty > E[u_\lambda (\cdot, T)] \geq E[u_\lambda (\cdot,
    t_{n_k})] = \int_{B_R(0)} \left( \frac12 | \nabla u_\lambda (x,
      t_{n_k})|^2 +
      V(u_\lambda (x, t_{n_k})) \right) dx \\
    + \frac12 \int_{\mathbb R^N \backslash B_R(0)} | \nabla u_\lambda
    (x, t_{n_k})|^2 dx . \qquad
  \end{multline}
  On the other hand, by \cite[Radial Lemma A.III]{BL1983} we can
  estimate the right-hand side of \eqref{eq:Elam*igni} from below as
  \begin{align}
    \label{eq:Elam**igni}
    +\infty >  E[u_\lambda (\cdot, T)] \geq \int_{B_{R_0}(0)}
    V(u_\lambda (x, t_{n_k})) \, dx + C_N  R_k^{N-2} |v^\infty|^2,
  \end{align}
  for all $k \geq k_0$, where $C_N > 0$ depends only on
  $N$. Therefore, if $v^\infty \in (0, \theta_0)$, we would have
  $R_k \to \infty$ as $k \to \infty$, contradicting
  \eqref{eq:Elam**igni}.

  Thus, we have either $v^\infty = 0$ or $v^\infty = \theta_0$. Let us
  consider the first case.  Passing to the limit in
  \eqref{eq:Elam*igni} as $k \to \infty$, we get
  \begin{align}
    \label{eq:Elam***igni}
    +\infty > E[u_\lambda (\cdot, T)] \geq
    \int_{B_R(0)} \left( \frac12 | \nabla v|^2 + V(v) \right) dx, 
  \end{align}
  for all $R \geq R_0$. Then, sending $R \to \infty$, by Lebesgue
  dominated convergence theorem we obtain that
  $ E[u (\cdot, T)] \geq E[v]$.  On the other hand, since
  $V(v(x)) = 0$ for all $|x| > R_0$, this implies that
  $V(v) \in L^1(\mathbb R^N)$ and $|\nabla v| \in L^2(\mathbb R^N)$.
  Thus, $v$ is a ground state. Furthermore, we claim that if $v'$ is a
  limit of $u_\lambda (\cdot, t_{n'_k})$ for another choice $t_{n'_k}$
  of a subsequence of $t_n$, then $v' = v$. Indeed, since
  $v_\mu(R_1) = \theta_0$ for some $R_1 > 0$, by continuous dependence
  of solutions of \eqref{eq:vmu} on $\mu$ we have
  $v_{\mu'}(\infty) = 0$ as well for all $\mu'$ in some small
  neighborhood of $\mu$, whenever $v_{\mu'}$ exists. Therefore, by
  (TD) the set $\Upsilon \cup \{v'(0) \}$ is disconnected,
  contradicting part 2 of the statement of Theorem \ref{t:omls},
  unless $v' = v$.

  Consider now the case $v^\infty = \theta_0$. Define $R_k > 0$ to be
  such that $u(x, t_{n_k}) = \theta_0 / 2$ for $|x| = R_k$, and
  observe that $R_k \to \infty$ as $k \to \infty$. Arguing as in the
  preceding paragraph, we have
    \begin{multline}
    \label{eq:Elam*igni2}
    +\infty > E[u_\lambda (\cdot, T)] \geq E[u_\lambda (\cdot,
    t_{n_k})] \geq \int_{B_{R_k}(0)} \left( \frac12 | \nabla u_\lambda
      (x, t_{n_k})|^2 +
      V(u_\lambda (x, t_{n_k})) \right) dx \\
    + C_N R_k^{N-2} |\theta_0|^2. \qquad
  \end{multline}
  Therefore, for every $M > 0$ there exists $k_0 \in \mathbb N$ and
  $R_0 := R_{k_0}$ such that $R_k \geq R_0$ for all $k \geq k_0$ and
   \begin{align}
    \label{eq:Elam**igni2}
     E_0[u_\lambda(x, t_{n_{k_0}})] := \int_{B_{R_0}(0)} \left( \frac12 |
     \nabla u_\lambda (x, t_{n_{k_0}})|^2 + 
     V(u_\lambda (x, t_{n_{k_0}})) \right) dx  \leq -M.
  \end{align}
  We now take $\underline u_\lambda(x, t)$ to be the solution of
  \eqref{main} on $B_{R_0}(0)$ for $t > t_{n_{k_0}}$ with
  $\underline u_\lambda(x, t_{n_{k_0}}) = u_\lambda(x, t_{n_{k_0}})$
  for all $x \in B_{R_0}(0)$ and
  $\underline u_\lambda(x, t) = \theta_0/2$ for all
  $x \in \partial B_{R_0}(0)$ and $t > t_{n_{k_0}}$. Possibly
  increasing the value of $k_0$, we also have that
  $u_\lambda(x, t) > \theta_0/2$ for all $x \in \partial B_{R_0}(0)$
  and $t > t_{n_{k_0}}$. Indeed, if not, then there is a sequence of
  $t_n' \to \infty$ such that $u_\lambda(x, t'_n) \leq \theta_0$ for
  all $|x| \geq R_0$ as $n \to \infty$. However, by the preceding
  arguments this would imply that $u_\lambda(\cdot, t'_n)$ converges
  to a ground state in $C^1_{loc}(\mathbb R^N)$ as $n \to \infty$,
  which contradicts our assumption of $v^\infty = \theta_0$. Thus,
  $\underline u_\lambda(x, t)$ is a subsolution for $u_\lambda(x, t)$
  for all $x \in B_{R_0}(0)$ and $t > t_{n_{k_0}}$, and by comparison
  principle we have $\underline u_\lambda(x, t) < u_\lambda(x, t)$ in
  $B_{R_0}(0)$. In addition, from the gradient flow structure of
  \eqref{main} on $B_{R_0}(0)$ we have
  $E_0[\underline u_\lambda(\cdot, t_{n_k})] \leq E_0[\underline
  u_\lambda(\cdot, t_{n_{k_0}})] = E_0[u_\lambda(\cdot, t_{n_{k_0}})]$
  for all $k \geq k_0$.  Therefore, since
  $u_\lambda(\cdot, t_{n_k}) \to v$ in $C^1(B_{R_0}(0))$ as
  $k \to \infty$ and $V(u)$ is a non-increasing function of $u$, by
  \eqref{eq:Elam**igni2} we conclude that
  \begin{align}
    -M \geq \lim_{k \to \infty} E_0[\underline u_\lambda (\cdot,
    t_{n_k})] \geq \lim_{k \to \infty} \int_{B_{R_0}(0)} V(u_\lambda (x,
    t_{n_k})) \, dx \notag \\
    =  \int_{B_{R_0}(0)} V(v) \, dx 
    \geq - \| V(v) \|_{L^1(\mathbb
    R^N)}.
  \end{align}
  However, by (V) this is a contradiction when $M$ is sufficiently
  large. Thus, $v^\infty = \theta_0$ is impossible.

  We thus established that $v$ is a ground state and that $v$ is the
  full limit of $u_\lambda(\cdot, t)$ as $t \to \infty$ for any
  $\lambda_* \in \Sigma_*$. The remainder of the proof follows as in
  the proof of Theorem \ref{t:thrbist}.
\end{proof}

\section{Monostable nonlinearities: Proof of Theorems \ref{t:mono2},
  \ref{t:mono30} and \ref{t:mono31}}
\label{s:mono}

In view of the hair-trigger effect for $f'(0) > 0$ \cite{aronson78},
in which case the statements of all the theorems trivially holds true
with $\lambda_* = 0$, it is sufficient to assume $f'(0) = 0$ in all
the proofs.

\begin{proof}[Proof of Theorem  \ref{t:mono2} and Theorem \ref{t:mono30}]
  Once again, we define the sets $\Sigma_1$, $\Sigma_0$ and $\Sigma_*$
  and note that by the same argument as in the proofs of the preceding
  theorems we have $\Sigma_1 = (\lambda_*^+, \lambda_+]$ for some
  $\lambda_*^+ \in (0, \lambda_+)$. At the same time, by Lemma
  \ref{l:gsnon} there are no positive solutions of \eqref{eq:vmu} for
  any $\mu \in (0,1)$ and $N \leq 2$. Similarly, by the assumption of
  Theorem \ref{t:mono30} there are no positive solutions of
  \eqref{eq:vmu} for any $\mu \in (0,1)$ and $N \geq 3$. Thus,
  $\Sigma_* = \varnothing$, and we have $\Sigma_0 = [0, \lambda_*^+]$.
\end{proof}

\begin{proof}[Proof of Theorem \ref{t:mono31}]
  The proof proceeds in the same fashion as before, establishing that
  $\Sigma_1 = (\lambda_*^+, \lambda_+]$ for some
  $\lambda_*^+ \in (0, \lambda_+)$. If $\lambda_*^+ = 0$, we are
  done. Otherwise, suppose that $\lambda_*^+ > 0$. If there exists
  $\lambda \in (0, \lambda_*^+]$ such that $u_\lambda(\cdot, t) \to 0$
  uniformly as $t \to \infty$, then by comparison principle
  $[0, \lambda] \subseteq \Sigma_0$. Let
  $\lambda_*^- \leq \lambda_*^+$ be the supremum of all such values of
  $\lambda$. Then either $\Sigma_0 = [0, \lambda_*^-)$ or
  $\Sigma_0 = [0, \lambda_*^-]$. In the second case and with
  $\lambda_*^- = \lambda_*^+$ we are done once again. Otherwise
  $\Sigma_* \not= \varnothing$.  Finally, by our assumptions and the
  arguments in the proofs of the preceding theorems, for every
  $\lambda \in \Sigma_*$ we have $u_\lambda(\cdot, t) \to v$ uniformly
  as $t \to \infty$, where $v$ is a ground state. Then by the
  arguments in the proof of Theorem \ref{t:thrbist} we have
  $\lambda_*^- = \lambda_*^+$.
\end{proof}

\section*{Acknowledgements}

This work was supported, in part, by NSF via grants DMS-0908279,
DMS-1119724 and DMS-1313687. CBM wishes to express his gratitude to
V. Moroz for many valuable discussions.

\bibliography{references2,../mura,../bio,../nonlin}
\bibliographystyle{plain}

\end{document}